\setlist[enumerate,1]{label=\textup{(\arabic*)}}
\renewcommand*{\PrintDOI}[1]{\href{http://dx.doi.org/\detokenize{#1}}{doi: \detokenize{#1}}}
\newcommand{\comment}[1]{}  
\theoremstyle{plain}
\newtheorem{theorem}{Theorem}[section]
\newtheorem{lemma}[theorem]{Lemma}
\newtheorem{corollary}[theorem]{Corollary}
\newtheorem{proposition}[theorem]{Proposition}
\theoremstyle{remark}
\newtheorem{remark}[theorem]{Remark}
\theoremstyle{definition}
\newtheorem{definition}[theorem]{Definition}
\newtheorem{example}[theorem]{Example}
\numberwithin{theorem}{section}
\newcommand\C{\mathbb C}
\newcommand\N{\mathbb N}
\newcommand\Q{\mathbb Q}
\newcommand\R{\mathbb R}
\newcommand\Z{\mathbb Z}
\newcommand{\comb}{\overbracket[.7pt][1.4pt]}
\newcommand*{\Fil}{\mathcal F}
\newcommand{\defeq}{\mathrel{:=}} 
\newcommand*{\into}{\rightarrowtail}
\newcommand*{\onto}{\twoheadrightarrow}
\newcommand*{\ling}[1]{#1_\mathrm{lg}}
\newcommand{\wotimes}{\widehat{\otimes}}
\newcommand{\ootimes}{\overline{\otimes}}
\DeclarePairedDelimiter{\abs}{\lvert}{\rvert}
\DeclarePairedDelimiter{\norm}{\lVert}{\rVert}
\DeclarePairedDelimiter{\gen}{\langle}{\rangle}
\DeclarePairedDelimiter{\ceil}{\lceil}{\rceil}
\DeclarePairedDelimiterX{\setgiven}[2]{\{}{\}}{#1\,{:}\,\mathopen{}#2}
\DeclareMathOperator{\coker}{coker}
\DeclareMathOperator{\Spec}{Spec}
\DeclareMathOperator{\colim}{colim}
\DeclareMathOperator{\coim}{coim}
\DeclareMathOperator{\im}{im}
\newcommand{\sslash}{\mathbin{/\mkern-6mu/}}
\newcommand{\fC}{\mathsf{C}}
\newcommand{\fM}{\mathsf{M}}
\newcommand{\Ind}{\mathsf{Ind}}
\newcommand{\fD}{\mathsf{D}}
\newcommand{\op}{\mathrm{op}}
\newcommand{\id}{\mathrm{id}}
\newcommand{\dvr}{V}
\newcommand{\dvgen}{\pi}
\newcommand{\dvf}{F}
\DeclareMathOperator{\Hom}{Hom}
\DeclareMathOperator{\HP}{HP}
\DeclareMathOperator{\HH}{HH}
\begin{document}
\title{Analytification, localization and homotopy epimorphisms}
\ \

\author{Oren Ben-Bassat}

\address{Oren Ben-Bassat\\ Department of Mathematics\\
  University of Haifa\\
  Haifa, 3498838\\
  Israel}

\email{ben-bassat@math.haifa.ac.il}

\author{Devarshi Mukherjee}

\address{Devarshi Mukherjee \\ Mathematisches Institut\\
  Georg-August Universit\"at G\"ottingen\\
  Bunsenstra\ss{}e 3--5\\
  37073 G\"ottingen\\
  Germany}

\email{devarshi.mukherjee@mathematik.uni-goettingen.de}

\begin{abstract}
We study the interaction between various analytification functors, and a class of morphisms of rings, called homotopy epimorphisms. An analytification functor assigns to a simplicial commutative algebra over a ring R, along with a choice of Banach structure on R, a commutative monoid in the monoidal model category of simplicial ind-Banach R-modules.  We show that several analytifications relevant to analytic geometry - such as Tate, overconvergent, Stein analytification, and formal completion - are homotopy epimorphisms. Another class of examples of homotopy epimorphisms arises from Weierstrass, Laurent and rational localizations in derived analytic geometry. As applications of this result, we prove that Hochschild homology and the cotangent complex are computable for analytic rings, and the computation relies only on known computations of Hochschild homology for polynomial rings. We show that in various senses, Hochschild homology as we define it commutes with localizations, analytifications and completions. 
\end{abstract}

\thanks{The authors would like to thank Ralf Meyer for interesting discussions in the course of writing this article.}

\maketitle

\tableofcontents

\section{Introduction}

The paradigm of this article is forthcoming work on derived analytic geometry, based on homotopical algebraic geometry by To\"en and Vezzosi. This builds on a series of articles starting with \cite{Ben-Bassat-Kremnizer:Nonarchimedean_analytic}, in which the authors develop various versions of analytic geometry relative to symmetric monoidal categories. Briefly,  given a closed, symmetric monoidal category \(\mathsf{C}\), one can define \textit{affine schemes} abstractly as objects in the opposite category \(\mathsf{Aff}(\fC) \defeq \mathsf{Comm}(\fC)^\op\) of unital, commutative monoids over \(\mathsf{C}\). If \(A \in \mathsf{Comm}(\fC)\) is a commutative monoid, we denote by \(\mathsf{Spec}(A)\) the corresponding object in the opposite category \(\mathsf{Aff}(\fC)\). The category of affine schemes is then equipped with a suitable collection of covers that turns it into a Grothendieck site. By choosing \(\fC = \mathsf{Mod}_\Z\) to be the category of abelian groups and duals \((\Spec(B_i) \to \Spec(A))_i\) of flat morphisms \(A \to B_i\) satisfying \(B_i \otimes_A B_i \cong B_i\) as covers, one can recover the Zariski topology on \(\mathsf{Aff}(\fC)\). A \textit{scheme} is then a set-valued sheaf on the category \(\mathsf{Aff}(C)\) for the Zariski topology, obtained by gluing affine schemes along these covers.

In analytic geometry, one can no longer reasonably use flat epimorphisms as covers for a Grothendieck topology. Indeed, if one works in the category of Fr\'echet spaces  - which is a large enough category containing algebras of complex analytic functions - then for an open subset \(U\) of a Stein space \((X,\mathcal{O}_X)\), \(\mathcal{O}_X(U)\) is usually not topologically flat as a Fr\'echet \(\mathcal{O}_X(X)\)-module (see \cite{aristov2020open}). Furthermore, the category of Fr\'echet spaces is not closed due to a plethora of topologies one can impose on the mapping space between two Fr\'echet spaces. Instead, the authors in \cite{Ben-Bassat-Kremnizer:Nonarchimedean_analytic} propose that the correct category to do relative geometry is the category \(\mathsf{Ind}(\mathsf{Ban}_R)\) of inductive systems of Banach \(R\)-modules over a Banach ring \(R\). Concretely, they equip the subcategory \(\mathsf{Comm}(\mathsf{Ban}_R) \subseteq \mathsf{Comm}(\mathsf{Ind}(\mathsf{Ban}_R))\) with the topology whose covers are defined by duals of \textit{homotopy epimorphisms} with certain properties. These are bounded \(R\)-algebra homomorphisms \(A \to B\) that satisfy \(B \wotimes_A^{\mathbb{L}} B \cong A\), where \(\wotimes_A^{\mathbb{L}}\) is the completed, projective derived tensor product. This yields a Grothendieck topology on \(\mathsf{Comm}(\mathsf{Ind}(\mathsf{Ban}_R))\), which restricts on the category of (dagger) affinoid algebras, and Stein algebras to the weak \(G\)-topology from dagger and Berkovich geometry, and the Stein topology from complex geometry.

This article is organised as follows.

In Section \ref{sec:preliminaries}, we introduce the categorical background in which we do functional analysis. Briefly, we need a generalisation of abelian categories to combine homological algebra and functional analysis. These are called \textit{quasi-abelian categories} due to Schneiders \cite{schneiders1999quasi}, and our main category of interest, namely the category \(\mathsf{Ind}(\mathsf{Ban}_R)\) of inductive systems of Banach modules over a Banach ring \(R\) is a quasi-abelian category. Section \ref{sec:homotopy_epimorphisms} proves basic inheritence properties of homotopy epimorphisms.  In Section \ref{sec:An} we introduce analytic rings of functions $C_n$ over a Banach ring $R$, so that $C_n \wotimes_{R} C_m \cong C_{m+n}$ along with injective bounded maps \begin{equation}\label{mono}R[x_1, \dots, x_n] \to C_n \to R[[x_1, \dots, x_n]].
\end{equation}

The main technical results, namely, Lemmas \ref{lem:poly_diag_strict}, \ref{lem:formal_poly_diag_strict}, \ref{lem:TateDiagStrict}, \ref{lem:daggerdaigstrict}, and \ref{lem:HoloStrict} all have a similar nature. The rings $C_n$ have underlying Ind-Ban $R$-modules which are all flat over $R$ in the sense of Definition \ref{def:flat}. In order to prove that obvious morphisms between different $C_n \to C'_n$ are homotopy epimorphisms, we will need to check that certain complexes  
\[0 \to C_2 \to C_2 \to C_1 \to 0 \]
are strictly exact meaning the first morphism is a strict monomorphism and the second is a strict epimorphism. This sequence writes diagonal functions as one variable in terms of functions of two variables modulo functions vanishing on the diagonal. We call these \textit{diagonal sequences}. In terms of non-commutative geometry in Kontsevich's formulation, these are projective resolutions of $C_1$ as a bimodule over itself showing that $C_1$ is smooth as a differential graded ring in the category of Ind-Ban $R$-modules. In all these cases, the bounded maps $\pi: C_2 \to C_1$ given by $\pi(f)(x)=f(x,x)$ are strict epimorphisms, indeed defining $\iota: C_1 \to C_2$ by $\iota(f)(y,z)=f(y)$, we have that $\pi \circ \iota = \id$, and it only remains to check that $\iota$ is ``bounded" which is obvious in all the examples. The main work is to show that first map given by multiplication by $y-z$, which preserves $C_2 \subset R[[y,z]]$ and is clearly bounded, is in fact strict. The importance of the lemmas which check this strictness condition is Theorem \ref{thm:main_1} which uses these conditions as a criteria for a map to be a homotopy epimorphism. For each of the examples of analytifications we consider except for the Tate algebras, it does not matter if $R$ is non-archimedean or not, and they are all flat as Ind-Banach $R$-modules in the sense of Definition \ref{def:flat} with respect to $\wotimes_{R}$ or $\wotimes^{na}_{R}$. Whenever we consider the ring structure on the Tate algebras, we always assume that $R$ is non-archimedean and then Tate agebras are flat as non-archimedean ind-Banach $R$-modules in the sense of Definition \ref{def:flat} with respect to $\wotimes^{na}_{R}$. For arbitrary Banach rings $R$ we sometimes do use the Tate algebras over $R$ even when $R$ is not non-archimedean but in this case please note that we are treating the Tate algebra only as an ind-Banach $R$-module and not as a ring. In Theorem \ref{thm:polynomial_dagger_isocohomological2} we prove that the maps in Equation \ref{mono} and others are homotopy epimorphisms.

Away from the `discrete' setting, the second motivation to study homotopy epimorphisms comes from derived analytic geometry. The approach adopted here uses HAG contexts, appearing in \cite{toen2008homotopical}. Briefly, a HAG context \(\mathsf{M}\) is a monoidal model category whose model structure induces one on the category of commutative monoids in \(\mathsf{M}\). The relevant model category that will be used in \cite{Ben-Bassat-Kelly-Kremnizer:Perspective} is the category \(\mathsf{s}\mathsf{Ind}(\mathsf{Ban}_R)\) with a certain projective model structure. To build derived analytic spaces, one imposes a homotopy Grothendieck topology on the opposite category of commutative monoids on \(\mathsf{M}\) and proceeds precisely as in the underived situation. Here again, the covers for the topology involve homotopy epimorphisms for simplicial commutative algebra objects in \(\mathsf{Ind}(\mathsf{Ban}_R)\). In Section \ref{sec:derived_localisation} we define, for each type of analytification $C_n$ of the functions on $n$-dimensional affine space, three types of derived localization: Weierstrass \ref{def:W}, Laurent \ref{defn:Laurent}, and rational \ref{defn:rational}.  We prove in Section \ref{sec:derived_localisation} that derived localizations are homotopy epimorphisms. In usual (underived) rigid geomertry, these are precisely the covers that lead to rigid analytic spaces.  

The next question entails the passage from the category of simplicial commutative \(R\)-algebras to \(\mathsf{Comm}(\mathsf{s}\mathsf{Ind}(\mathsf{Ban}_R))\). There are several ways this can be done, depending on the Banach structure on $R$ and on how precisely we choose to restrict the algebraic affine space \(\mathbb{A}_{R}^n = \Spec(R[t_1,\dots,t_n])\) to domains of convergence of analytic functions. We call each of these \textit{analytification functors}; they play a role similar to complex analytification (resp. rigid analytification) in complex (resp. non-archimedan) geometry. While we defer most of the properties of these functors to \cite{Ben-Bassat-Kelly-Kremnizer:Perspective}, we prove in Section \ref{sec:Analytification_hepi} that each of the analytifications we consider is a homotopy epimorphism.

Our final application arises from Hochschild homology computations. In general, the Hochschild homology of an arbitrary topological algebra is very hard to compute.  Here again, homotopy epimorphisms provide an interesting and large class of examples, whereby the Hochschild homology of analytifications can be computed from the Hochschild homology of polynomial-type algebras. This strategy has previous been observed in non-commutative geometry by Meyer (\cite{meyer2004embeddings}) and Connes (\cite{connes1985non}), who compute the Hochschild homologies of group convolution algebras and the algebra of smooth functions on the noncommutative \(2\)-torus, using a computable dense subalgebra that is `isocohomologically' embedded. Similar observations have arisen in the work of Taylor (\cite{taylor1972general}) and Pirkovskii (\cite{aristov2020open}). The main observation of Section \ref{sec:Hochschild} is that if \(A \to B\) is a homotopy epimorphism of simplicial commutative algebras in \(\mathsf{Ind}(\mathsf{Ban}_R)\), then \(\mathbb{HH}(B) \cong B \wotimes_{A}^{\mathbb{L}} \mathbb{HH}(A)\). This makes the Hochschild homology of a (derived) analytic ring \(B\) computable, when \(A\) is, for instance, the (derived) quotient of a polynomial ring. There is vast literature available in the computation of Hochschild homology for purely algebraic objects, so our results allow this to be extended to the analytic realm.

\section{Preliminaries}\label{sec:preliminaries}

\subsection{Homological algebra and functional analysis}

In this section, we recall some background material on \textit{quasi-abelian categories} developed by Schneiders \cite{schneiders1999quasi}. We require this generality in order to do homological algebra in functional analytic categories, which are seldom abelian. 

\begin{definition}\label{def:strict}
Let \(\fC\) be a category with kernels and cokernels. A morphism $f:V \to W$ is called \textit{strict} if the natural morphism from $\coker(\ker(f) \to V)$ to $\ker(W \to \coker(f))$ is an isomorphism. Denote by \(\coim(f) = \coker(\ker(f) \to V)\), and by \(\im(f) = \ker(\coker(f) \to V)\).
\end{definition}

\begin{definition}\label{def:quasi-abelian}
Let \(\fC\) be an additive category with kernels and cokernels. Then \(\fC\) is said to be \textit{quasi-abelian} if 

\begin{itemize}
\item the pushout of a strict monomorphism by an arbitrary morphism in \(\fC\) is a strict monomorphism;
\item the pullback of a strict epimorphism by an arbitrary morphism in \(\fC\) is a strict epimorphism.
\end{itemize}
 
\end{definition}

\begin{example}
Every morphism in an abelian category is strict. Consequently, every abelian category is quasi-abelian. In particular, for any ring \(R\), the category \(\mathsf{Mod}_R\) of left or right \(R\)-modules is an abelian category.  
\end{example}

\begin{example}
Let \(\mathsf{Fr}_\C\) be the category of Fr\'echet spaces over \(\C\). By the Closed Graph Theorem, a continuous linear map is a strict monomorphism if and only if it is injective and has closed range.  Dually, again by the closed graph theorem, a morphism is a strict epimorphism if and only if it is surjective. It is easy to check that strict monomorphisms (resp. strict epimorphisms) are stable under pushouts (resp. pullbacks). Finally, the category \(\mathsf{Ban}_{\C}\) of Banach spaces over \(\C\) is a full subcategory of \(\mathsf{Fr}_\C\) that is closed under extensions (that is, kernel-cokernel pairs). Therefore, \(\mathsf{Ban}_{\C}\) is a quasi-abelian category as well. 
\end{example}

There are several other examples of quasi-abelian categories that are relevant to analytic geometry, generalising the example above. But we defer a discussion about this to the next subsection \ref{subsection:borno_geo}. We will actually need a category that contains all filtered colimits - rather than just finite colimits as in the category \(\mathsf{Ban}_{\C}\). The universal way of doing this is as follows:

\begin{definition}\label{def:ind-completion} 
Let $\fC$ be a category. An \textit{ind-completion} of $\fC$ is a category $\fD$ with a functor 
$i:\fC \to \fD$, such that $D$ is closed under filtered colimits, and the functor $i$ is initial with respect to functors into categories closed under filtered colimits.
\end{definition}

\begin{lemma}\label{lem:ind-completion_exists}
Let $\fC$ be a category. Its ind-completion exists and can be realized as the full subcategory of the category $\Pr(\fC)=\mathsf{Fun}(\fC^{op},\mathsf{Set})$, whose objects are filtered colimits of representable functors (note that the category of presheaves is cocomplete). 
\end{lemma}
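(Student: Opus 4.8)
The plan is to take $\fD$ to be the full subcategory of $\Pr(\fC) = \mathsf{Fun}(\fC^{\op},\mathsf{Set})$ spanned by those presheaves that arise as filtered colimits of representable presheaves, and to take $i \colon \fC \to \fD$ to be the corestriction of the Yoneda embedding $h \colon \fC \to \Pr(\fC)$ (which does land in $\fD$, since every representable is trivially a filtered colimit of representables: index it by a category with a terminal object). The Yoneda lemma makes $i$ fully faithful, so what remains is to verify (a) that $\fD$ is closed under filtered colimits, and (b) that $i$ has the required universal property: for every category $\mathsf{E}$ closed under filtered colimits and every functor $F \colon \fC \to \mathsf{E}$, there is a filtered-colimit-preserving functor $\tilde F \colon \fD \to \mathsf{E}$ with $\tilde F \circ i \cong F$, unique up to canonical isomorphism (this is the classical free-cocompletion-under-filtered-colimits argument, which I would reproduce).

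The technical backbone will be the category of elements. For $X \in \Pr(\fC)$ let $\int_\fC X$ denote the category whose objects are pairs $(c,x)$ with $c \in \fC$ and $x \in X(c)$, and whose morphisms $(c,x) \to (c',x')$ are arrows $u \colon c \to c'$ with $X(u)(x') = x$. First I would recall the co-Yoneda (density) isomorphism $X \cong \colim_{(c,x) \in \int_\fC X} h_c$, valid for every $X$. Since $\int_\fC h_c$ has the terminal object $(c,\id_c)$, and a category with a terminal object is filtered, the density formula shows each representable is a filtered colimit of representables; conversely, using that colimits of presheaves are computed pointwise and correspond to the evident colimit on categories of elements, together with the fact that a filtered colimit of filtered categories is again filtered, one obtains the clean criterion: $X \in \fD$ if and only if $\int_\fC X$ is filtered. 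Along the way this recovers the usual presentation $\Hom_\fD(\colim_a h_{c_a}, \colim_b h_{d_b}) \cong \lim_a \colim_b \Hom_\fC(c_a, d_b)$, identifying $\fD$ with the category of ind-objects of $\fC$.

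Closure of $\fD$ under filtered colimits (step (a)) then follows quickly: if $(X_j)_{j \in J}$ is a filtered diagram in $\fD$ and $X$ its colimit taken in $\Pr(\fC)$, then $\int_\fC X \cong \colim_{j \in J} \int_\fC X_j$ is a filtered colimit of filtered categories, hence filtered, so $X \in \fD$; thus $\fD$ has filtered colimits and the inclusion $\fD \injto \Pr(\fC)$ preserves them. For the universal property (step (b)), given $F \colon \fC \to \mathsf{E}$ I would define $\tilde F(X) \defeq \colim_{(c,x) \in \int_\fC X} F(c)$, which exists because $\int_\fC X$ is filtered and $\mathsf{E}$ has filtered colimits; it is functorial in $X$ because a morphism $X \to Y$ induces a functor $\int_\fC X \to \int_\fC Y$, and $\tilde F \circ i \cong F$ because $(c,\id_c)$ is terminal in $\int_\fC h_c$. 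That $\tilde F$ preserves filtered colimits uses the identity $\int_\fC(\colim_j X_j) \cong \colim_j \int_\fC X_j$ and a Fubini/cofinality argument for iterated filtered colimits in $\mathsf{E}$. Uniqueness is forced: any filtered-cocontinuous $G$ with $G \circ i \cong F$ satisfies $G(X) \cong G\big(\colim_{(c,x)} h_c\big) \cong \colim_{(c,x)} G(h_c) \cong \colim_{(c,x)} F(c) = \tilde F(X)$, naturally in $X$, which also pins down the comparison $2$-cell uniquely.

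The main obstacle, and the only genuinely substantive point, is the purely category-theoretic input that a filtered colimit of filtered categories is filtered — equivalently, that filtered colimits commute with filtered colimits — together with its consequence that $\int_\fC(-)$ carries filtered colimits of presheaves to filtered categories compatibly with the pointwise formula; everything else is bookkeeping. One should also be mindful of size: if $\fC$ is not small, $\Pr(\fC)$ is only well-defined relative to a choice of universe, so the construction should be carried out with $\mathsf{Set}$ interpreted in a sufficiently large universe.
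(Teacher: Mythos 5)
The paper does not actually supply a proof of this lemma; it is stated as a known fact and the reader is implicitly referred to the literature (the standard treatments are Kashiwara--Schapira, \emph{Categories and Sheaves}, Chapter 6, and SGA~4, Expos\'e~I). Your argument is correct and is precisely that standard route: corestrict Yoneda, identify $\Ind(\fC)$ with the presheaves whose category of elements $\int_\fC X$ is filtered, deduce closure under filtered colimits, and obtain the universal property by left Kan extension along the category of elements, with uniqueness forced by the density formula. Your remark about universes/size is also the right caveat, since the lemma as stated in the paper does not require $\fC$ small. The one point I would tighten is the final sentence: the phrase ``equivalently, that filtered colimits commute with filtered colimits'' does not say what you want --- colimits commute with colimits unconditionally, so that statement is vacuous and is not equivalent to ``a filtered colimit of filtered categories is filtered.'' Moreover the latter claim, taken as a statement about $1$-categorical colimits in $\mathsf{Cat}$, needs some care because general colimits in $\mathsf{Cat}$ adjoin formal composites. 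The cleaner thing to do, and what the cited references do, is verify directly that $\int_\fC X$ satisfies the three filteredness axioms when $X = \colim_{j\in J} h_{c_j}$ with $J$ filtered, using the explicit description of filtered colimits in $\mathsf{Set}$: every $x\in X(c)=\colim_j \Hom(c,c_j)$ is represented by some $u\colon c\to c_j$, and two representatives that agree in the colimit agree after pushing forward along some $j\to j''$. That concrete argument is exactly the content of the ``purely category-theoretic input'' you gesture at, and it sidesteps the question of how colimits of categories behave in general.
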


We will denote the ind-completion of $\fC$ by $\Ind(\fC)$.  
Given two presentations of objects $E \cong \underset{i\in I}\colim E_i$ and $F \cong \underset{j \in J}\colim F_j$, we have a canonical isomorphism
\[\Hom(E, F) \cong
\lim_{i\in I}\underset{j \in J}\colim\Hom(E_i,F_j).
\]

\begin{remark}\label{rem:DescribeHoms}
A morphism can be represented by a functor  $\alpha: I \to J$ and for each $i \in I$ an element of $\Hom(E_{i}, F_{\alpha(i)})$ giving a natural transformation $E \to F\circ \alpha$.
\end{remark}

The following is Proposition 2.1.19 in \cite{schneiders1999quasi}:

\begin{proposition}\label{prop:ScnMain}
Let $\fC$ be a small, closed, symmetric monoidal, quasi-abelian category. The category $\Ind(\fC)$ has a canonical closed symmetric monoidal structure extending that on $\fC$. Hence, if $\fC$ has enough projectives, $\Ind(\fC)$ is a closed symmetric monoidal elementary quasi-abelian category.
\end{proposition}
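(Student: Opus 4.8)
The plan is to transport the entire structure from $\fC$ along the universal property of the ind-completion (Definition~\ref{def:ind-completion}). Since $\Ind(\fC)$ is the free cocompletion of $\fC$ under filtered colimits, any functor out of $\fC$ — or out of a finite product of copies of $\fC$ — into a category with filtered colimits extends, essentially uniquely, to a functor on $\Ind(\fC)$ (resp.\ on the product of copies of $\Ind(\fC)$) preserving filtered colimits separately in each variable; concretely the extension is evaluated on a presentation $E\cong\colim_{i\in I}E_i$ by applying $\colim_{i\in I}$, and independence of the chosen presentation follows from the Hom-formula recalled after Lemma~\ref{lem:ind-completion_exists}.

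First I would produce the monoidal product: applying the above to $\otimes\colon\fC\times\fC\to\fC\hookrightarrow\Ind(\fC)$ gives a bifunctor on $\Ind(\fC)$, still written $\otimes$, with $(\colim_i E_i)\otimes(\colim_j F_j)\cong\colim_{(i,j)}(E_i\otimes F_j)$ and unit the image of $\mathbf 1_\fC$. Because the embedding $i\colon\fC\to\Ind(\fC)$ sends an object to its constant presentation, $E_i\otimes F_j$ is computed as in $\fC$, so $i$ is strong monoidal — this is the sense in which the new structure extends the old. The associator, unitors and braiding are obtained by applying $\colim$ to the corresponding isomorphisms of $\fC$, and the coherence diagrams (pentagon, triangle, hexagons) hold because each vertex is a filtered colimit over the $E_i$'s, $F_j$'s, $G_k$'s of the already-commuting diagram in $\fC$.

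Next, closedness. As $\fC$ is additive with cokernels it has finite colimits, and being small this makes $\Ind(\fC)$ locally finitely presentable (with the objects of $\fC$ among its compact objects), in particular cocomplete. For fixed $F$ the functor $(-)\otimes F$ preserves filtered colimits by construction and finite colimits because it does in $\fC$ (as part of the closed structure there) and finite colimits commute with filtered ones in $\Ind(\fC)$; hence it is cocontinuous, so by the adjoint functor theorem for locally presentable categories it admits a right adjoint $\HOM(F,-)$. This gives the closed structure directly; alternatively one sets $\HOM(\colim_i E_i,\colim_j G_j)\defeq\lim_i\colim_j\HOM_\fC(E_i,G_j)$ and checks the tensor–hom adjunction from the $\fC$-level adjunction and the Hom-formula, exchanging limits with filtered colimits where allowed, the internal-hom coherences following from those in $\fC$ by the same $\colim$ argument as for the associator.

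It remains to recall that $\Ind(\fC)$ is again quasi-abelian with $i$ exact and fully faithful and filtered colimits exact — the standard ind-completion statement: kernels and cokernels are computed levelwise, a morphism of ind-objects is a strict mono (strict epi) iff it has a levelwise presentation by such, and stability under pushout/pullback descends from $\fC$ level by level since finite limits commute with filtered colimits. Finally, if $\fC$ has enough projectives, fix (using smallness) a set $\mathcal P$ of projectives covering every object of $\fC$ by a strict epimorphism; by the Hom-formula $\Hom_{\Ind(\fC)}(P,-)$ is the filtered-colimit extension of the exact functor $\Hom_\fC(P,-)$, hence preserves filtered colimits and sends strict epimorphisms to epimorphisms, so the images of $\mathcal P$ are compact projective objects that strictly generate $\Ind(\fC)$; together with cocompleteness this is exactly Schneiders' definition of an elementary quasi-abelian category. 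I expect the main obstacle to be the explicit half of closedness: confirming that the prescribed $\HOM(E,G)$ actually lies in $\Ind(\fC)$ (the place where finite completeness of $\fC$ enters) and that the adjunction isomorphism is natural in all variables and compatible with $\otimes$; all the interchange-of-limit-and-filtered-colimit bookkeeping is concentrated there, whereas the monoidal coherence and the elementary-category checks are routine given the Hom-formula. (In the paper this is simply cited as Schneiders' Proposition~2.1.19, where these verifications are carried out.)
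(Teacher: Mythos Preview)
Your proposal is correct and follows the same approach as the paper: both give the explicit formulas $(\colim_i E_i)\ootimes(\colim_j F_j)=\colim_{(i,j)}E_i\ootimes F_j$ and $\underline{\Hom}(\colim_i E_i,\colim_j G_j)=\lim_i\colim_j\underline{\Hom}(E_i,G_j)$, and the paper then simply refers everything else to Schneiders' Proposition~2.1.19, whereas you sketch the verifications (coherence via levelwise commutativity, closedness via the adjoint functor theorem or the explicit adjunction, and the elementary condition via compact projectivity of the images of $\mathcal P$). Your identification of the main technical point---that the cofiltered limit in the internal-Hom formula actually lands in $\Ind(\fC)$ and that the adjunction is natural---is accurate and is exactly what Schneiders checks.
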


\begin{proof}
The tensor product and internal-Hom are simply given by: 
\begin{equation*}\label{eqn:IndRules}
\underset{i\in I}\colim E_i\ootimes \underset{j\in J}\colim F_j= \underset{(i,j) \in I\times J}\colim E_i\ootimes F_j
\end{equation*}
\begin{equation*}
\underline{\Hom}(\underset{i\in I}\colim E_i, \underset{j \in J}\colim F_j)=
\lim_{i\in I} \underset{j \in J}\colim \underline{\Hom}(E_i,F_j).
\end{equation*}
This and the rest of the claims are in \cite{schneiders1999quasi}*{Proposition 2.1.19}. 
\end{proof}



The following lemma is straightforward and the sufficient condition holds in all the quasi-abelian categories of interest to us:

\begin{lemma}\label{lem:filtered_strict}
Suppose \(\fC\) is a cocomplete quasi-abelian category in which filtered colimits commute with kernels. Then a filtered colimit of strict morphisms is strict.
\end{lemma}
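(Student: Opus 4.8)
The plan is to exploit the fact that, under the stated hypotheses, filtered colimits in $\fC$ commute with \emph{both} kernels and cokernels, and then to observe that the coimage--image factorization is functorial, hence preserved by filtered colimits.

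First I would fix a filtered category $I$ and a diagram $i \mapsto f_i$ in the arrow category $\mathsf{Fun}([1],\fC)$, that is, a compatible family of morphisms $f_i \colon V_i \to W_i$ together with their structure maps. Since colimits in $\mathsf{Fun}([1],\fC)$ are computed pointwise, the filtered colimit of this diagram is the morphism $f \defeq \colim_i f_i \colon V \to W$ with $V = \colim_i V_i$ and $W = \colim_i W_i$, and the canonical maps $V_i \to V$, $W_i \to W$ assemble into a morphism of arrows. We must show that $f$ is strict.

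Next I would record the two commutation statements: cokernels, being a particular kind of colimit, always commute with the filtered colimit, and the hypothesis supplies that kernels do as well. Applying this to $f$ (and to the relevant induced maps) yields natural isomorphisms $\ker f \cong \colim_i \ker f_i$ and $\coker f \cong \colim_i \coker f_i$, and then, since $\coim(-) = \coker(\ker(-) \to (-))$ is built from a kernel followed by a cokernel while $\im(-) = \ker((-) \to \coker(-))$ is built from a cokernel followed by a kernel, two further natural isomorphisms
\[
\coim f \;\cong\; \colim_i \coim f_i, \qquad \im f \;\cong\; \colim_i \im f_i .
\]
The point to be careful about here is that $\coim$, $\im$, and the canonical morphism $\coim(-) \to \im(-)$ between them are all functorial in the arrow, so that the displayed isomorphisms are compatible with these canonical maps. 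Concretely, taking the filtered colimit of the canonical factorizations $V_i \to \coim f_i \to \im f_i \to W_i$ produces a factorization of $f$ through objects canonically identified with $\coim f$ and $\im f$, and uniqueness of the coimage--image factorization then forces the induced middle map to be the canonical map $\coim f \to \im f$.

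Finally, under these identifications the canonical morphism $\coim f \to \im f$ is $\colim_i (\coim f_i \to \im f_i)$. For each $i$ the morphism $\coim f_i \to \im f_i$ is an isomorphism because $f_i$ is strict, and a filtered colimit (indeed any colimit) of isomorphisms is an isomorphism. Hence $\coim f \to \im f$ is an isomorphism, i.e.\ $f$ is strict. The only genuinely non-formal step, and the one I would take most care with, is the bookkeeping in the previous paragraph: verifying that the colimit of the canonical factorizations really is a factorization of $f$ so that uniqueness applies; this is where functoriality of the construction, rather than merely the pointwise existence of kernels and cokernels, enters.
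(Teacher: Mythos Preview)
Your proposal is correct and follows essentially the same approach as the paper: both arguments use that filtered colimits commute with cokernels (automatically) and with kernels (by hypothesis) to identify $\coim f \cong \colim_i \coim f_i$ and $\im f \cong \colim_i \im f_i$, and then conclude by observing that the canonical comparison map is a filtered colimit of isomorphisms. If anything, your version is more careful about the functoriality of the coimage--image factorization, which the paper leaves implicit.
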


\begin{proof}
Let \(f = \underset{k \in K}\colim f_k \) in \(\Ind(\fC)\), where each \(f_k \colon A_k \to B_k\) is a strict morphism, and \(K\) is a filtered category. Then \(\coim(f_k) \cong \im(f_k)\) for each \(k\), by definition. By hypothesis, we have \(\colim(\ker(B_k \to \coker(f_k))) \cong \ker(B \to \coker(f))\). Using that \(\coker\) commutes with colimits, and again that \(K\) is filtered, we get \[\colim (\coker(\ker(f_k) \to A_k)) \cong \coker(\ker(f) \to A).\] Consequently, \[\colim (\coim (f)) \cong \colim(\coim(f_k)) \cong \colim(\im(f_k)) \cong \colim(\im(f)),\]
which is what we wanted to show. 
\end{proof}

We now turn to \textit{flat} and \textit{projective} objects in a quasi-abelian category.

\begin{definition}\label{def:projective}
\begin{itemize}
\item An object $P$ in a quasi-abelian category \(\fC\) is called \textit{projective} if for any strict epimorphism $V \to W$ in \(\fC\), the induced map $\Hom(P,V)\to \Hom(P,W)$ in the category of abelian groups is surjective;
\item A quasi-abelian category \(\fC\) has \textit{enough projectives} if for any object \(X \in \fC\), there is a strict epimorphism \(P \onto X\), where \(P\) is a projective object.  
\end{itemize}
\end{definition}

\begin{definition}\label{def:flat}
An object \(X \in \fC\) in a closed symmetric monoidal quasi-abelian category with finite limits and colimits with monoidal structure $\ootimes$ is called \textit{flat} if the functor \(V \mapsto X \ootimes V\) commutes with finite limits (or preserves strict monomorphisms). A morphism \(f \colon A \to B\) in \(\mathsf{Comm}(\fC)\) is called \textit{flat} if \(B\) is flat as an object of \(\mathsf{Mod}_A\).  
\end{definition}


\begin{lemma}
For any  \(A \in  \mathsf{Comm}(\mathsf{Ind}(\mathsf{Ban}_R))\), the quasi-abelian category \(\mathsf{Mod}(A)\) has enough projectives and any projective is flat.
\end{lemma}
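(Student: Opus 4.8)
The plan is to exploit the free--forgetful adjunction $L\dashv U$, where $U\colon\mathsf{Mod}(A)\to\mathsf{Ind}(\mathsf{Ban}_R)$ forgets the $A$-action and $L=A\ootimes(-)$ sends a module to the free $A$-module on it, together with the fact recorded in Proposition~\ref{prop:ScnMain} that $\mathsf{Ind}(\mathsf{Ban}_R)$ itself has enough projectives, with projective generators the $\ell^1$-type Banach $R$-modules. The one structural input used repeatedly is that $U$ creates all limits and colimits, hence kernels, cokernels, images and coimages; so $U$ preserves and reflects strict monomorphisms and strict epimorphisms and is conservative.

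\emph{Enough projectives.} Because $U$ preserves strict epimorphisms, the left adjoint $L$ preserves projectives: for $P$ projective in $\mathsf{Ind}(\mathsf{Ban}_R)$ and a strict epimorphism $V\onto W$ in $\mathsf{Mod}(A)$, surjectivity of $\Hom(P,UV)\to\Hom(P,UW)$ is, under the adjunction, surjectivity of $\Hom_A(LP,V)\to\Hom_A(LP,W)$. Given $M\in\mathsf{Mod}(A)$, choose a strict epimorphism $\varphi\colon P\onto UM$ with $P$ projective and let $\widetilde\varphi\colon LP\to M$ be its adjunct, so that $U\widetilde\varphi\circ\eta_P=\varphi$. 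Since a composite $g\circ f$ that is a strict epimorphism in a quasi-abelian category forces $g$ to be a strict epimorphism (see \cite{schneiders1999quasi}), $U\widetilde\varphi$ is a strict epimorphism, and as $U$ reflects these, $\widetilde\varphi\colon LP\onto M$ is a strict epimorphism onto $M$ from the projective object $LP=A\ootimes P$.

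\emph{Projectives are flat.} Any projective $Q$ in $\mathsf{Mod}(A)$ is a retract of a free module, since the strict epimorphism $A\ootimes P\onto Q$ just produced splits. As flatness over $A$ passes to retracts (a retract of a strict monomorphism is a strict monomorphism), it suffices to treat $Q=A\ootimes P$ with $P$ projective in $\mathsf{Ind}(\mathsf{Ban}_R)$. For such $Q$ the projection formula gives a natural isomorphism $(A\ootimes P)\ootimes_A M\cong P\ootimes M$ of underlying ind-Banach modules, identifying $(A\ootimes P)\ootimes_A(-)$ with $(P\ootimes(-))\circ U$; since $U$ preserves strict monomorphisms, flatness of $A\ootimes P$ over $A$ reduces to flatness of $P$ in $\mathsf{Ind}(\mathsf{Ban}_R)$.

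The remaining assertion — every projective object of $\mathsf{Ind}(\mathsf{Ban}_R)$ is flat — is the real content and the expected main obstacle. A projective object of $\mathsf{Ban}_R$ is a retract of an $\ell^1$-sum $\ell^1(X,R)$, and such a module is flat because $\ell^1(X,R)\ootimes M\cong\ell^1(X,M)$ and $M\mapsto\ell^1(X,M)$ carries strict monomorphisms of Banach modules (embeddings with closed range) to strict monomorphisms. A projective object of $\mathsf{Ind}(\mathsf{Ban}_R)$ is a retract of a coproduct of such Banach modules; finite coproducts of strict monomorphisms are strict monomorphisms, arbitrary coproducts of them are too by Lemma~\ref{lem:filtered_strict}, and retracts of strict monomorphisms are strict monomorphisms, so flatness is inherited through each of these operations. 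Assembling the reductions then proves both claims; the only genuinely analytic ingredient, where one leans on the classical structure theory of projective Banach modules, is the flatness of $\ell^1(X,R)$ for the completed projective tensor product.
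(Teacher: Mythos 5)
The paper states this lemma without any proof (it is treated as a known fact, recorded in the references \cite{Ben-Bassat-Kremnizer:Nonarchimedean_analytic} and \cite{ben2020fr}), so there is no proof of record to compare your argument against. Your argument is correct in structure and in essentially all details, and it is the argument one would expect: transport projectives along the free--forgetful adjunction, reduce flatness of $A\ootimes P$ to flatness of $P$ via the projection formula, and then reduce to the absolute case $\mathsf{Ind}(\mathsf{Ban}_R)$, which ultimately rests on the flatness of $\ell^1$-type generators for the completed projective tensor product. Two small points worth tightening. First, the projective generators of $\mathsf{Ban}_R$ over a general Banach ring are the \emph{weighted} $\ell^1$-sums $\bigoplus^{\ell^1}_{x\in X} R_{r_x}$ (the natural strict epimorphism onto a Banach module $V$ comes from $\bigoplus^{\ell^1}_{v\in V} R_{\|v\|}$, and those weights cannot in general be rescaled away), not the unweighted $\ell^1(X,R)$; your flatness computation goes through verbatim for the weighted version, but the statement as written is slightly too narrow. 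Second, your argument establishes flatness of $\ell^1(X,R)$ against strict monomorphisms of \emph{Banach} modules; for the conclusion you need flatness in $\mathsf{Ind}(\mathsf{Ban}_R)$, i.e.\ against strict monomorphisms of ind-Banach modules. This is fine, but it uses the additional fact that a strict monomorphism in $\mathsf{Ind}(\mathsf{Ban}_R)$ can be represented as a filtered colimit of levelwise strict monomorphisms, after which Lemma~\ref{lem:filtered_strict} finishes the job; it is worth making that reduction explicit, since it is the same mechanism you invoke for arbitrary coproducts.
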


\subsection{Algebraic geometry relative to symmetric monoidal quasi-abelian categories}\label{subsection:borno_geo}

The purpose of this section is to put the class of morphisms that are the subject of this article - that is, the so-called \textit{homotopy epimorphisms} - in a broader context of relative algebraic geometry. The starting point is a closed symmetric monoidal quasi-abelian category $\fC$  with monoidal structure $\ootimes$, and internal \(\Hom\) denoted by \(\underline{\Hom}_\fC\). Given this data, we can define the category of \textit{affine schemes} \(\mathsf{Aff}(\fC) \defeq \mathsf{Comm}(\fC)^\op\), where the right hand side denotes the opposite category of commutative monoids in \(\fC\). 

In order to build global objects, such as \textit{schemes} (or \textit{stacks}) relative to $\fC$, we need to equip the category of affine schemes \(\mathsf{Aff}(\fC)\) with a suitable Grothendieck topology \(\tau\). This is motivated by the familiar \textit{functor of points approach}, which first builds the category of presheaves \(\mathsf{PSh}(\fC) \defeq \mathsf{Fun}(\mathsf{Comm}(\fC), \mathsf{Set})\). The topology is then used to define the full subcategory \(\mathsf{Sh}(\fC, \tau)\) of sheaves on the site \((\fC, \tau)\) - these are functors \(F \colon \mathsf{Comm}(\fC) \to \mathsf{Set}\) satisfying \[F(X) \cong \mathsf{eq}(\prod_i F(X_i) \rightrightarrows \prod_{i,j} F(X_i \times_X X_j)),\] for each object \(X \in \mathsf{Comm}(\fC)\) and every cover \((X_i \to X)_i\) with respect to \(\tau\).  By gluing along epimorphisms of representable sheaves \(\Hom(-, X_i) \onto F\), with \(X_i \in \mathsf{Comm}(\fC)\), one obtains the category \(\mathsf{Sch}(\fC, \tau)\) of schemes relative to the site \((\fC, \tau)\). 

There is an important Grothendieck pretopology used in \cite{Ben-Bassat-Kremnizer:Nonarchimedean_analytic} - the \textit{homotopy monomorphism} topology (sometimes called the formal homotopy Zariski topology). The following morphisms appear in the covers for these topologies:

\begin{definition}\label{def:homotopy_epi}
Let \(\fC\) be a closed, symmetric monoidal quasi-abelian category. A morphism \(f \colon A \to B\) of (unital)  commutative monoids in $\fC$ is called a \textit{homotopy epimorphism} if the induced functor \(\fD(B) \to \fD(A)\) is fully faithful. The corresponding morphism in \(\mathsf{Aff}(\fC)\) is called a \textit{homotopy monomorphism}.
\end{definition}  

In order to be able the work with the definition above, we use the following reformulation, which applies to all categories of interest to us:

\begin{lemma}
Let \(f \colon A \to B\) be a morphism in \(\mathsf{Comm}(\fC)\). Assume that the induced base change functor \(\mathsf{Mod}_A \to \mathsf{Mod}_B\), \(M \mapsto B \ootimes_A M\) has a left derived functor. Then \(f\) is a homotopy epimorphism if and only if \(B \ootimes_A^{\mathbb{L}} B \cong B\) in \(\fD(B)\).  
\end{lemma}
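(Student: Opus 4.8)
The plan is to recognise the homotopy epimorphism condition as the assertion that the counit of an adjunction on derived categories is an isomorphism, and then to reduce the general case to the value of that counit at the unit object $B$.

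\emph{Setup.} Write $G\colon\fD(B)\to\fD(A)$ for the functor induced by restriction of scalars along $f$; restriction of scalars is exact, so it passes to derived categories without needing to be derived, and it is precisely the functor whose fully faithfulness is in question. By hypothesis the base change functor $M\mapsto B\ootimes_A M$ has a left derived functor $L\colon\fD(A)\to\fD(B)$, $L(-)=B\ootimes_A^{\mathbb{L}}(-)$, and the underived adjunction $B\ootimes_A(-)\dashv G$ passes to a derived adjunction $L\dashv G$ of triangulated functors: $L$ is a left adjoint, so it preserves distinguished triangles and arbitrary coproducts, and $G$ preserves all colimits and triangles since it is exact and computed on underlying objects. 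The general fact that a right adjoint is fully faithful exactly when its counit is a natural isomorphism now gives: $f$ is a homotopy epimorphism if and only if the counit $\varepsilon\colon LG\Rightarrow\id_{\fD(B)}$ is a natural isomorphism.

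\emph{Proof of the equivalence.} Evaluating $\varepsilon$ at $N=B$ gives exactly the multiplication map $\mu\colon B\ootimes_A^{\mathbb{L}}B\to B$; hence if $\varepsilon$ is a natural isomorphism then $B\ootimes_A^{\mathbb{L}}B\cong B$ in $\fD(B)$. Conversely, assume $\mu$ is an isomorphism. It is a morphism of $B$-bimodules (and the inverse of a bilinear isomorphism is bilinear), so it is an isomorphism of $B$-bimodules; thus $B\ootimes_A^{\mathbb{L}}B\cong B$ as $B$-bimodules. Now for any $N\in\fD(B)$, associativity and unitality of the derived tensor product give a natural identification $LG(N)=B\ootimes_A^{\mathbb{L}}N\cong(B\ootimes_A^{\mathbb{L}}B)\ootimes_B^{\mathbb{L}}N$ under which $\varepsilon_N$ becomes the composite $(B\ootimes_A^{\mathbb{L}}B)\ootimes_B^{\mathbb{L}}N\xrightarrow{\mu\ootimes_B^{\mathbb{L}}\id_N}B\ootimes_B^{\mathbb{L}}N\xrightarrow{\sim}N$. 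Since $\mu$ is an isomorphism of $B$-bimodules, $\mu\ootimes_B^{\mathbb{L}}\id_N$ is an isomorphism, so $\varepsilon_N$ is an isomorphism for every $N$; hence $\varepsilon$ is a natural isomorphism and $f$ is a homotopy epimorphism.

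\emph{A variant, and the main obstacle.} Alternatively, once $\varepsilon_B$ is known to be an isomorphism one can finish by dévissage: the full subcategory of $N\in\fD(B)$ for which $\varepsilon_N$ is an isomorphism is closed under shifts and cones (triangulated five lemma applied to the morphism of triangles given by $\varepsilon$) and under coproducts (both $LG$ and $\id_{\fD(B)}$ preserve them), hence is a localizing subcategory; it contains the compact generator $B$ of $\fD(B)$, and is therefore all of $\fD(B)$. The substance of the argument — and where I expect the real work to be — is foundational rather than formal: making precise the existence and well-behavedness of the derived functors $L$ and $\ootimes^{\mathbb{L}}$ in this quasi-abelian setting, the associativity and unitality identifications and their compatibility with the counit $\varepsilon$ (for the first proof), and the fact that $B$ generates $\fD(B)$ as a localizing subcategory (for the dévissage variant). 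Granting those points, both arguments are only a few lines.
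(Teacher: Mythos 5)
The paper states this lemma as a ``reformulation'' of Definition~\ref{def:homotopy_epi} and gives no proof at all, so there is no argument in the text to compare against. Your proof is correct and is the standard one: homotopy epimorphism means the restriction functor $G\colon\fD(B)\to\fD(A)$ is fully faithful, a right adjoint is fully faithful exactly when the counit of the adjunction $L\dashv G$ is an isomorphism, and evaluating the counit at $B$ is precisely the multiplication map $B\ootimes_A^{\mathbb{L}}B\to B$. Your first route for the converse --- the associativity identification $B\ootimes_A^{\mathbb{L}}N\cong(B\ootimes_A^{\mathbb{L}}B)\ootimes_B^{\mathbb{L}}N$ followed by $\mu\ootimes_B^{\mathbb{L}}\id_N$ --- is the cleaner of your two finishes, and it is exactly what the authors invoke implicitly again in Section~\ref{sec:homotopy_epimorphisms} when they describe homotopy epimorphisms ``equivalently'' via $B\ootimes_A^{\mathbb{L}}B\simeq B$.

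Two small remarks that slightly streamline your write-up. First, since $A$ and $B$ are commutative monoids, left modules, right modules, and bimodules coincide, so your parenthetical on bilinearity of the inverse of $\mu$ is unnecessary: $\mu$ is a $B$-module map in $\fD(B)$ and that is all you use. Second, your dévissage variant leans on $B$ being a compact generator of $\fD(B)$; in the quasi-abelian setting of this paper this is not entirely automatic (the relevant derived categories are not presented as compactly generated in general), whereas the foundational facts you need for the associativity route --- existence of $L$, and associativity/unitality of $\ootimes^{\mathbb{L}}$ --- are supplied by the hypotheses of the lemma together with Schneiders' framework (enough projectives, elementary quasi-abelian categories), so the first argument is the one to keep.
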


The actual definitions of these topologies involve taking families of homotopy epimorphisms \((A \to B_i)_i\) in a homotopy transversal full subcategory \(\mathsf{A} \subseteq \mathsf{Comm}(\fC)\), that induce finite conservative subfamilies on \((\mathsf{Mod}_{\mathsf{A}}^{RR}(A) \to \mathsf{Mod}_{\mathsf{A}}^{RR}(B_i))_i\). We direct the reader to \cite{Ben-Bassat-Kremnizer:Nonarchimedean_analytic} for the relevant definitions, which we do not rewrite here as they will play no direct role in this article.  

We now recall some definitions, using which we will define the categories that are relevant to analytic geometry and also the rest of this paper. 

\begin{definition}\label{def:Banach_ring}
A \textit{Banach ring} is a commutative, unital ring \(R\) with a function \(\abs{\cdot} \colon R \to \R_{\geq 0}\) satisfying the following properties:

\begin{itemize}
\item \(\abs{a} = 0\) if and only if \(a = 0\);
\item \(\abs{a+b} \leq \abs{a} + \abs{b}\) for all \(a\), \(b \in R\);
\item there exists a \(C>0\) such that \(\abs{ab} \leq C\abs{a}\abs{b}\) for all \(a\), \(b \in R\);
\item \(R\) is complete with respect to the metric induced by the function \(\abs{\cdot}\).
\end{itemize}

\end{definition}

Examples of Banach rings include the fields of complex numbers \(\C\), the real numbers \(\R\) and the integers \(\Z\) with the usual absolute value. Other examples include the \(p\)-adic numbers \(\Q_p\) and the \(p\)-adic integers \(\Z_p\) with their \(p\)-adic norm. Finally, a degenerate example is any ring or field with the trivial norm \(\norm{x} = 1\) for all \(x \neq 0\) and \(\norm{0} = 0\). 

\begin{definition}\label{def:Ban_module}
Let \((R,\abs{\cdot}_R)\) be a Banach ring. A \textit{semi-normed \(R\)-module} is an \(R\)-module \(M\), together with a function \(\abs{\cdot}_M \colon M \to \R_{\geq 0}\) satisfying 

\begin{itemize}
\item \(\abs{0}_M = 0\);
\item \(\abs{m+n}_M \leq \abs{m}_M + \abs{n}_M\);
\item \(\abs{a\cdot m}_M \leq C \abs{a}_R \abs{m}_M\) for some \(C>0\);
\end{itemize}

A \textit{normed \(R\)-module} is a semi-normed \(R\)-module where the norm is non-degenerate, that is, \(\abs{m}_M = 0\) if and only if \(m = 0\). We call a normed \(R\)-module a \textit{Banach \(R\)-module} if it is complete with respect to the metric induced by the norm.
\end{definition}

We denote by \(\mathsf{Ban}_R\) the category of Banach \(R\)-modules. This is a symmetric monoidal category with the \textit{completed projective tensor product}, $\wotimes_{R}$ defined as the completion of the algebraic tensor product \(V \otimes_R W\) with respect to the norm \(\norm{x}_{V \otimes W} \defeq \inf \setgiven{\sum_{i \in I} \norm{m_i}\norm{n_i}}{x = \sum_{i \in I} m_i \otimes n_i, \abs{I} < \infty}\). The category \(\mathsf{Ban}_R\) also has a canonical internal-\(\Hom\), namely the \(R\)-module of bounded linear maps \(\Hom(V,W)\) with the norm  \[\norm{T} \defeq \sup_{\norm{v}\neq 0}\frac{\norm{T(v)}_W}{\norm{v}_V},\] 
turning it into a closed monoidal category. When $R$ is non-archimedean it is sometimes helpful to work with the category  \(\mathsf{Ban}^{na}_R\) of non-archimedean Banach modules, for which the  \textit{completed projective tensor product}, $\wotimes^{na}_{R}$ defined as the completion of the algebraic tensor product \(V \otimes_R W\) with respect to the norm \[\norm{x}_{V \otimes W} \defeq \inf \setgiven{\sup_{i \in I} \norm{m_i}\norm{n_i}}{x = \sum_{i \in I} m_i \otimes n_i, \abs{I} < \infty}. \]

\begin{lemma}
For any Banach ring \(R\), the category \(\mathsf{Ban}_R\) is a quasi-abelian category with enough flat projectives. Consequently, the category \(\mathsf{Ind}(\mathsf{Ban}_R)\) is also an elementary quasi-abelian category. 
\end{lemma}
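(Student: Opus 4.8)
The plan is to transcribe the argument for $\mathsf{Ban}_\C$ recalled above to an arbitrary Banach ring $R$ (being careful with norm-equivalence constants, since there is no open mapping theorem for a general $R$), then to produce an explicit family of flat projective generators, and finally to feed this into Proposition~\ref{prop:ScnMain}.

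First, $\mathsf{Ban}_R$ is additive (finite biproducts with the maximum norm), and for a bounded $f\colon V\to W$ the kernel is $f^{-1}(0)$ with the restricted norm (a closed, hence complete, submodule) and the cokernel is $W/\overline{f(V)}$ with the quotient norm. Unwinding Definition~\ref{def:strict}, a morphism is a strict monomorphism iff it is injective with closed image and source norm equivalent to the subspace norm, and a strict epimorphism iff it is surjective with target norm equivalent to the quotient norm. The two quasi-abelian axioms are then verified on explicit models: the pushout of a strict monomorphism $i\colon V\hookrightarrow W$ along $g\colon V\to V'$ is $(W\oplus V')/N$ with $N=\{(i(v),-g(v)):v\in V\}$, and one checks that $N$ is closed (because $i(V)$ is closed and $i$ is a topological embedding) and that the structure map $V'\to(W\oplus V')/N$ is again injective with closed image and equivalent norm; the dual computation handles pullbacks of strict epimorphisms. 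This is the $R=\C$ argument with the estimates carried along.

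Next, for a set $S$ let $\ell^1(S,R)=\{a\colon S\to R:\sum_s\lVert a(s)\rVert_R<\infty\}$ with norm $\sum_s\lVert a(s)\rVert_R$, the free Banach $R$-module on $S$; here $\Hom_{\mathsf{Ban}_R}(\ell^1(S,R),V)$ is naturally the module of bounded $S$-indexed families in $V$. Because a strict epimorphism $V\twoheadrightarrow W$ has target norm equivalent to the quotient norm, every bounded family in $W$ lifts (using choice) to a bounded family in $V$, so $\ell^1(S,R)$ is projective; and taking $S$ to be the closed unit ball of an arbitrary $V$, evaluation $\ell^1(S,R)\twoheadrightarrow V$ is an isometric strict epimorphism, so $\mathsf{Ban}_R$ has enough projectives. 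For flatness, $\ell^1(S,R)$ is the coproduct of $S$-many copies of $R$ in $\mathsf{Ban}_R$ and $\wotimes_R$ preserves colimits, so $\ell^1(S,R)\wotimes_R V\cong\ell^1(S,V)$ naturally in $V$; and $V\mapsto\ell^1(S,V)$ sends strict monomorphisms to strict monomorphisms, as injectivity, closedness of the image, and equivalence of norms all hold coordinatewise. Hence $\ell^1(S,R)$ is flat, and — since any projective is a retract of such a module and flatness passes to retracts — every projective object of $\mathsf{Ban}_R$ is flat. (In the non-archimedean case, with $\mathsf{Ban}_R^{na}$ and $\wotimes_R^{na}$, replace $\ell^1(S,R)$ by $c_0(S,R)$; nothing else changes.)

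Finally, $\mathsf{Ban}_R$ is closed symmetric monoidal for $\wotimes_R$ (internal Hom: bounded maps with the operator norm), quasi-abelian by the first step and with enough projectives by the second, so Proposition~\ref{prop:ScnMain} gives that $\mathsf{Ind}(\mathsf{Ban}_R)$ is a closed symmetric monoidal elementary quasi-abelian category. The one wrinkle is that that proposition is stated for a small category while $\mathsf{Ban}_R$ is large, which one handles as usual by fixing a universe (equivalently, replacing $\mathsf{Ban}_R$ by an essentially small full closed symmetric monoidal subcategory containing every $\ell^1(S,R)$), affecting neither $\mathsf{Ind}(\mathsf{Ban}_R)$ nor the conclusion. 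I expect the only mildly delicate points to be exactly this smallness bookkeeping and the verification of flatness (as opposed to mere projectivity) of the free modules; everything else is routine.
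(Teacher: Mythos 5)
The paper gives no in-text argument for this lemma and simply cites \cite{ben2020fr}, so there is no approach to compare against; you supply an actual proof sketch. Most of it holds up: the identification of kernels, cokernels, strict monomorphisms and strict epimorphisms; the verification of the two quasi-abelian axioms on explicit pushout and pullback models; the projectivity and flatness of the $\ell^1$-modules; and the smallness/universe bookkeeping needed before invoking Proposition~\ref{prop:ScnMain} are all correct.

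The gap is in the ``enough projectives'' step. You take $S$ to be the closed unit ball of $V$ and assert that evaluation $\ell^1(S,R)\twoheadrightarrow V$, $e_v\mapsto v$, is a (strict) epimorphism. That construction silently relies on a rescaling move that is only available over a non-trivially valued Banach \emph{field}: there one can multiply any $w\neq 0$ by a scalar to bring it into the unit ball. Over a general Banach ring there is no such rescaling, and the unit ball need not even generate $V$. Concretely, let $R=\Z$ with its usual archimedean absolute value and let $V=\Z$ with the norm $\norm{n}=2\abs{n}$: the closed unit ball of $V$ is $\{0\}$, so $\ell^1(B_V,R)\to V$ is the zero map and in particular is not surjective. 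So as written, $\mathsf{Ban}_R$ has not been shown to have enough projectives.

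The repair is to weight the generators, exactly as the rescaled copies $R_r$ of $R$ are used in Section~\ref{subsec:symmetric_algebra}. For a weight $\rho\colon S\to\R_{>0}$ put $\ell^1(S,R;\rho)\defeq\{a\colon S\to R:\sum_{s}\norm{a(s)}_R\,\rho(s)<\infty\}$ with norm $\sum_{s}\norm{a(s)}_R\,\rho(s)$; this is the $\ell^1$-coproduct of the rescaled modules $R_{\rho(s)}$. These are still projective and flat by precisely your argument: $\ell^1(S,R;\rho)\wotimes_R V\cong\ell^1(S,V;\rho)$ because $\wotimes_R$ preserves coproducts and $R_{\rho(s)}\wotimes_R V\cong V_{\rho(s)}$, and the functor $V\mapsto\ell^1(S,V;\rho)$ sends strict monomorphisms to strict monomorphisms coordinatewise. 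Taking $S=V\setminus\{0\}$ and $\rho(v)=\norm{v}_V$, the evaluation $e_v\mapsto v$ has operator norm controlled by the constant $C$ of Definition~\ref{def:Ban_module}, is surjective, and is a strict epimorphism, which supplies enough flat projectives. One should note that the isomorphism $R\cong R_r$ collapses \emph{finite} rescaled coproducts to ordinary ones but does \emph{not} pass to infinite $\ell^1$-coproducts with unbounded weights, which is why the weighted version is genuinely needed rather than a cosmetic variant of $\ell^1(S,R)$. With this correction your route to the statement is sound, and the same weighted construction (with the $c_0$-coproduct) handles the non-archimedean case.
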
 

\begin{proof}
See \cite{ben2020fr} for all the claims.
\end{proof}

We recall here some details from  \cite{Ben-Bassat-Kremnizer:Nonarchimedean_analytic} and \cite{ben2020fr} on infinite products in the various categories of interest. We first need to borrow the following definition from \cite{ben2020fr}
\begin{definition}\label{defn:PsiUpsilon} Given a set $I$, let $\Psi(I)$ be the poset consisting of functions $\psi:I \to \mathbb{Z}_{\geq 1}$ with the order $\psi_{1}\leq \psi_{2}$ if $\psi_{1}(i)\leq \psi_{2}(i)$ for all $i \in I$. 
The set of objects in the category $\Psi(I)$ with is $\underset{i \in I}\prod\mathbb{Z}_{\geq 1}$.
\end{definition}
It is easy to show that for $R$ a non-archimedean Banach ring that the product of the collection of objects $V_i$ of $R$-Banach modules in the category $\mathsf{Ban}^{\leq 1}_R$ is given by
\[\{
(v_i)_{i \in I} \in \times_{i \in I}V_{i} \ \ | \ \ \sup_{i \in I} \|v_{i}\| < \infty \}  
\] equipped with the norm 
\[\|(v_i)_{i \in I} \| =\sup_{i \in I} \|v_{i}\|.
\]
For $R$ a non-archimedean Banach ring that the product of the collection of objects $V_i$ of non-archimedean Banach modules the product in the category $\mathsf{Ban}^{\leq 1, na}_R$ is given by
\[\{
(v_i)_{i \in I} \in \times_{i \in I}V_{i} \ \ | \ \ \sup_{i \in I} \|v_{i}\| < \infty \}  
\] equipped with the norm 
\[\|(v_i)_{i \in I} \| =\sup_{i \in I} \|v_{i}\|.
\]
Because these formulas exactly the same, we observe that for $R$ a non-archimedean Banach ring that the product of a collection of objects $V_i$ of non-archimedean Banach modules is the same regardless of if it is computed in $\mathsf{Ban}^{\leq 1}_R$ or $\mathsf{Ban}^{\leq 1, na}_R$. 
The product of a countable collection of objects in the category \(\Ind(\mathsf{Ban}_R)\)is given by
\[``\underset{\psi \in \Psi(I)}\colim" \prod_{i \in I}{}^{\leq 1} ((V_{i})_{\psi(i)^{-1}})
\]
whereas the product of a countable collection of objects in the category \(\Ind(\mathsf{Ban}^{na}_R)\) is given by
\[``\underset{\psi \in \Psi(I)}\colim" \prod_{i \in I}{}^{\leq 1, na} ((V_{i})_{\psi(i)^{-1}}).
\]
However, by the previous conclusion, these formulae are also the same allowing us to conclude that for $R$ a non-archimedean Banach ring and a countable collection of objects $V_i$ of non-archimedean Banach modules that the functor \(\Ind(\mathsf{Ban}^{na}_R) \to \Ind(\mathsf{Ban}_R)\) preserves these products. Notice that these are simply increasing unions since when $\psi_1 \leq \psi_2$ we have \[\prod_{i \in I}{}^{\leq 1} ((V_{i})_{\psi_1(i)^{-1}}) \subseteq \prod_{i \in I}{}^{\leq 1} ((V_{i})_{\psi_2(i)^{-1}}) \]
and similarly for the non-archimedean case.

It is shown in \cite{Ben-Bassat-Kremnizer:Nonarchimedean_analytic} that by using the machinery that we have recalled in this subsection to define schemes relative to a homotopy transversal subcategory \(\mathsf{A} \subseteq \mathsf{Comm}(\mathsf{Ban}_{\Q_p})\), one can recover rigid analytic geometry. The homotopy transversal subcategory in these cases is the category of affinoid \(\Q_p\)-algebras. For similar results in the overconvergent and Stein cases, we direct the reader to \cites{Bambozzi-Ben-Bassat-Kremnizer:Stein, Bambozzi-Ben-Bassat:Dagger}.

In this article, we will primarily work in the category \(\mathsf{Ind}(\mathsf{Ban}_R)\), although there are some occasions where the full subcategory \(\mathsf{CBorn}_R \defeq \Ind^m(\mathsf{Ban}_R)\) of essentially monomorphic objects is called into action. We mainly do this when \(R = k\) is a non-trivially valued Banach field, in which case this category has an intrinsic description in terms of \textit{bornologies}.

\begin{definition}\label{def:bornologies}
A \textit{bornology} on a set \(X\) is a collection \(\mathfrak{B}\) of its subsets, satisfying the following:

\begin{itemize}
\item for every \(x \in X\), \(\{x\} \in \mathfrak{B}\);
\item if \(S\), \(T \in \mathfrak{B}\), then \(S \cup T \in \mathfrak{B}\);
\item if \(S \in \mathfrak{B}\), and \(T \subseteq S\), then \(T \in \mathfrak{B}\). 
\end{itemize}
\end{definition}

We call members of a bornology \textit{bounded sets}. A function \(f \colon X \to Y\) between bornological sets is called \textit{bounded} if it maps bounded subsets of \(X\) to bounded subsets of \(Y\).

\begin{definition}\label{def:bornological_module}
Let \(R\) be a non-trivially valued Banach field. A \textit{bornological \(R\)-vector space} \(M\) is an \(R\)-vector space with a bornology such that the scalar multiplication and addition maps
$$R\times M\rightarrow M$$
$$M\times M\rightarrow M$$
are bounded functions.
\end{definition}

\begin{example}\label{ex:fine_bornology}
The \textit{fine bornology} on a \(k\)-vector space \(M\) is the bornology that is cofinally generated by finite-dimensional \(k\)-vector subspaces of \(M\). That is, a subset \(B \subseteq M\) is bounded if and only if there is a finite-dimensional subspace \(S \cong k^n \subseteq M\), such that \(B \subseteq S\) and \(B\) is bounded in the usual sense. 
\end{example}

The fine bornology is essentially the only natural bornology in purely algebraic situations. It turns, for instance, the polynomial algebra into a complete bornological algebra. For larger algebras such as those which arise in analytic geometry, one must use a more reasonable bornology that accounts for the analytical structure.

\begin{definition}\label{def:von-Neumann}
Let \(V\) be a locally convex topological \(k\)-vector space. The \textit{von Neumann bornology} is defined as the bornology whose bounded subsets are those which are absorbed by all neighborhoods of the origin. 
\end{definition}

To relate inductive systems with bornologies, we first note that equipping a semi-normed $k$-module with the von Neumann bornology defines a canonical functor
\[b:\mathsf{NMod}_k^{1/2}\rightarrow\mathsf{Born}_{k}^{1/2}\] from the category of semi-normed \(k\)-vector spaces to the category of bornological \(k\)-vector spaces. 

\begin{proposition}\cite{bambozzi}*{Proposition 2.1.5}\label{prop:bornologies_ind_systems}
Let \(k\) be a non-trivially valued Banach field. Then the above functor extends to a fully faithful functor \(\mathsf{Ind}^m(\mathsf{NMod}_k^{1/2})\rightarrow\mathsf{Born}_k^{1/2}\) from the category of inductive systems of semi-normed \(k\)-vector spaces to the category of bornological \(k\)-vector spaces. Its essential image consists of bornological \(k\)-vector spaces \(M\) such that for any bounded subset \(S \subseteq M\), its absolute convex hull remains bounded. The same claims hold for the categories \(\mathsf{Ind}^m(\mathsf{NMod}_k)\) and \(\mathsf{Ind}^m(\mathsf{Ban}_k)\), with the categories of separated and complete bornological \(k\)-vector spaces \(\mathsf{Born}_k\) and \(\mathsf{CBorn}_k\) on the right hand sides, respectively.
\end{proposition}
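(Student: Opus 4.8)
The plan is to construct the extension of $b$ explicitly, verify full faithfulness using the standard description of morphism sets in an ind-category, and then pin down the essential image by an inverse reconstruction. I would carry out the semi-normed case in detail; the normed and Banach cases run identically, with separatedness and completeness of the targets coming along for free.

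First I would construct the functor. An object of $\mathsf{Ind}^m(\mathsf{NMod}_k^{1/2})$ may be presented as $M = \text{``}\colim_{i\in I}\text{''}\,M_i$ with all transition maps monomorphisms; since a monomorphism of semi-normed spaces is injective on underlying vector spaces, the value on $M$ should have underlying $k$-vector space the filtered union $\bigcup_i \iota_i(M_i)$, equipped with the bornology whose bounded sets are the subsets of $\lambda\,\iota_i(B_i)$ for some $i\in I$ and $\lambda>0$, where $B_i\subseteq M_i$ is the closed unit ball and $\iota_i$ the structural map. This is $b$ computed colimit-wise; one checks it is independent of the chosen monomorphic presentation, and it is of convex type because each $\iota_i(B_i)$, being a ball of a semi-normed space, is absolutely convex, whence so is the absolute convex hull of any bounded set.

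Next, full faithfulness. For essentially monomorphic $M=\text{``}\colim_i\text{''}\,M_i$ and $N=\text{``}\colim_j\text{''}\,N_j$ we have $\Hom_{\mathsf{Ind}^m}(M,N)=\lim_i\colim_j\Hom(M_i,N_j)$. A compatible family of bounded maps $M_i\to N_{j(i)}$ representing an element of this set induces a $k$-linear map $T$ on the union, bounded because it carries each generator $\iota_i(B_i)$ into a scalar multiple of $\iota_{j(i)}(B_{j(i)})$. Conversely, a bounded linear $T\colon\bigcup_i M_i\to\bigcup_j N_j$ sends $\iota_i(B_i)$ to a bounded set, hence into some $\lambda_i\,\iota_{j(i)}(B_{j(i)})$; since $\iota_{j(i)}$ is a monomorphism, $T|_{M_i}$ factors through a bounded map $M_i\to N_{j(i)}$, and the resulting family is compatible in $\lim_i\colim_j$, giving a preimage of $T$. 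Injectivity of the induced map on Hom-sets holds because the $\iota_i$ are jointly epimorphic onto the union, so two elements of $\lim_i\colim_j\Hom(M_i,N_j)$ inducing the same map on $\bigcup_i M_i$ coincide. This argument never uses which of the three categories we are in; separatedness of the image when the $M_i$ are normed, and completeness when they are Banach, follow from the corresponding properties of the disks $\iota_i(B_i)$.

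Finally, the essential image. Given a bornological $k$-vector space $M$ of convex type, let $\mathcal D$ be the poset of bounded absolutely convex subsets $D\subseteq M$, directed by inclusion; for $D\in\mathcal D$ set $M_D=\bigcup_{n\geq1}nD$ with the Minkowski gauge $\|x\|_D=\inf\{\lambda>0:x\in\lambda D\}$, a seminorm — a norm when $M$ is separated, and making $M_D$ a Banach space when $M$ is complete, after passing to the cofinal subposet of completant disks. The inclusions $M_D\hookrightarrow M_{D'}$ for $D\subseteq D'$ are monomorphisms, so $\text{``}\colim_{D\in\mathcal D}\text{''}\,M_D$ lies in the appropriate ind-category, and the extended functor returns $M$ with its original bornology, since the bounded subsets of $M$ are exactly those absorbed by some $D\in\mathcal D$; combined with the convex-type property established in the construction step, this identifies the essential image in all three cases with the stated separated/complete targets. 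The step I expect to be the real obstacle is the completeness clause — that in a complete separated bornological space of convex type the completant (Banach) disks are cofinal among bounded disks — which I would either cite from the standard theory of bornological vector spaces or prove directly by completing $M_D$ inside $M$ and using completeness of $M$ to see the completion still embeds; this is precisely where $\mathsf{CBorn}_k$ rather than $\mathsf{Born}_k$ is forced.
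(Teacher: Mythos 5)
The paper offers no proof of this proposition: it is quoted with a pointer to Bambozzi's Proposition~2.1.5, and the surrounding text only names the inverse construction as the \emph{dissection functor}. Your argument is in fact a faithful reconstruction of exactly that construction, so there is no genuine divergence to report, but since you had nothing to compare against, a few remarks are in order.

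Your three steps --- define the extension colimit-wise on a monomorphic presentation, check full faithfulness via $\Hom_{\mathsf{Ind}}(M,N)=\lim_i\colim_j\Hom(M_i,N_j)$ and the injectivity of the structural maps into the colimit, and recover any convex-type $M$ from the directed system of Minkowski gauges $M_D$ over bounded absolutely convex $D$ --- are precisely the dissection argument, and the details you give (absolute convexity of $\iota_i(B_i)$ forcing the image to be of convex type, separatedness of $M$ giving that $\|\cdot\|_D$ is a norm rather than a seminorm, injectivity of the $\iota_j$ being what lets you factor a bounded $T$ through a single $N_{j(i)}$) all check out. One slightly garbled phrase: you say the functor ``identifies the essential image in all three cases with the stated separated/complete targets,'' which reads as essential surjectivity onto $\mathsf{Born}_k$ and $\mathsf{CBorn}_k$; what the proposition actually asserts, and what your $M_D$ construction delivers, is essential surjectivity onto the \emph{convex-type} objects of those categories. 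Whether that is all of $\mathsf{Born}_k$ or $\mathsf{CBorn}_k$ depends on whether convexity is built into their definitions, which the paper leaves to its references.

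The point you flag as the real obstacle --- cofinality of completant (Banach) disks among bounded disks in a complete bornological space of convex type --- is indeed the only non-formal ingredient, and you are right to isolate it. Depending on which definition of ``complete'' one adopts (cofinality of completant disks is the \emph{definition} in Hogbe-Nlend's and Bambozzi's conventions; convergence of bounded Cauchy--Mackey nets is an alternative), this is either immediate or requires the argument you sketch, namely completing $M_D$ inside $M$ and using completeness of $M$ to land back in a bounded disk. Either way, citing it is acceptable, and your instinct that this is the unique place where $\mathsf{CBorn}_k$ differs materially from the separated case is correct.
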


Since we are mainly interested in complete objects such as Banach algebras and inductive systems of such algebras, the category most relevant to us is the category of complete bornological \(k\)-vector spaces. It has the same homological properties as the category \(\Ind(\mathsf{Ban}_k)\), with the advantage of being a concrete category. These properties are summarized as follows:

\begin{proposition}\cite{ben2020fr}
The category of complete bornological \(k\)-vector spaces \(\mathsf{CBorn}_k\) is a closed, symmetric monoidal quasi-abelian category with enough flat projectives. 
\end{proposition}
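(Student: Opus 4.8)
The plan is to transport the entire structure from \(\mathsf{Ind}(\mathsf{Ban}_k)\) along the equivalence of Proposition \ref{prop:bornologies_ind_systems}, which identifies \(\mathsf{CBorn}_k\) with the full subcategory \(\mathsf{Ind}^m(\mathsf{Ban}_k)\) of essentially monomorphic objects. We already know from the lemmas recalled above (and from \cite{schneiders1999quasi}, \cite{ben2020fr}) that \(\mathsf{Ind}(\mathsf{Ban}_k)\) is an elementary, closed symmetric monoidal quasi-abelian category with enough flat projectives, so the work is to show that these properties descend to \(\mathsf{Ind}^m(\mathsf{Ban}_k)\). The key structural input I would invoke is that the inclusion \(\mathsf{Ind}^m(\mathsf{Ban}_k) \hookrightarrow \mathsf{Ind}(\mathsf{Ban}_k)\) is reflective, i.e. admits a left adjoint \(M \mapsto M^m\) (the ``monomorphic-ification'', obtained by replacing a representing ind-diagram by the diagram of its essential images); this is part of the theory of essentially monomorphic objects of Prosmans--Schneiders, and I would cite it rather than reprove it.

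\emph{Quasi-abelian structure.} The subcategory \(\mathsf{Ind}^m(\mathsf{Ban}_k)\) is additive, and it is closed under subobjects, hence under kernels computed in \(\mathsf{Ind}(\mathsf{Ban}_k)\). Cokernels in \(\mathsf{Ind}^m(\mathsf{Ban}_k)\) are obtained by applying \((-)^m\) to the cokernel in \(\mathsf{Ind}(\mathsf{Ban}_k)\); on the bornological side this is precisely the passage ``naive quotient bornology, then separated completion'', which is the reason one must restrict to complete, separated objects in the first place. With kernels and cokernels available, the two quasi-abelian axioms follow by comparison with \(\mathsf{Ind}(\mathsf{Ban}_k)\): strict monomorphisms in \(\mathsf{Ind}^m(\mathsf{Ban}_k)\) coincide with those in \(\mathsf{Ind}(\mathsf{Ban}_k)\) and are preserved by pushout there (and pushouts of monomorphic objects along strict monos remain monomorphic), while pullbacks of strict epimorphisms are computed as in \(\mathsf{Ind}(\mathsf{Ban}_k)\) (limits of monomorphic objects stay monomorphic).

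\emph{Closed symmetric monoidal structure and flat projectives.} Under the equivalence, the completed bornological tensor product corresponds to \(M \mathbin{\ootimes^m} N := (M \ootimes_k N)^m\) and the internal Hom (bounded \(k\)-linear maps with the equibounded bornology) corresponds to the restriction of \(\underline{\Hom}_{\mathsf{Ind}(\mathsf{Ban}_k)}\); one checks that the latter sends pairs of essentially monomorphic objects to essentially monomorphic objects, after which the adjunction \(\Hom(M \mathbin{\ootimes^m} N, P) \cong \Hom(M, \underline{\Hom}(N,P))\) for \(P \in \mathsf{Ind}^m(\mathsf{Ban}_k)\) follows by combining the adjunction in \(\mathsf{Ind}(\mathsf{Ban}_k)\) with the adjunction defining \((-)^m\); symmetry, associativity and the unit \(k\) are inherited. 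For projectives, a set of projective generators of \(\mathsf{Ind}(\mathsf{Ban}_k)\) can be taken to consist of retracts of coproducts of free Banach \(k\)-modules, each of which is already essentially monomorphic; since \((-)^m\) is left adjoint to a fully faithful inclusion it preserves strict epimorphisms and projectives, so applying it to a projective presentation in \(\mathsf{Ind}(\mathsf{Ban}_k)\) of an object of \(\mathsf{Ind}^m(\mathsf{Ban}_k)\) produces the required strict epimorphism from a projective. Flatness in the sense of Definition \ref{def:flat} is then automatic: a projective \(P\) satisfies \(P \ootimes_k -\) preserves strict monomorphisms in \(\mathsf{Ind}(\mathsf{Ban}_k)\), and one transports this using that the separation and completion functors preserve strict monomorphisms into separated, resp.\ complete, objects.

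The main obstacle I expect is the bookkeeping around essentially monomorphic objects: verifying cleanly that the monoidal and internal-Hom structures obtained by transport along Proposition \ref{prop:bornologies_ind_systems} genuinely coincide with the familiar bornological ones, that \(\mathsf{Ind}^m(\mathsf{Ban}_k)\) is stable under internal Hom, and that its cokernels are correctly described by \((-)^m\) applied to cokernels in \(\mathsf{Ind}(\mathsf{Ban}_k)\). An alternative, more self-contained route avoids \((-)^m\) entirely by working directly with \(\mathsf{CBorn}_k\): kernels are bornological subspaces, cokernels are separated-completed quotients, the tensor product is the completion of the tensor-product bornology, and the flat projectives are the free complete bornological \(k\)-spaces on bounded generating sets; but the verification of the quasi-abelian axioms is then somewhat more laborious, which is why I would favour the transport argument.
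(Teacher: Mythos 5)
The paper does not prove this proposition; it is stated with a bare citation to \cite{ben2020fr}, so there is no in-paper argument to compare against, and the only question is whether your sketch actually closes the gap. Your transport strategy --- reflecting the known structure on \(\mathsf{Ind}(\mathsf{Ban}_k)\) along a left adjoint to the inclusion of \(\mathsf{Ind}^m(\mathsf{Ban}_k)\) --- is a reasonable blueprint, but the steps carrying the real weight are asserted rather than argued, and at least one is genuinely delicate. The assertion that a pushout in \(\mathsf{Ind}(\mathsf{Ban}_k)\) of essentially monomorphic objects along a strict monomorphism ``remains monomorphic'' is doing all the work in the first quasi-abelian axiom, and it is not automatic: writing the pushout as \(\coker(A \to B \oplus C)\) and representing by level maps, injectivity of the structure maps of the cokernel ind-system requires the compatibility \(A_{i'} \cap (B_i \oplus C_i) = A_i\) inside \(B_{i'} \oplus C_{i'}\), a condition on the chosen presentations that an arbitrary strict monomorphism between essentially monomorphic ind-objects need not satisfy. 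Relatedly, your claim that strict monomorphisms in \(\mathsf{Ind}^m(\mathsf{Ban}_k)\) ``coincide'' with those in \(\mathsf{Ind}(\mathsf{Ban}_k)\) is itself nontrivial, since the cokernels differ between the two categories (the subcategory applies the reflector to the ambient cokernel), and hence \(\ker(B \to \coker f)\) can change when computed in the subcategory.

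Likewise, stability of \(\mathsf{Ind}^m(\mathsf{Ban}_k)\) under the internal Hom of \(\mathsf{Ind}(\mathsf{Ban}_k)\), together with the identification of that Hom with the equibounded-bornology Hom, is a theorem and not a routine ``check'': the outer \(\lim_i\) in \(\lim_i \colim_j \underline{\Hom}(E_i, F_j)\) runs along restriction maps which are not strict epimorphisms in general, so there is no a priori reason the resulting ind-object is essentially monomorphic. The ``alternative, more self-contained route'' you mention at the end --- verifying directly in bornological language that \(\mathsf{CBorn}_k\) has the stated structure, with the completed bornological tensor product, the equibounded internal Hom, and the free complete bornological modules on bornological sets as flat projective generators --- is in fact how this is handled in \cite{ben2020fr}, and contrary to your expectation it is not the more laborious option: it avoids exactly the bookkeeping about essentially monomorphic presentations that you correctly identify as the main obstacle to the transport argument.
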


\section{Properties of homotopy epimorphisms}\label{sec:homotopy_epimorphisms}


In this section, we state some basic inheritance properties of homotopy epimorphisms, which will be used throughout this article. A homotopy epimorphism (resp. epimorphism) in an $\infty$-category (resp. old-fashioned category) is a morphism $X\to Y$ so that for any object $Z$, the natural morphism $Map(Y,Z) \to Map(X,Z)$ has all homotopy fibers empty or contractable (resp. $\Hom(Y, Z) \to \Hom(X,Z)$ is injective). Throughout this section, $\fM$ is a symmetric monoidal $\infty$-category or a monoidal model category with monoidal structure given as $\ootimes^{\mathbb{L}}$. Using finite homotopy pushouts, which we assume to be given by $\ootimes^{\mathbb{L}}$, homotopy epimorphisms in \(\mathsf{Comm}(\fM)\) are equivalently described as morphisms $A \to B$ for which the natural map $B\ootimes^{\mathbb{L}}_A B \to B$ is an equivalence. 


\begin{lemma}\label{lem:base_change_1}
Let \(f \colon A \to B\) be any morphism in \(\mathsf{Comm}(\fM)\). Then \(f\) is a homotopy epimorphism if and only if for any morphism \(g \colon B \to C\) in \(\mathsf{Comm}(\fM)\), we have \(B \ootimes_A^{\mathbb{L}} C \cong C\). 
\end{lemma}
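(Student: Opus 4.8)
The plan is to deduce both directions from the characterization of homotopy epimorphisms in terms of the multiplication map, i.e.\ $f\colon A\to B$ is a homotopy epimorphism if and only if $\mu\colon B\ootimes_A^{\mathbb L} B\to B$ is an equivalence, together with the basic stability of this condition under base change along $g\colon B\to C$. For the nontrivial direction (``only if''), first I would form the homotopy pushout of $f$ along $g\circ f\colon A\to C$, which by our standing assumption that finite homotopy pushouts in $\mathsf{Comm}(\fM)$ are computed by $\ootimes^{\mathbb L}$ is just $B\ootimes_A^{\mathbb L} C$. Now base change the equivalence $\mu\colon B\ootimes_A^{\mathbb L} B\xrightarrow{\ \sim\ } B$ along $g$: concretely, applying $(-)\ootimes_B^{\mathbb L} C$ to $\mu$ gives
\[
\bigl(B\ootimes_A^{\mathbb L} B\bigr)\ootimes_B^{\mathbb L} C\;\xrightarrow{\ \sim\ }\; B\ootimes_B^{\mathbb L} C\;\cong\;C .
\]
The left-hand side simplifies, by associativity of $\ootimes^{\mathbb L}$ and $B\ootimes_B^{\mathbb L}(-)\cong(-)$, to $B\ootimes_A^{\mathbb L} C$, and chasing the identification shows the resulting map is exactly the structure map $C=A\ootimes_A^{\mathbb L} C\to B\ootimes_A^{\mathbb L} C$ of the base change; hence $B\ootimes_A^{\mathbb L} C\cong C$ in $\mathsf{Comm}(\fM)$ (equivalently in $\fD(C)$).

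For the converse (``if''), simply specialize to the identity $g=\id_B\colon B\to B$: the hypothesis yields $B\ootimes_A^{\mathbb L} B\cong B$, and the identification is readily checked to be the multiplication map $\mu$ (it is the map induced on the homotopy pushout by $\id_B$ on both factors), so $f$ is a homotopy epimorphism by the reformulation recalled at the start of the section.

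The main point requiring care is the bookkeeping of which map is being asserted to be an equivalence: in each step I must verify that the canonical structure morphism of the relevant base change (the unit-type map $C\to B\ootimes_A^{\mathbb L} C$, resp.\ $\mu\colon B\ootimes_A^{\mathbb L} B\to B$) is the one identified with an equivalence under the associativity and unit coherences, rather than merely that an abstract equivalence of objects exists. This is routine in a symmetric monoidal $\infty$-category (or monoidal model category) once one works in the slice over $A$ and uses that $\mathsf{Comm}(\fM)_{A/}$ has finite coproducts given by $\ootimes_A^{\mathbb L}$, so no genuine obstacle arises; it is purely a matter of tracking the coherence isomorphisms.
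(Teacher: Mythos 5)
Your proof is correct and takes essentially the same approach as the paper: the paper writes the ``only if'' direction as the one-line chain $B \ootimes_A^{\mathbb{L}} C \cong B \ootimes_A^{\mathbb{L}} B \ootimes_B^{\mathbb{L}} C \cong B \ootimes_B^{\mathbb{L}} C \cong C$ and dismisses the converse as trivial, which is exactly your argument (base-change $\mu$ along $g$, use associativity and unitality, then specialize to $g=\id_B$), only with your added bookkeeping of which canonical map realizes each equivalence.
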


\begin{proof}
We have \[B \ootimes_A^{\mathbb{L}} C \cong B \ootimes_A^{\mathbb{L}} B \ootimes_B^{\mathbb{L}} C \cong B \ootimes_B^{\mathbb{L}} C \cong C.\] The converse is trivial. 
\end{proof} 

\begin{proposition}\label{prop:two_out_of_three}
Let \(f \colon A \to B\) and \(g \colon B \to C\) be morphisms in \(\mathsf{Comm}(\fM)\). If two out of \(f\), \(g\) and \(g \circ f\) are homotopy epimorphisms, then so is the third.
\end{proposition}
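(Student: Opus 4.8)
The plan is to prove the three implications by translating "homotopy epimorphism" into the equivalent condition $D \ootimes_A^{\mathbb{L}} D \simeq D$ (equivalently, fully faithfulness of the restriction of scalars $\fD(D) \to \fD(A)$, or the reformulation from Lemma \ref{lem:base_change_1}) and pushing everything through the transitivity/associativity of the derived tensor product $C \ootimes_B^{\mathbb{L}} (B \ootimes_A^{\mathbb{L}} M) \simeq C \ootimes_A^{\mathbb{L}} M$, plus the base-change characterization of Lemma \ref{lem:base_change_1}.

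First I would treat the case where $f$ and $g$ are homotopy epimorphisms and deduce it for $g \circ f$. Here I would simply compute, using associativity of $\ootimes^{\mathbb{L}}$ and then the hypotheses on $g$ and $f$ in turn,
\[
C \ootimes_A^{\mathbb{L}} C \simeq C \ootimes_B^{\mathbb{L}} B \ootimes_A^{\mathbb{L}} B \ootimes_B^{\mathbb{L}} C \simeq C \ootimes_B^{\mathbb{L}} B \ootimes_B^{\mathbb{L}} C \simeq C \ootimes_B^{\mathbb{L}} C \simeq C,
\]
where the last step uses that $g$ is a homotopy epimorphism; so $g \circ f$ is one. Next, assume $g$ and $g \circ f$ are homotopy epimorphisms and deduce it for $f$. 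The natural approach is to use Lemma \ref{lem:base_change_1}: since $g$ is a homotopy epimorphism, $C \ootimes_B^{\mathbb{L}} M \simeq M$ for any $B$-algebra $M$; in particular applying this with $M = B \ootimes_A^{\mathbb{L}} B$ (a $B$-algebra via either factor) gives $C \ootimes_A^{\mathbb{L}} B \simeq C \ootimes_B^{\mathbb{L}} (B \ootimes_A^{\mathbb{L}} B) \simeq B \ootimes_A^{\mathbb{L}} B$, and similarly $C \ootimes_A^{\mathbb{L}} C \simeq C \ootimes_B^{\mathbb{L}} C \ootimes_B^{\mathbb{L}} (B \ootimes_A^{\mathbb{L}} B)$; combining these with $C \ootimes_A^{\mathbb{L}} C \simeq C$ (since $g \circ f$ is a homotopy epimorphism) should force $B \ootimes_A^{\mathbb{L}} B \simeq B$. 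I expect this is the step requiring the most care: one must make sure the $B$-algebra structures being used to invoke Lemma \ref{lem:base_change_1} are the correct ones, and that the chain of identifications is compatible; it may be cleanest to instead argue at the level of $\infty$-categories, observing that $\fD(C) \to \fD(B)$ and $\fD(C) \to \fD(A)$ are fully faithful, hence so is $\fD(B) \to \fD(A)$ (a map whose composite with a fully faithful functor is fully faithful, and which admits a section-like factorization, is fully faithful).

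Finally, assume $f$ and $g \circ f$ are homotopy epimorphisms and deduce it for $g$. Using Lemma \ref{lem:base_change_1} for $f$, base change along $f$ of any $B$-algebra is trivial: $B \ootimes_A^{\mathbb{L}} M \simeq M$. Then
\[
C \ootimes_B^{\mathbb{L}} C \simeq C \ootimes_B^{\mathbb{L}} (B \ootimes_A^{\mathbb{L}} C) \simeq C \ootimes_A^{\mathbb{L}} C \simeq C,
\]
the last equivalence because $g \circ f$ is a homotopy epimorphism; so $g$ is one. The cleanest exposition is probably to phrase all three arguments uniformly in terms of the fully-faithfulness of derived restriction functors and the evident two-out-of-three property for fully faithful functors in a commuting triangle, with the explicit tensor computations above as the concrete backbone; the only genuine subtlety, as noted, is the middle case, where one cannot directly cancel a tensor factor and must instead exploit that $C$ is faithfully "transparent" over $B$ via Lemma \ref{lem:base_change_1} to transport the triviality of $C \ootimes_A^{\mathbb{L}} C$ back to $B \ootimes_A^{\mathbb{L}} B$.
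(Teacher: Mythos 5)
Your computations for the two valid implications are correct and essentially the paper's own. The case ``$f$ and $g$ homotopy epimorphisms $\Rightarrow g\circ f$ homotopy epimorphism'' via $C\ootimes_A^{\mathbb{L}}C\simeq C\ootimes_B^{\mathbb{L}}(B\ootimes_A^{\mathbb{L}}B)\ootimes_B^{\mathbb{L}}C\simeq C\ootimes_B^{\mathbb{L}}C\simeq C$ is fine, and the case ``$f$ and $g\circ f$ $\Rightarrow g$'' via $C\ootimes_B^{\mathbb{L}}C\simeq C\ootimes_B^{\mathbb{L}}(B\ootimes_A^{\mathbb{L}}C)\simeq C\ootimes_A^{\mathbb{L}}C\simeq C$ is exactly the paper's ``Conversely'' step using Lemma~\ref{lem:base_change_1}.

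The middle case --- deducing that $f$ is a homotopy epimorphism from $g$ and $g\circ f$ --- is where you rightly sense trouble, but the $\infty$-categorical principle you appeal to is not a real one: if $H\colon\fD(C)\to\fD(B)$ and $G\circ H\colon\fD(C)\to\fD(A)$ are fully faithful, it does \emph{not} follow that $G\colon\fD(B)\to\fD(A)$ is fully faithful; there is no ``section-like factorization'' available, and your tensor identities never isolate $B\ootimes_A^{\mathbb{L}}B$ from $C\ootimes_A^{\mathbb{L}}C$. In fact this implication is false. Take $A=C=k$ a field (with the fine structure), $B=k\times k$, $f\colon k\to k\times k$ the diagonal, and $g\colon k\times k\to k$ the first projection. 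Then $g$ is a homotopy epimorphism ($k$ is a projective $k\times k$-module and $k\otimes_{k\times k}k\cong k$), and $g\circ f=\id_k$ trivially is one, yet $(k\times k)\ootimes_k^{\mathbb{L}}(k\times k)\cong(k\times k)\otimes_k(k\times k)\cong k^4$ is not equivalent to $k\times k$, so $f$ is not a homotopy epimorphism. (This mirrors the failure of the same two-out-of-three for ordinary epimorphisms.) You should be aware that the paper's own proof has an identical gap here: its ``Therefore, the functor $\fD(A)\to\fD(B)$ is fully faithful if and only if the functor $\fD(A)\to\fD(C)$ \dots\ is'' implicitly uses that full faithfulness of the restriction along $g$ and of the composite restriction forces full faithfulness of the restriction along $f$, which the counterexample shows is not so. The correct conclusion is that only two of the three implications hold.
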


\begin{proof}
Suppose \(g\) is a homotopy epimorphism, then \(\fD(B) \to \fD(C)\) is fully faithful. Therefore, the functor \(\fD(A) \to \fD(B)\) is fully faithful if and only if the functor \(\fD(A) \to \fD(C)\) induced by \(g \circ f\) is fully faithful.  Conversely, if \(f\) and \(g \circ f\) are homotopy epimorphisms, then 
\[ C \cong C \ootimes_A^{\mathbb{L}} C \cong C \ootimes_B^{\mathbb{L}} B \ootimes_A^{\mathbb{L}} C \cong C \ootimes_B^{\mathbb{L}} C,\]
where the last isomorphism follows from Lemma \ref{lem:base_change_1}. 
\end{proof}

\begin{proposition}\label{prop:tensor_isocohomological}
Let \(f \colon A \to B\), \(g \colon C \to D\),   \(h \colon S \to T\),be homotopy epimorphisms in \(\mathsf{Comm}(\fM)\).  Then the induced morphism \(f \ootimes_{h}^{\mathbb{L}} g \colon A \ootimes_{S}^{\mathbb{L}} C \to B \ootimes_{T}^{\mathbb{L}} D\) is a homotopy epimorphism.
\end{proposition}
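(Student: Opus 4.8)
The plan is to exhibit $f\ootimes_h^{\mathbb L} g$ as a composite of homotopy epimorphisms and then invoke the two-out-of-three property, Proposition \ref{prop:two_out_of_three}. Implicit in the notation is the data that makes the source and target meaningful, which I would record first: $A$ and $C$ are $S$-algebras, $B$ and $D$ are $T$-algebras, and $f$, $g$ are compatible with $h$, meaning the composites $S\to A\to B$ and $S\to T\to B$ coincide and likewise $S\to C\to D$ agrees with $S\to T\to D$. The one auxiliary fact used throughout is that homotopy epimorphisms are stable under base change: if $p\colon A\to B$ is a homotopy epimorphism and $A\to A'$ is arbitrary in $\mathsf{Comm}(\fM)$, then $A'\to A'\ootimes_A^{\mathbb L} B$ is again a homotopy epimorphism, because
\[(A'\ootimes_A^{\mathbb L} B)\ootimes_{A'}^{\mathbb L}(A'\ootimes_A^{\mathbb L} B)\ \cong\ A'\ootimes_A^{\mathbb L}(B\ootimes_A^{\mathbb L} B)\ \cong\ A'\ootimes_A^{\mathbb L} B,\]
where the first isomorphism is associativity of $\ootimes^{\mathbb L}$ and the second uses $B\ootimes_A^{\mathbb L} B\cong B$.

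Next I would use the hypothesis on $h$ to collapse $T$ down to $S$. Since $T\ootimes_S^{\mathbb L} T\cong T$, for any $T$-algebra $E$ the canonical map $T\ootimes_S^{\mathbb L} E\to E$ is an equivalence (write $E\cong T\ootimes_T^{\mathbb L} E$, so that $T\ootimes_S^{\mathbb L} E\cong(T\ootimes_S^{\mathbb L} T)\ootimes_T^{\mathbb L} E\cong T\ootimes_T^{\mathbb L} E\cong E$). Applying this with $E=D$ and tensoring with $B$ over $T$ gives a natural equivalence $B\ootimes_S^{\mathbb L} D\weq B\ootimes_T^{\mathbb L} D$, under which $f\ootimes_h^{\mathbb L} g$ is identified with the morphism $A\ootimes_S^{\mathbb L} C\to B\ootimes_S^{\mathbb L} D$ induced by $f$ and $g$ over $S$; so it suffices to treat the case $S=T$, $h=\id$. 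In that case I factor this morphism as
\[A\ootimes_S^{\mathbb L} C\ \longrightarrow\ B\ootimes_S^{\mathbb L} C\ \longrightarrow\ B\ootimes_S^{\mathbb L} D,\]
where the first arrow is the base change of $f$ along the structure map $A\to A\ootimes_S^{\mathbb L} C$ (using $(A\ootimes_S^{\mathbb L} C)\ootimes_A^{\mathbb L} B\cong B\ootimes_S^{\mathbb L} C$) and the second is the base change of $g$ along $C\to B\ootimes_S^{\mathbb L} C$ (using $(B\ootimes_S^{\mathbb L} C)\ootimes_C^{\mathbb L} D\cong B\ootimes_S^{\mathbb L} D$). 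Both are homotopy epimorphisms by the base-change stability above, hence so is their composite by Proposition \ref{prop:two_out_of_three}.

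I do not expect a serious obstacle: once the compatibility data is written down, the argument is purely formal manipulation of homotopy pushouts. The only step requiring a moment's thought is the reduction using $h$ --- observing that a homotopy epimorphism $S\to T$ makes $T\ootimes_S^{\mathbb L}(-)$ act as the identity on $T$-algebras, which is exactly what trivialises the otherwise awkward third factor $B\ootimes_S^{\mathbb L} D\to B\ootimes_T^{\mathbb L} D$. After that one is left only with two base changes and the two-out-of-three property, both already in hand.
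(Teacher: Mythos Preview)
Your proof is correct but follows a different route from the paper. The paper dispatches the claim in a single line by a direct rearrangement of homotopy pushouts:
\[
(B \ootimes_{T}^{\mathbb L} D)\ootimes^{\mathbb L}_{A \ootimes_{S}^{\mathbb L} C}(B \ootimes_{T}^{\mathbb L} D)\ \cong\ (B \ootimes_{A}^{\mathbb L} B)\ootimes^{\mathbb L}_{T \ootimes_{S}^{\mathbb L} T}(D \ootimes_{C}^{\mathbb L} D)\ \cong\ B \ootimes_{T}^{\mathbb L} D,
\]
using the three hypotheses simultaneously. By contrast, you first use the hypothesis on $h$ to reduce to $S=T$, then factor the remaining map as a composite of two base changes of $f$ and $g$, and appeal to base-change stability (your derivation of which is essentially Proposition~\ref{prop:derived_base_change}) together with Proposition~\ref{prop:two_out_of_three}. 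Your approach is longer but more transparent about which hypothesis is doing what, and it only ever invokes the defining equivalence $B\ootimes_A^{\mathbb L} B\cong B$ one factor at a time; the paper's approach is terser but leans on the formal identity rearranging the double pushout, which some readers might want spelled out. Either way the content is the same formal manipulation of derived pushouts.
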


\begin{proof}
We have \[ (B \ootimes_{T}^{\mathbb{L}}  D) \ootimes^{\mathbb{L}}_{A \ootimes_{S}^{\mathbb{L}}C} (B \ootimes_{T}^{\mathbb{L}}  D)   \cong (B \ootimes_A^{\mathbb{L}} B) \ootimes_{T \ootimes_{S}^{\mathbb{L}}  T}^{\mathbb{L}} (D \ootimes_C^{\mathbb{L}} D) \cong B \ootimes_{T}^{\mathbb{L}} D.\]
\end{proof}

\begin{proposition}\label{prop:derived_base_change}
Let \(f \colon A \to B\) be a homotopy epimorphism in \(\mathsf{Comm}(\fM)\) and let \(g \colon A \to C\) be an arbitrary morphism in  \(\mathsf{Comm}(\fM)\). Then the morphism \(C \to C \ootimes_A^{\mathbb{L}} B\) induced by the derived pushout is a homotopy epimorphism.
\end{proposition}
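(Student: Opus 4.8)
Write $D := C \ootimes_A^{\mathbb{L}} B$, so that the morphism in question is the canonical structure map $C \to D$ coming from the derived pushout square with corners $A, B, C, D$. By the reformulation recalled at the beginning of this section, it suffices to show that $D \ootimes_C^{\mathbb{L}} D \cong D$ in $\fD(D)$, i.e.\ that the multiplication map $D \ootimes_C^{\mathbb{L}} D \to D$ is an equivalence.

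The quickest route is simply to recognize $C \to D$ as an instance of Proposition \ref{prop:tensor_isocohomological}. Taking the three homotopy epimorphisms there to be $\id_C \colon C \to C$, the given $f \colon A \to B$, and $\id_A \colon A \to A$ (identity morphisms are homotopy epimorphisms, since $X \ootimes_X^{\mathbb{L}} X \cong X$), the induced morphism $\id_C \ootimes_{\id_A}^{\mathbb{L}} f$ is precisely $C \cong C \ootimes_A^{\mathbb{L}} A \to C \ootimes_A^{\mathbb{L}} B = D$, and Proposition \ref{prop:tensor_isocohomological} then yields the claim at once.

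For a self-contained argument I would instead compute directly. The object $D = C \ootimes_A^{\mathbb{L}} B$ carries a canonical map from $B$, namely the inclusion of the second tensor factor, so Lemma \ref{lem:base_change_1}, applied to the homotopy epimorphism $f \colon A \to B$ together with this map $B \to D$, gives $B \ootimes_A^{\mathbb{L}} D \cong D$. On the other hand, associativity/base change for the derived relative tensor product gives
\[ D \ootimes_C^{\mathbb{L}} D \;=\; (C \ootimes_A^{\mathbb{L}} B) \ootimes_C^{\mathbb{L}} D \;\cong\; B \ootimes_A^{\mathbb{L}} (C \ootimes_C^{\mathbb{L}} D) \;\cong\; B \ootimes_A^{\mathbb{L}} D, \]
where $D$ is regarded as an $A$-algebra via $A \to C \to D$. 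Combining the two gives $D \ootimes_C^{\mathbb{L}} D \cong D$, as required.

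The only point needing care is bookkeeping: one must check that the chain of identifications above is compatible with the $D$-algebra structures (equivalently, that the resulting isomorphism $D \ootimes_C^{\mathbb{L}} D \cong D$ is the multiplication map), so that the comparison genuinely takes place in $\fD(D)$. This is a routine verification with the coherence isomorphisms of $\ootimes^{\mathbb{L}}$ in $\fM$, and I do not expect any real obstacle; the content of the statement is entirely formal once Lemma \ref{lem:base_change_1} and the base-change identity are in hand.
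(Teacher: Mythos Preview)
Your proposal is correct and your second, ``self-contained'' computation is essentially the paper's own proof: the paper writes the single line
\[(C \ootimes_A^{\mathbb{L}} B) \ootimes^{\mathbb{L}}_C (C \ootimes_A^{\mathbb{L}} B) \cong C \ootimes_A^{\mathbb{L}} B \ootimes_A^{\mathbb{L}} B \cong C \ootimes_A^{\mathbb{L}} B,\]
which is your associativity step followed by the hypothesis $B \ootimes_A^{\mathbb{L}} B \cong B$ applied directly (rather than via Lemma~\ref{lem:base_change_1}). Your first route, specialising Proposition~\ref{prop:tensor_isocohomological} to $\id_C \ootimes_{\id_A}^{\mathbb{L}} f$, is a valid alternative packaging that the paper does not mention; it is a nice observation but ultimately unwinds to the same one-line associativity computation.
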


\begin{proof}
We have \[(C \ootimes_A^{\mathbb{L}} B) \ootimes^{\mathbb{L}}_C (C \ootimes_A^{\mathbb{L}} B) \cong C \ootimes_A^{\mathbb{L}} B \ootimes_A^{\mathbb{L}} B \cong C \ootimes_A^{\mathbb{L}} B.\]
\end{proof}

Let \(\fC\) be a closed symmetric monoidal quasi-abelian category with enough flat projectives and strictly exact filtered colimits.  Equip the simplicial objects $s\fC$ with the projective \ref{def:projective} model structure and consider $\fC \subset s\fC$ and $\mathsf{Comm}(\fC) \subset \mathsf{Comm}(s\fC)$ in the usual way. Define a homotopy epimorphism in $\mathsf{Comm}(\fC)$ to be given by the same condition $B\ootimes^{\mathbb{L}}_A B \cong B$ in the derived category. Taking the cokernel in degree zero gives the condition $B\ootimes_A B \cong B$ which is equivalent to the condition that $A \to B$ is an epimorphism in $\mathsf{Comm}(\fC)$. This can also be seen by passing to connected components of mapping spaces in the description of homotopy epimorphisms in the beginning of this section. 
\begin{proposition}\label{prop:filtered_colimit_isocohomological}
Let \(\fC\) be a closed symmetric monoidal quasi-abelian category with enough flat projectives and strictly exact filtered colimits. Then the 
filtered colimit of homotopy epimorphisms in \(\mathsf{Comm}(\fC)\) is a homotopy epimorphism in  \(\mathsf{Comm}(\fC)\).
\end{proposition}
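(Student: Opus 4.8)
The plan is to reduce everything to the fact that the derived tensor product commutes with filtered colimits of rings, and then note that each term of the resulting colimit is trivialized by the homotopy-epimorphism hypothesis. Write the given filtered diagram as $k \mapsto (f_k \colon A_k \to B_k)$, indexed by a filtered category $K$, with colimit $f \colon A \to B$, where $A = \colim_k A_k$ and $B = \colim_k B_k$. Since $\ootimes$ is a left adjoint in each variable it commutes with all colimits; hence the forgetful functors $\mathsf{Comm}(\fC) \to \fC$ and $\mathsf{Comm}(s\fC) \to s\fC$ create filtered colimits, so $A$ and $B$ are computed on underlying objects and $\ootimes$ commutes with the filtered colimit in each slot.

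The first step is to show that
\[
B \ootimes_A^{\mathbb{L}} B \;\cong\; \colim_k\bigl( B_k \ootimes_{A_k}^{\mathbb{L}} B_k \bigr),
\]
compatibly with the multiplication maps to $B \cong \colim_k B_k$. For this I would choose, for each $k$, a \emph{functorial} cofibrant replacement of $B_k$ as a simplicial commutative $A_k$-algebra --- say the standard free simplicial resolution $P^{(k)}_\bullet \defeq (\mathrm{Sym}_{A_k}\circ U)^{\bullet+1}(B_k) \to B_k$, with $U$ the forgetful functor to $\mathsf{Mod}(A_k)$, or equally well the two-sided bar resolution $\Barre_\bullet(A_k,A_k,B_k)$. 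Functoriality makes $k \mapsto P^{(k)}_\bullet$ a filtered diagram, and since $\mathrm{Sym}$, the forgetful functor, the symmetric-group coinvariants and $\ootimes$ all commute with filtered colimits, $\colim_k P^{(k)}_\bullet$ is again the standard free simplicial resolution $P_\bullet$ of $B$ over $A$; in particular it is cofibrant over $A$. The hypothesis that filtered colimits are strictly exact guarantees that $\colim_k(P^{(k)}_\bullet \to B_k)$ is still a weak equivalence, so $P_\bullet \to B$ is a genuine resolution. Therefore
\[
B \ootimes_A^{\mathbb{L}} B \;\cong\; P_\bullet \ootimes_A B \;\cong\; \colim_k\bigl( P^{(k)}_\bullet \ootimes_{A_k} B_k \bigr) \;\cong\; \colim_k\bigl( B_k \ootimes_{A_k}^{\mathbb{L}} B_k \bigr),
\]
the middle isomorphism being once more commutation of $\ootimes$ and coequalizers with filtered colimits.

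Now each $f_k$ is a homotopy epimorphism, so $B_k \ootimes_{A_k}^{\mathbb{L}} B_k \cong B_k$ via the multiplication map; passing to the colimit yields $B \ootimes_A^{\mathbb{L}} B \cong \colim_k B_k \cong B$, and one checks that this isomorphism is the multiplication map of $f$. By the description of homotopy epimorphisms in $\mathsf{Comm}(\fC)$ recalled at the start of this section, $f$ is a homotopy epimorphism.

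The hard part --- the only step that is not formal --- is the claim that the filtered colimit of the $P^{(k)}_\bullet$ is still a cofibrant resolution computing $B \ootimes_A^{\mathbb{L}} B$, and not merely a simplicial object augmented to $B$. This is precisely what strict exactness of filtered colimits buys: it forces filtered colimits to preserve (strict) quasi-isomorphisms and exact sequences, so that both ``being a resolution'' and ``computing the derived tensor product'' survive passage to the colimit. What remains is bookkeeping --- choosing the resolutions so that they are honestly functorial in $k$ (the comonadic/bar construction is), and keeping track of the augmentations so that the final identification is the structure map $B \ootimes_A^{\mathbb{L}} B \to B$ rather than an abstract isomorphism.
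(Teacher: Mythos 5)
Your proof takes essentially the same route as the paper's, which is a two-line argument: since the monoidal product commutes with colimits and colimits preserve the relevant equivalences, one has
\[
B \ootimes_A^{\mathbb{L}} B \;\cong\; \colim_k\bigl(B_k \ootimes_{A_k}^{\mathbb{L}} B_k\bigr) \;\cong\; \colim_k B_k \;\cong\; B.
\]
What you do differently is make explicit the one non-formal ingredient that the paper's proof leaves implicit, namely that the derived tensor product commutes with filtered colimits. You realize it via the functorial cotriple/bar resolution and observe that the colimit of the resolutions is again the resolution of $\colim B_k$ over $\colim A_k$. This is a correct and careful way to fill the gap. One small point worth noting: the extra degeneracy of the cotriple resolution already makes each augmentation $P^{(k)}_\bullet \to B_k$ a chain homotopy equivalence in $\fC$, so the colimit $P_\bullet \to B$ is automatically a resolution without invoking strict exactness of filtered colimits; where that hypothesis is genuinely needed is the final step, where the maps $B_k \ootimes_{A_k} P^{(k)}_\bullet \to B_k$ coming from the homotopy epimorphism hypothesis are only quasi-isomorphisms (not chain homotopy equivalences), and one must know that their filtered colimit is still a quasi-isomorphism. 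The paper's invocation of ``preserves chain homotopy equivalences'' is a bit loose on exactly this point, so your use of the strictly exact filtered colimit hypothesis is the more precise justification.
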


\begin{proof}
Let \((f_i \colon A_i \to B_i)_{i \in I}\) be a filtered family of homotopy epimorphisms. Then \(B_i \ootimes_{A_i}^{\mathbb{L}} B_i \cong B_i\). Now use that taking colimits commutes over the monoidal structure, and preserves chain homotopy equivalences. 
\end{proof}

\begin{proposition}\label{prop:tensor_isocohomological_disc}
Let \(\fC\) be a closed symmetric monoidal quasi-abelian category with enough flat projectives and let \(f \colon A \to B\), \(g \colon C \to D\) be homotopy epimorphisms in \(\mathsf{Comm}(\fC)\). Assume that $A, B, C, D$ are all flat over $\id_{\fC}$. Then the induced morphism \(f \ootimes g \colon A \ootimes C \to B \ootimes D\) is a homotopy epimorphism.
\end{proposition}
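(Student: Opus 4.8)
The plan is to reduce the statement about the underived tensor product $\ootimes$ to the already-established derived statement, Proposition~\ref{prop:tensor_isocohomological}. The key point is that under the flatness hypotheses, the relevant underived tensor products agree with the derived ones, so the condition $B \ootimes_A B \cong B$ (which, as noted in the discussion preceding the proposition, is exactly what a homotopy epimorphism in $\mathsf{Comm}(\fC)$ means in this ``discrete'' setting) can be checked after passing to $\fD$. More precisely, I would first record the following consequences of flatness over $\id_\fC$: since $A$, $B$, $C$, $D$ are all flat as objects of $\fC$, the external tensor products computing $A \ootimes C$ and $B \ootimes D$ have no higher $\Tor$, so $A \ootimes C \cong A \ootimes^{\mathbb{L}} C$ and $B \ootimes D \cong B \ootimes^{\mathbb{L}} D$ in $\fD(\fC)$, and moreover $A \ootimes C$ is again flat over $\id_\fC$ (a tensor product of flats is flat, since $\ootimes$ preserves strict monomorphisms in each variable).

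Next I would observe that $B \ootimes D$ is flat \emph{as a module over} $A \ootimes C$: indeed, since $f$ and $g$ are homotopy epimorphisms with source and target flat over $\id_\fC$, Proposition~\ref{prop:tensor_isocohomological_disc}'s companion results — or a direct check — give that $B$ is flat over $A$ and $D$ is flat over $C$ (here one uses that for flat-over-$\id_\fC$ objects the bar resolution computing $B \ootimes_A^{\mathbb{L}} B$ consists of flat objects, and the homotopy-epimorphism condition forces this to be concentrated in degree $0$). Consequently the base change $\mathsf{Mod}_{A \ootimes C} \to \mathsf{Mod}_{B \ootimes D}$ along $M \mapsto (B \ootimes D) \ootimes_{A \ootimes C} M$ is already derived, i.e. $(B \ootimes D) \ootimes_{A \ootimes C} (B \ootimes D) \cong (B \ootimes D) \ootimes_{A \ootimes C}^{\mathbb{L}} (B \ootimes D)$ in $\fD(B \ootimes D)$. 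Now Proposition~\ref{prop:tensor_isocohomological}, applied with $h = \id_{\id_\fC}$, tells us that $f \ootimes^{\mathbb{L}} g \colon A \ootimes^{\mathbb{L}} C \to B \ootimes^{\mathbb{L}} D$ is a homotopy epimorphism, so $(B \ootimes^{\mathbb{L}} D) \ootimes_{A \ootimes^{\mathbb{L}} C}^{\mathbb{L}} (B \ootimes^{\mathbb{L}} D) \cong B \ootimes^{\mathbb{L}} D$. Combining with the flatness identifications of the previous step gives $(B \ootimes D) \ootimes_{A \ootimes C} (B \ootimes D) \cong B \ootimes D$, which by the remark preceding the proposition is exactly the assertion that $A \ootimes C \to B \ootimes D$ is a homotopy epimorphism in $\mathsf{Comm}(\fC)$.

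The main obstacle I expect is bookkeeping around the distinction between ``flat over $\id_\fC$'' and ``flat as a module,'' and making sure that all the tensor products in sight — external ($\ootimes$ over the unit), relative ($\ootimes_A$), and their derived versions — are genuinely identified without a hidden higher-$\Tor$ term sneaking in; in particular one must verify carefully that a homotopy epimorphism between objects flat over $\id_\fC$ is automatically a flat morphism, since that is what lets the bar-complex / derived computation collapse. Everything else is a diagram chase using the isomorphisms $(B \ootimes D) \ootimes_{A \ootimes C} (B \ootimes D) \cong (B \ootimes_A B) \ootimes (D \ootimes_C D)$ analogous to the one in the proof of Proposition~\ref{prop:tensor_isocohomological}, now valid at the underived level precisely because of flatness.
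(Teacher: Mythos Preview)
Your overall strategy --- reduce to Proposition~\ref{prop:tensor_isocohomological} via the flatness hypothesis --- is exactly what the paper does, but you have misread the definition and this creates a genuine gap. The discussion preceding the proposition does \emph{not} say that a homotopy epimorphism in $\mathsf{Comm}(\fC)$ is the underived condition $B \ootimes_A B \cong B$; it says that condition characterises an ordinary epimorphism, while a homotopy epimorphism in $\mathsf{Comm}(\fC)$ is still defined by the derived condition $B \ootimes_A^{\mathbb{L}} B \cong B$. So what you actually need to show is $(B \ootimes D) \ootimes_{A \ootimes C}^{\mathbb{L}} (B \ootimes D) \cong B \ootimes D$, not the underived analogue.

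Because you are aiming at the wrong target, you are forced into the claim that a homotopy epimorphism between objects flat over $\id_\fC$ is automatically a flat morphism. That claim is unjustified by the argument you sketch (knowing $B \ootimes_A^{\mathbb{L}} B$ is concentrated in degree $0$ says nothing about $B \ootimes_A^{\mathbb{L}} M$ for general $M$), and in fact it is false in the situations the paper cares about: the introduction explicitly notes that restriction maps in analytic geometry are typically not flat, which is precisely why homotopy epimorphisms are used instead of flat epimorphisms. Once you drop this detour, the proof is immediate: embed $\fC$ into the model category $s\fC$, use flatness over $\id_\fC$ only to identify $A \ootimes C \cong A \ootimes^{\mathbb{L}} C$ and $B \ootimes D \cong B \ootimes^{\mathbb{L}} D$, and then Proposition~\ref{prop:tensor_isocohomological} with $h = \id$ gives the derived condition directly. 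This is the paper's one-line proof.
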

\begin{proof}This is immediate from Proposition \ref{prop:tensor_isocohomological} upon taking $\fM$ to be the category of simplicial objects of $\fC$ with the projective model structure  \cite{Ben-Bassat-Kremnizer:Nonarchimedean_analytic}.
\end{proof}

\section{Analytification and strictness of diagonal sequences}\label{sec:An}

This section defines \textit{analytifications} of polynomial algebras in the category \(\fC = \mathsf{Ind}(\mathsf{Ban}_R)\). These describe the fundamental objects in analytic geometry such as disc algebras; which can in turn be obtained from free, polynomial type algebras from algebraic geometry. Schemes over $R$ in algebraic geometry can be built out of \textit{polynomial rings} \(R[x_1,\dots,x_n]\), where \(R\) is a commutative unital ring. Geometrically, this entails imposing relations (through vanishing loci of polynomials) on the \textit{affine \(n\)-space} \(\mathbb{A}_R^n \defeq \Spec(R[x_1,\dots,x_n])\). Now to get analytic spaces, we start with a commutative unital Banach ring that dictates the analysis on the resulting algebraic objects (that is, free commutative algebras and their quotients). Geometrically, the \textit{analytic affine spaces} correspond to the domain of convergence of formal power series over \(R\). Actually, different choices of domains give rise to different types of analytifications. For instance, when \(R = \Q_p\) with its \(p\)-adic norm, the Tate algebra encodes formal power series that are convergent on a polydisc of some radius. These are the affine spaces in rigid analytic geometry. On the other hand, in overconvergent geometry, one takes power series that are convergent on polydiscs of larger radii, allowed to depend on the power series. Our main result is that several different types of analytifications \(\mathbb{A}_R^{\mathrm{an},n} \hookrightarrow \mathbb{A}_R^n\) are homotopy monomorphisms and similarly, that the analytification of a closed subvariety of $\mathbb{A}_R^n$ as it sits in the closed subvariety itself is a homotopy monomorphism. 
\subsection{Symmetric and contracting symmetric algebras}\label{subsec:symmetric_algebra}

Let \(\fC\) be a closed, symmetric monoidal category with countable co-products. The forgetful functor \(\mathsf{Comm}(\fC) \to \fC\) has a left adjoint: \[S \colon \fC \to \mathsf{Comm}(\fC), \quad V \mapsto \mathsf{Sym}(V) = \amalg_{ n\geq 0} V^{\ootimes n} /S_n,\] which assigns to a module object its \textit{symmetric algebra}. The category we will mostly work in is \(\fC = \mathsf{Ind}(\mathsf{Ban}_R)\) for a Banach ring \(R\), though we sometimes also work in the concrete, full subcategory \(\mathsf{CBorn}_R\) of complete bornological \(R\)-modules. We first explicitly describe the symmetric algebra functor \(S_R \colon \Ind(\mathsf{Ban}_R) \to \mathsf{Comm}(\Ind(\mathsf{Ban}_R))\) on free and finite Banach modules, as the resulting algebras will be the first ones that we will analytify. 

Suppose \(V\) is a free Banach \(R\)-module of finite rank \(n\), then \(V = R^n\) as a Banach \(R\)-module. We can either view \(V\) as an object of \(\mathsf{Ind}(\mathsf{Ban}_R)\) via the canonical inclusion \(\mathsf{Ban}_R \hookrightarrow \Ind(\mathsf{Ban}_R)\), or we can view it as a complete bornological \(R\)-module. In terms of bornologies, since \(V\) is finite-dimensional, there is a bornological \(R\)-module isomorphism between \(V\) in the fine bornology and the von Neumann bornology, turning it into a complete bornological \(R\)-module.  Consequently, the symmetric algebra \[S_R(V) \cong R[x_1,\dots, x_n]\] when viewed as a bornological algebra with the fine bornology, is a complete bornological \(R\)-algebra. In fact, the fine bornology on \(S_R(V)\) is the unique bornology implementing the adjunction isomorphism \[\underline{\Hom}(S_R(V), A) \cong \underline{\Hom}(V,A),\] where \(A\) is an arbitrary commutative algebra object in \(\mathsf{CBorn}_R\).

We now realise the symmetric algebra \(S_R(V)\) as an inductive system of Banach \(R\)-modules. To do this, we need to see what the fine bornology on the polynomial algebra \(R[x_1,\dots, x_n]\) corresponds to when we view it as an object of \(\Ind(\mathsf{Ban}_R)\). Concretely, the fine bornology on the polynomial algebra is given as follows: a subset is bounded if and only if it is contained in the \(R\)-module \(\Fil_m(R[x_1,\dots,x_n])\) generated by monomials of total degree at most \(m\), that is, terms of the form \(x^\alpha = x_1^{\alpha_1}\ldots x_1^{\alpha_n}\) with \(\abs{\alpha} = \sum_{i=1}^n \abs{\alpha_i} \leq m\). It is easy to see that \((\Fil_m(R[x_1,\dots, x_n]))_{m \in \N}\) is an increasing filtration of \(R[x_1,\dots, x_n]\) by finitely generated \(R\)-submodules. Consequently, \[
R[x_1,\dots,x_n] = \varinjlim_m \Fil_m(R[x_1,\dots,x_n]) = \bigcup_m \Fil_m(R[x_1,\dots,x_n]).\]
In terms of inductive systems, \(S_R(V)\) can be viewed as a formal colimit of the finite-dimensional Banach \(R\)-modules \(\Fil_m(R[x_1,\dots,x_n])\). Since this is a strict inductive system (that is, the structure maps are inclusions), there is a bornological isomorphism between \(S_R(V)\) with the fine bornology, and the inductive limit bornology on the inductive system \((\Fil_m(R[x_1,\cdots,x_n]))_{m \in \N}\). Notice that we have defined $S_R(V)$ as an object of $\mathsf{Comm}(\Ind(\mathsf{Ban}_R))$ for any Banach ring $R$, and when $R$ is non-archimedean, the same $S_R(V)$ is an object of $\mathsf{Comm}(\Ind(\mathsf{Ban}^{na}_R))$.

It is easy to see that 
\begin{equation}\label{eqn:decomPoly}
R[x_1, \dots, x_n] \wotimes_{R} R[y_1, \dots, y_m]  \cong R[x_1, \dots, x_n, y_1, \dots, y_m]  
\end{equation}
and when $R$ is non-archimedean,
\begin{equation}\label{eqn:decomPolyNA}
R[x_1, \dots, x_n] \wotimes^{na}_{R} R[y_1, \dots, y_m]  \cong R[x_1, \dots, x_n, y_1, \dots, y_m].
\end{equation}




\begin{definition}[Rescalling a Banach ring]
For any real number $r > 0$ we define $R_r$ as the trivial $R$-module whose underlying abelian group agrees with that of $R$ and is promoted to a Banach $R$-module by defining a complete norm on it by the formula $\norm{x}_r \defeq r \abs{x}_{R}$.
\end{definition}

\begin{lemma}\label{lem:rescaling}
In the notation above, if $V=R_{r_1} \oplus \cdots \oplus R_{r_n}$, then $S_R(V) \cong S_{R}(R^{n})$.
\end{lemma}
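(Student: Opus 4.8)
The plan is to reduce the statement to the elementary observation that rescaling the norm on a finite free Banach $R$-module does not change its isomorphism class in $\mathsf{Ban}_R$, and then to invoke functoriality of the symmetric algebra functor $S_R$.

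First I would check that for every real $r>0$ the identity map on underlying $R$-modules is an isomorphism $R_r \xrightarrow{\cong} R$ in $\mathsf{Ban}_R$. It is $R$-linear by the very definition of $R_r$; it is bounded, since $\norm{x}_r = r\abs{x}_R$ shows it has operator norm at most $r$; and its inverse is bounded with operator norm at most $r^{-1}$. (It is not an isometry unless $r=1$, but that is irrelevant.)

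Second, since finite direct sums in the additive category $\mathsf{Ban}_R$ are biproducts, computed on underlying $R$-modules as the algebraic direct sum equipped with (any of the mutually equivalent) sum/max norms, the identity map on underlying modules is an isomorphism $R^n \xrightarrow{\cong} R_{r_1}\oplus\cdots\oplus R_{r_n} = V$ in $\mathsf{Ban}_R$: it is the biproduct of the $n$ isomorphisms $R \cong R_{r_i}$ from the previous step. Applying the fully faithful embedding $\mathsf{Ban}_R \hookrightarrow \mathsf{Ind}(\mathsf{Ban}_R)$ gives $R^n \cong V$ in $\mathsf{Ind}(\mathsf{Ban}_R)$, and the same argument works verbatim in $\mathsf{CBorn}_R$ and in the non-archimedean variants.

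Finally, $S_R$ is a functor — the left adjoint to the forgetful functor $\mathsf{Comm}(\mathsf{Ind}(\mathsf{Ban}_R)) \to \mathsf{Ind}(\mathsf{Ban}_R)$ — and functors preserve isomorphisms, so $S_R(V) \cong S_R(R^n)$ as commutative monoids. I expect essentially no obstacle here; the only point to keep in mind is that ``isomorphic'' must not be read as ``isometrically isomorphic'', since the rescaling maps are genuinely non-isometric. If a concrete description is wanted, one checks that the isomorphism sends the monomial $x^\alpha$ to $x^\alpha$ and multiplies its norm by $\prod_{i} r_i^{\alpha_i}$; restricted to each finite-dimensional piece $\Fil_m(R[x_1,\dots,x_n])$ of the defining inductive system this is a bounded distortion with bounded inverse, which again exhibits the required isomorphism of inductive systems.
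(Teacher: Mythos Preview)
Your proposal is correct and follows essentially the same approach as the paper: show that the identity map gives an isomorphism $R \cong R_r$ in $\mathsf{Ban}_R$ (bounded with bounded inverse, though not isometric), take direct sums to get $V \cong R^n$, and apply functoriality of $S_R$. Your version is more detailed, but the argument is the same.
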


\begin{proof}
Let \(r>0\) be arbitrary. The identity map on \(R\) induces an isomorphism \(R \cong R_r\) of Banach \(R\)-modules as \(\norm{x}_R= \frac{1}{r} \norm{x}_r\). Consequently, \(V \cong R^n\) as a Banach \(R\)-module. Now use the functoriality of the symmetric algebra construction.
\end{proof}


We use the rescalled versions of the Banach ring \(R\) to define the following versions of polydisc algebras:

\begin{definition}\label{defn:Tate}
For any Banach ring $R$ and $n$-tuple of positive real numbers $r=(r_1, \dots, r_n)$ the \textit{polydisc algebra of polyradius $r$} is defined by the subring
\[R\{\frac{x_1}{r_1}, \dots, \frac{x_n}{r_n} \}=\{\underset{J}\sum a_J x^J \in R[[x_1, \dots, x_n]]\ \ | \ \ \underset{J}\sum |a_J| r^J< \infty\}
\]
equipped with the norm $\norm{\underset{J}\sum a_J x^J} \defeq \underset{J}\sum |a_J| r^J$. This is a Banach \(R\)-algebra. 
\end{definition}
This is the symmetric algebra construction in the category $\mathsf{Ban}^{\leq 1}_{R}$ applied to the object of $\mathsf{Ban}_{R}$ given by $\underset{i=1, \dots, n}\coprod R_{r_i}=R_{r_1} \oplus \cdots \oplus R_{r_n}$.
The definition above works in the archimedean as well as non-archimedean context. 
It satisfies 
\begin{equation}\label{eqn:decomDisk}
 R \{ \frac{x_1}{r_1}, \dots, \frac{x_n}{r_n} \} \wotimes_{R} R \{\frac{y_1}{s_1}, \dots, \frac{y_n}{s_n} \} \cong R \{\frac{x_1}{r_1}, \dots, \frac{x_n}{r_n},  \frac{y_1}{s_1}, \dots, \frac{y_m}{s_m}\}.
 \end{equation}
If on the other hand one wants to work exclusively in the non-archimedean context, and only takes $R$ to be a non-archimedean, then one can read this article replacing everywhere the Tate algebra in place of the above polydisc algebra. The reader can also read the article as is, since the above construction can also be used when $R$ is non-archimedean.

\begin{definition}
Let \(R\) be a Banach ring and \(r>0\) as above. The consider the Banach $R$-module

\[R \gen{\frac{x_1}{r_1}, \dots, \frac{x_n}{r_n} } \defeq \{\underset{J}\sum a_J x^J \in R[[x_1, \dots, x_n]]\ \ | \ \ \underset{J}\lim |a_J| r^J=0\}
\]
equipped with the norm $\norm{\underset{J}\sum a_J x^J}=\underset{J}\sup |a_J| r^J$. 
\end{definition}
When $R$ is non-archimedean, this is the symmetric ring construction in the category $\mathsf{Ban}^{\leq 1, na}_{R}$ applied to the object given by $\underset{i=1, \dots, n}\coprod R_{r_i}=R_{r_1} \oplus \cdots \oplus R_{r_n}$. When $R$ is archimedean, we still consider and use this object, but cannot expect it to have any universal properties.  When $R$ is non-archimedean we call this Banach ring the \textit{Tate algebra of polyradius \(r>0\)}. It satisfies 
\begin{equation}\label{eqn:decomTate}
R \gen{\frac{x_1}{r_1}, \dots, \frac{x_n}{r_n} } \wotimes^{na}_{R} R \gen{\frac{y_1}{s_1}, \dots, \frac{y_n}{s_n} } \cong R \gen{\frac{x_1}{r_1}, \dots, \frac{x_n}{r_n},  \frac{y_1}{s_1}, \dots, \frac{y_m}{s_m}}.
\end{equation}

We can similarly define Banach abelian groups $M\{\frac{x_1}{r_1}, \dots, \frac{x_n}{r_n} \}$ and $M\langle\frac{x_1}{r_1}, \dots, \frac{x_n}{r_n} \rangle$ for any  Banach abelian group $M$. Notice that these are completions of the normed groups $M[\frac{x_1}{r_1}, \dots, \frac{x_n}{r_n}]$ and they are subgroups of $M[[x_1, \dots, x_n]]$.  The following is easy to check.
\begin{lemma}\label{lem:compare}Let $R$ be any Banach ring. The map 
\[R \{ \frac{x_1}{\rho_1}, \dots,  \frac{x_n}{\rho_n}\} \to R \langle \frac{x_1}{\rho_1}, \dots,  \frac{x_n}{\rho_n}\rangle
\]
taking each power series to itself
is well defined and bounded and for every $\rho'<\rho$ we similarly have a bounded inclusion 
\[ R \langle \frac{x_1}{\rho_1}, \dots,  \frac{x_n}{\rho_n}\rangle \to R \{ \frac{x_1}{\rho'_1}, \dots,  \frac{x_n}{\rho'_n}\}.
\]
\end{lemma}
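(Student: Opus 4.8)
The plan is to check directly that each of the two maps is a well-defined bounded $R$-linear map between the Banach $R$-modules in question; no deep input is needed, only two elementary estimates relating the $\ell^1$-type norm $\sum_J |a_J|\rho^J$ defining $R\{\tfrac{x_1}{\rho_1},\dots,\tfrac{x_n}{\rho_n}\}$ to the sup-type norm $\sup_J |a_J|\rho^J$ defining $R\langle \tfrac{x_1}{\rho_1},\dots,\tfrac{x_n}{\rho_n}\rangle$.

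First I would handle the map $R\{\tfrac{x_1}{\rho_1},\dots,\tfrac{x_n}{\rho_n}\} \to R\langle \tfrac{x_1}{\rho_1},\dots,\tfrac{x_n}{\rho_n}\rangle$. For $f = \sum_J a_J x^J$ with $\sum_J |a_J|\rho^J < \infty$, the convergence of a series of non-negative reals indexed by the countable set $\N^n$ means that for every $\varepsilon>0$ all but finitely many terms $|a_J|\rho^J$ are $<\varepsilon$; this is exactly the condition $\lim_J |a_J|\rho^J = 0$, so $f \in R\langle \tfrac{x_1}{\rho_1},\dots,\tfrac{x_n}{\rho_n}\rangle$, and since the map is the identity on coefficients it is $R$-linear and well defined. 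Boundedness is then the trivial bound $\sup_J |a_J|\rho^J \le \sum_J |a_J|\rho^J$, so the map has operator norm at most $1$.

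Next I would handle the inclusion $R\langle \tfrac{x_1}{\rho_1},\dots,\tfrac{x_n}{\rho_n}\rangle \to R\{\tfrac{x_1}{\rho'_1},\dots,\tfrac{x_n}{\rho'_n}\}$ for $\rho'<\rho$, meaning $\rho'_i<\rho_i$ for each $i$. Given $g = \sum_J a_J x^J$ with $M := \sup_J |a_J|\rho^J < \infty$, I would write $|a_J|(\rho')^J = (|a_J|\rho^J)\prod_{i=1}^n (\rho'_i/\rho_i)^{J_i}$ and sum over all multi-indices:
\[ \sum_{J \in \N^n} |a_J|(\rho')^J \;\le\; M \prod_{i=1}^n \sum_{k \ge 0} (\rho'_i/\rho_i)^k \;=\; M \prod_{i=1}^n \frac{1}{1-\rho'_i/\rho_i} \;<\; \infty,\]
using $\rho'_i/\rho_i < 1$. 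This shows $g \in R\{\tfrac{x_1}{\rho'_1},\dots,\tfrac{x_n}{\rho'_n}\}$ and that the inclusion is bounded, with constant $C = \prod_{i=1}^n (1-\rho'_i/\rho_i)^{-1}$.

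Both computations use only the triangle inequality for $\abs{\cdot}$ on $R$ and the positivity of the $\rho_i$ and $\rho'_i$, so they are valid for an arbitrary Banach ring, whether or not it is non-archimedean; in particular bounded $R$-linear maps are morphisms in $\mathsf{Ban}_R$, hence in $\mathsf{Ind}(\mathsf{Ban}_R)$. There is no genuine obstacle here: the only mildly delicate point is the first well-definedness claim (that an $\ell^1$-summable family of coefficient norms is automatically a null family), and the only bookkeeping is the multi-index geometric series in the second estimate.
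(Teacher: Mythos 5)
Your proof is correct. The paper gives no argument at all (it simply remarks that the lemma ``is easy to check''), and your two elementary estimates — that $\ell^1$-summability of the family $(|a_J|\rho^J)_J$ forces it to be a null family, and the termwise comparison $|a_J|(\rho')^J = |a_J|\rho^J \prod_i (\rho'_i/\rho_i)^{J_i}$ followed by the multi-geometric series bound $\prod_i (1-\rho'_i/\rho_i)^{-1}$ — are exactly the calculation any reader would supply.
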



\begin{lemma}\label{lem:poly_diag_strict}
Let \(R\) be a Banach ring. Then the sequence 
\[0 \to S_{R}(R \oplus R)  \to S_{R}(R \oplus R)  \to S_{R}(R) \to 0
\]
is strictly exact in the quasi-abelian category \(\mathsf{Ind}(\mathsf{Ban}_R)\). If $R$ is non-archimedean then the same is true in  \(\mathsf{Ind}(\mathsf{Ban}^{na}_R)\). 
\end{lemma}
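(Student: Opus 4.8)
The plan is to prove that the displayed sequence is \emph{split exact} already as a sequence of objects of \(\mathsf{Ind}(\mathsf{Ban}_R)\), by writing down a contracting homotopy all of whose components are bounded. A split short exact sequence in a quasi-abelian category is automatically strictly exact --- the first arrow is a kernel, hence a strict monomorphism; the second is a cokernel, hence a strict epimorphism; and their image and kernel coincide --- so this suffices, and the identical argument will apply in \(\mathsf{Ind}(\mathsf{Ban}^{na}_R)\) because everything in sight is a finite-rank free \(R\)-module or an \(R\)-linear map between such, for which \(\wotimes_R\), \(\wotimes^{na}_R\) and the notion of boundedness all agree.

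Write \(S_R(R\oplus R)=R[y,z]\) and \(S_R(R)=R[x]\), so the second map is \(\pi\colon R[y,z]\to R[x]\), \(f\mapsto f(x,x)\) (this is \(S_R\) applied to the codiagonal \(R\oplus R\to R\)), and the first is multiplication by \(y-z\). First I would make the linear change of coordinates \(u=y-z\), \(v=z\): the ring automorphisms \(y\mapsto u+v,\ z\mapsto v\) and \(u\mapsto y-z,\ v\mapsto z\) are mutually inverse and preserve total degree, hence restrict to mutually inverse \(R\)-linear --- a fortiori bounded --- isomorphisms of each finite-rank free piece \(\Fil_m\), and so identify \(R[y,z]\cong R[u,v]\) as objects of \(\mathsf{Ind}(\mathsf{Ban}_R)\). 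Under this identification the first map is multiplication by \(u\) and \(\pi\) becomes the reduction \(R[u,v]\to R[u,v]/(u)=R[v]\), so it is enough to show strict exactness of
\[0\longrightarrow R[u,v]\xrightarrow{\ \cdot u\ }R[u,v]\xrightarrow{\ u\mapsto 0\ }R[v]\longrightarrow 0.\]

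For this, let \(s\colon R[v]\hookrightarrow R[u,v]\) be the inclusion and let \(r\colon R[u,v]\to R[u,v]\) be the \(R\)-linear map \(\sum_{i\ge 0}f_i(v)u^i\mapsto\sum_{i\ge 0}f_{i+1}(v)u^i\). A direct check gives the homotopy identities \(\pi s=\id\), \(r\,(\cdot u)=\id\), \(rs=0\) and \((\cdot u)\,r+s\pi=\id\) --- equivalently, \(R[u,v]=uR[u,v]\oplus R[v]\) with these as the inclusion/projection data. Both \(s\) and \(r\) are \(R\)-linear and carry \(\Fil_m\) into \(\Fil_m\) (in fact \(r(\Fil_m)\subseteq\Fil_{m-1}\)), and an \(R\)-linear map between finite-rank free \(R\)-modules is automatically bounded; so \(s,r\) are morphisms in \(\mathsf{Ind}(\mathsf{Ban}_R)\) and the sequence, together with them, is a split short exact sequence in \(\mathsf{Ind}(\mathsf{Ban}_R)\), hence strictly exact. (Alternatively, one can present the whole sequence as the filtered colimit over \(m\) of the split short exact sequences \(0\to\Fil_{m-1}\to\Fil_m\to\Fil_m(R[v])\to 0\) of Banach \(R\)-modules and invoke Lemma \ref{lem:filtered_strict}.)

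The only point needing any care is the last one --- that the purely algebraic splitting is genuinely a splitting of ind-Banach modules --- and here it is immediate because every module involved is a filtered colimit of finite-rank free \(R\)-modules along degree-preserving inclusions. This is precisely the step that becomes substantive in the analytic analogues (Lemmas \ref{lem:TateDiagStrict}, \ref{lem:daggerdaigstrict}, \ref{lem:HoloStrict}), where one must pass to a slightly smaller polyradius, resp. use the ultrametric inequality, to keep the homotopy \(r\) bounded.
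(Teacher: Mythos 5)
Your proof is correct, and it takes a genuinely different route from the one in the paper. The paper proves strictness of the multiplication-by-$(y-z)$ map by a direct $\ell^1$ estimate on each degree-$\leq d$ piece: for $f$ of degree at most $d$ with $f(z,z)=0$ one shows $\norm{f/(y-z)} \leq d^3\norm{f}$, so the inverse of $(y-z)$ on its image is bounded on each $\Fil_d$; the surjection part is handled by the obvious section. You instead perform the degree-preserving (hence automatically bounded on each $\Fil_m$) linear change of coordinates $u=y-z$, $v=z$, after which the complex becomes $R[u,v]\xrightarrow{\cdot u}R[u,v]\to R[v]$, and you write down the bounded contracting homotopy $(s,r)$ showing that the sequence is actually \emph{split} exact as ind-Banach modules — a stronger statement, and one that needs no numerical estimate at all, since every $R$-linear map between finite-rank free $R$-modules is bounded. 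Both arguments are sound. What the paper's estimate buys is a template: the same computation of $f\mapsto f/(y-z)$, with the $\ell^1$ sum over coefficients replaced by the appropriate $\sup$ or weighted sum, is what gets reused in Lemmas \ref{lem:formal_poly_diag_strict}, \ref{lem:TateDiagStrict} and \ref{lem:HoloStrict}. What your change-of-variables argument buys is conceptual transparency here (and, as you note, some insight into exactly why the disc algebras fail: over an archimedean $R$ the substitution $y\mapsto u+v$ is not bounded on $R\{y/\rho,z/\rho\}$, whereas over a non-archimedean $R$ it is an isometry of the Tate algebras — which is indeed another way to see Lemma \ref{lem:TateDiagStrict}). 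One small remark: the paper's appeal to Lemma \ref{lem:rescaling} at the start of its proof is really just justifying the identification $S_R(R\oplus R)\cong R[y,z]$ etc.; your proof makes the same identification silently, which is harmless.
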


\begin{proof}

By Lemma \ref{lem:rescaling}, the above short exact sequence is equivalent to the short exact sequence

\[0 \to R[y,z]  \stackrel{y-z}\to R[y,z]  \to R[x] \to 0
\]
which we need to show is strict. 
\noindent Here we view the polynomial algebra as an object of \(\mathsf{Ind}(\mathsf{Ban}_R)\), using the fine ind-structure mentioned previously. We need to show that the map \[(y-z)R[y,z] \ni f \mapsto \frac{f(y,z) - f(z,z)}{y-z} \in R[y,z]\] is bounded in the fine ind-structure. In other words, we need to find a constant $C$ depending on $d$ such that for any polynomial $f(y,z)$ of degree less than or equal to $d$ which satisfies $f(z,z)=0$, we have $\norm{f(y,z)/(y-z)} \leq C \norm{f}$. Let $f(y,z) = \sum_{i,j} a_{i,j}y^iz^j$, so that the norm $\norm{f} = \sum_{i,j} |a_{i,j}|$ of \(f\) is a finite sum of the norms of its coefficients. Now

\[\frac{f(y,z)-f(z,z)}{y-z} = \sum_{i>0 , j \geq 0} \sum_{t=0}^{i-1}a_{i,j} y^{i-1-t}z^{j+t}=  \sum_{k \geq 0 , l \geq 0}   (\sum^l_{t=0} a_{k+1+t,l-t}) y^k z^l
\]
has norm at most 
\begin{multline*}
\sum_{k \geq 0 , l \geq 0}   |(\sum^l_{t=0} a_{k+1+t,l-t})|\leq \sum_{k \geq 0 , l \geq 0}(l +1)\max_{t=0}^l |a_{k+1+t,l-t}| \\ 
= \sum_{k \geq 0 , l \geq 0} (l+1)|a_{k+1+t_{k,l},l-t_{k,l}}|\leq d^3||f||
\end{multline*}
\end{proof}
\begin{definition}
For $V=R^{\oplus n}$ we define $\widehat{S_{R}}(V) \cong \underset{m}\lim S_{R}(V)/I^m$ where $I$ is the ideal $(x_1, \dots, x_n) \subset R[x_1, \dots, x_n] \cong  S_{R}(V)$.
\end{definition}
It is an object of $\mathsf{Comm}(\Ind(\mathsf{Ban}_R))$ for any Banach ring $R$, and when $R$ is non-archimedean, it is also an object of $\mathsf{Comm}(\Ind(\mathsf{Ban}^{na}_R))$.
When $n=1$, $V=R$, this takes on a particularly simple form as $ \displaystyle\prod_{i=0}^{\infty} R$, which is nuclear and metrizable over $R$ because $R$ is and these properties are preserved by countable products  \cite{ben2020fr}. Therefore, functor $(-)\wotimes_{R}  \displaystyle\prod_{i=0}^{\infty} R$ therefore distributes over countable products \cite{ben2020fr}. In general, it easy to see that, $\widehat{S_{R}}(V\oplus W) \cong \widehat{S_{R}}(V) \wotimes_{R} \widehat{S_{R}}(W)$ and that in general, $\widehat{S_{R}}(V)$ is nuclear (hence flat) and metrizable over $R$ and that the completed tensor with it  distributes over countable products  \cite{ben2020fr}. For simplicity, we write $R[[x]]$ for $\widehat{S_{R}}(R)$. The product structure is given by the usual formulae for multiplying formal power series and the \(\mathsf{Ind}(\mathsf{Ban}_R)\) structure is determined by 
\begin{equation}\label{eqn:fps}
R[[x]] \cong  \displaystyle\prod_{i=0}^{\infty}  R \cong ``\underset{\psi \in \Psi(\mathbb{N})}\colim" \prod_{i \in \mathbb{N}}{}^{\leq 1} R_{\psi(i)^{-1}}.
\end{equation}
We similarly write $R[[x,y]] \cong R[[x]] \wotimes_{R} R[[y]]$ for $\widehat{S_{R}}(R \oplus R) \cong  \widehat{S_{R}}(R) \wotimes_{R} \widehat{S_{R}}(R)$ etc. and so 
\begin{equation}\label{eqn:twofps}R[[x,y]] \cong  \displaystyle\prod_{i,j=0}^{\infty} R  \cong ``\underset{\psi \in \Psi(\mathbb{N}^2)}\colim" \prod_{i,j \in \mathbb{N}}{}^{\leq 1} R_{\psi(i,j)^{-1}}.
\end{equation}
We have that 
\begin{equation}\label{eqn:decomForm}
R[[x_1, \dots, x_n]] \wotimes_{R} R[[y_1, \dots, y_m]]  \cong R[[x_1, \dots, x_n, y_1, \dots, y_m]]  
\end{equation}
and when $R$ is non-archimedean,
\begin{equation}\label{eqn:decomFormNA}
R[[x_1, \dots, x_n]] \wotimes^{na}_{R} R[[y_1, \dots, y_m]]  \cong R[[x_1, \dots, x_n, y_1, \dots, y_m]]  
.
\end{equation}

\begin{lemma}\label{lem:formal_poly_diag_strict}
Let \(R\) be a Banach ring and \(r>0\) an arbitrary positive real number. Then the sequence 
\[0 \to \widehat{S_{R}}(R) \wotimes_{R} \widehat{S_{R}}(R) \to  \widehat{S_{R}}(R) \wotimes_{R} \widehat{S_{R}}(R)  \to \widehat{S_{R}}(R) \to 0
\]
is strictly exact in the quasi-abelian category \(\mathsf{Ind}(\mathsf{Ban}_R)\). If $R$ is non-archimedean then the same is true in  \(\mathsf{Ind}(\mathsf{Ban}^{na}_R)\). 
\end{lemma}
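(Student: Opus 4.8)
The plan is to run the argument of Lemma~\ref{lem:poly_diag_strict}, now using the product description \eqref{eqn:twofps} of $\widehat{S_{R}}(R)\wotimes_{R}\widehat{S_{R}}(R)$ in place of the fine ind-structure on $R[y,z]$. Write $R[[y,z]]$ for this algebra and $R[[x]]$ for $\widehat{S_{R}}(R)$ as in \eqref{eqn:fps}; the two maps of the sequence are $\mu$, multiplication by $y-z$, and the diagonal (multiplication) map $\pi\colon f\mapsto f(x,x)$, which sends $y,z\mapsto x$. First I would record that $\pi$ has the section $\iota\colon R[[x]]\to R[[y,z]]$, $\iota(f)(y,z)=f(y)$, and that $\iota$ is bounded: by \eqref{eqn:fps} and \eqref{eqn:twofps} a $\phi$-bounded subset of $R[[x]]$ lands in the $\psi$-bounded subset of $R[[y,z]]$ with $\psi(i,j)=\phi(i)$. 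Since $\pi\circ\iota=\id$, the map $\pi$ is a split, in particular strict, epimorphism, so $0\to\ker\pi\to R[[y,z]]\xrightarrow{\pi}R[[x]]\to 0$ is strictly exact; it then suffices to show that $\mu$ is a strict monomorphism with image $\ker\pi$.

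For this I would produce a bounded left inverse. As in Lemma~\ref{lem:poly_diag_strict}, the $R$-linear map
\[\Gamma\colon\ \sum_{i,j}a_{ij}\,y^iz^j\ \longmapsto\ \sum_{k,l}\Bigl(\sum_{t=0}^{l}a_{k+1+t,\,l-t}\Bigr)y^kz^l\]
is well defined (each coefficient is a finite sum) and satisfies $\Gamma\circ\mu=\id$, while $\mu$ is injective with image exactly $\ker\pi$. So the whole content is that $\mu$ and $\Gamma$ are \emph{bounded} for the colimit structure \eqref{eqn:twofps}. For $\mu$ this is routine: it carries $\prod^{\le 1}_{i,j}R_{\psi(i,j)^{-1}}$ into a rescaling of $\prod^{\le 1}_{i,j}R_{\psi'(i,j)^{-1}}$ with $\psi'(i,j)\defeq\psi(i-1,j)+\psi(i,j-1)$ (with the obvious conventions at $i=0$ or $j=0$). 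For $\Gamma$ one estimates: if $\abs{a_{ij}}_{R}\le M\,\psi(i,j)$ for all $i,j$, then $\bigl\lvert\sum_{t=0}^{l}a_{k+1+t,\,l-t}\bigr\rvert_{R}\le CM\,(l+1)\max_{0\le t\le l}\psi(k+1+t,\,l-t)$, so $\Gamma$ carries that bounded set into the $\psi''$-bounded set with $\psi''(k,l)\defeq(l+1)\max_{0\le t\le l}\psi(k+1+t,\,l-t)$. Hence $\Gamma$ is a morphism, $\mu$ is a split (thus strict) monomorphism with image $\ker\pi$, its cokernel is $R[[x]]$ via $\pi$, and the sequence is strictly exact.

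I expect the boundedness of $\Gamma$ to be the only real point: it is where the uniform estimate ``$\le d^{3}\norm{f}$'' of the polynomial case is replaced by the transformation $\psi\mapsto\psi''$ of decay functions staying inside the system \eqref{eqn:twofps}. The non-archimedean case goes through verbatim with $\wotimes^{na}_{R}$ in place of $\wotimes_{R}$: by the discussion preceding \eqref{eqn:fps} the relevant countable products, hence the objects $R[[x]]$ and $R[[y,z]]$, are the same, and the norm estimates only improve---one may drop the factor $l+1$ and take $\psi''(k,l)=\max_{0\le t\le l}\psi(k+1+t,\,l-t)$. Finally the parameter $r$ in the statement plays no role, since $\widehat{S_{R}}(R_{r})\cong\widehat{S_{R}}(R)$.
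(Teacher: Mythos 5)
Your proof is correct and takes essentially the same approach as the paper: the paper likewise bounds the division-by-$(y-z)$ operator by constructing a dominating function $\phi(k,l)=(l+1)\prod_{s=0}^{l}\psi(k+1+s,l-s)$ from a given $\psi$, and notes that $\pi$ has the bounded section $\iota$. Your formulation as a globally defined bounded left inverse $\Gamma$ (using $\max$ in place of $\prod$, which only sharpens the bound) is a cosmetic repackaging of the same estimate, and the observation that the product norms coincide with the non-archimedean ones so the same computation handles both cases matches the paper.
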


\begin{proof}We will use the descriptions in equations (\ref{eqn:fps}) and (\ref{eqn:twofps}) together with the characterization in \ref{lem:filtered_strict}.  In order to see the strictness of the first map, consider any map $\psi: \mathbb{N}^2 \to \mathbb{Z}_{\geq 1}$ and any formal power series $f= \underset{(i,j) \in \mathbb{N}^2}\sum a_{i,j}y^iz^j$ which vanishes on the diagonal and satisfies \[
C=\underset{(i,j) \in \mathbb{N}^2}\sup \frac{|a_{i,j}|}{\psi(i,j)}< \infty.\] Then we can define a map $\phi: \mathbb{N}^2 \to \mathbb{Z}_{\geq 1}$ using the formula 
\[\phi(k,l)= (l+1) \eta(k,l) 
\]
where
\[\eta(k,l)= \underset{s=0, \dots, l}\prod \psi (k+1+s, l-s)
.\]
Then the power series $g =  \underset{(k,l) \in \mathbb{N}^2}\sum b_{k,l}y^kz^l \in R[[y,z]]$ defined by $f/(y-z)$ satisfies 
\[\underset{(k,l) \in \mathbb{N}^2}\sup \frac{|b_{k,l}|}{\phi(k,l)}= \underset{(k,l) \in \mathbb{N}^2}\sup \frac{|\underset{t=0, \dots, l}\sum a_{k+1+t,l-t}|}{\phi(k,l)}\leq \underset{(k,l) \in \mathbb{N}^2}\sup \underset{t=0, \dots, l}\sup \frac{ |a_{k+1+t,l-t}|}{\eta(k,l)}.
\]
Therefore 
\[\underset{(k,l) \in \mathbb{N}^2}\sup \frac{|b_{k,l}|}{\phi(k,l)} \leq \underset{(k,l) \in \mathbb{N}^2}\sup \underset{t=0, \dots, l}\sup \frac{ |a_{k+1+t,l-t}|}{\psi (k+1+t, l-t)} \leq C
\]
and this concludes the proof of strictness of the bounded multiplication by $(y-z)$ map on $R[[y,z]]$. It is trivial to see that the second map is a colimit of bounded maps with bounded sections.
\end{proof}
\begin{lemma}\label{lem:TateDiagStrict} Let \(R\) be a Banach ring. Then
for any $r>0$, the diagonal sequence
\[0 \to R \langle \frac{y}{r},\frac{z}{r}\rangle \to  R\langle \frac{y}{r},\frac{z}{r}\rangle \to R\langle\frac{x}{r}\rangle \to 0
\]
is a strictly exact  in  \(\mathsf{Ind}(\mathsf{Ban}_R)\). If $R$ is non-archimedean then the same is true in  \(\mathsf{Ind}(\mathsf{Ban}^{na}_R)\). 
\end{lemma}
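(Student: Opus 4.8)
The plan is to run the same three-step scheme used for Lemmas~\ref{lem:poly_diag_strict} and~\ref{lem:formal_poly_diag_strict}. Abbreviate $T \defeq R\langle \frac{y}{r},\frac{z}{r}\rangle$ and $T_1 \defeq R\langle \frac{x}{r}\rangle$, let $\pi\colon T\to T_1$ be the map $\pi(f)(x)=f(x,x)$, and let $y-z\colon T\to T$ denote multiplication by $y-z$, which clearly preserves $T\subset R[[y,z]]$ and is bounded. I would prove separately that (i) $\pi$ is a strict epimorphism, (ii) $y-z$ is a strict monomorphism, and (iii) $\im(y-z)=\ker(\pi)$; together these give strict exactness. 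The one structural difference from the two earlier cases is that $T$ is a genuine Banach module, not a formal colimit indexed by a poset $\Psi(\N^2)$, so the estimates feeding into (ii) and (iii) must produce a constant that does \emph{not} depend on the degree; securing such a uniform constant is exactly where the non-archimedean hypothesis does the work.

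For (i): the coefficient of $x^m$ in $\pi(f)$ is a sum of degree-$m$ coefficients of $f$, so the ultrametric inequality gives $\|\pi(f)\|\le\|f\|$, and $\pi$ is bounded of norm $\le 1$; moreover the inclusion $\iota\colon T_1\to T$, $\iota(g)(y,z)=g(y)$, is an isometry with $\pi\circ\iota=\id_{T_1}$. A morphism of a quasi-abelian category that admits a section is a split, hence strict, epimorphism, which settles (i).

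For (ii) and (iii): multiplication by $y-z$ is injective on $R[[y,z]]$ (since $y-z$ is monic in $y$ over $R[z]$), hence on $T$, and its image visibly lies in $\ker\pi$. It remains to invert it on $\ker\pi$ with a uniform bound. Given $g=\sum a_{ij}y^iz^j\in\ker\pi$, decompose $g=\sum_m g_m$ into homogeneous parts; membership in $\ker\pi$ says precisely that the coefficients $\alpha_0,\dots,\alpha_m$ of $g_m$ satisfy $\sum_i\alpha_i=0$ for every $m$. Solving $g=(y-z)h$ formally, a telescoping computation — consistent at the top index exactly because $\sum_i\alpha_i=0$ — gives that the homogeneous part $h_{m-1}$ of $h$ has coefficients $\beta_j=-(\alpha_0+\cdots+\alpha_j)$ for $0\le j\le m-1$. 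The non-archimedean inequality now yields $|\beta_j|\le\max_i|\alpha_i|$, whence $\|h_{m-1}\|=r^{m-1}\max_j|\beta_j|\le r^{-1}\,r^m\max_i|\alpha_i|=r^{-1}\|g_m\|$. Since $g\in T$ forces $\|g_m\|\to 0$, also $\|h_{m-1}\|\to 0$, so $h\in T$ with $\|h\|\le r^{-1}\|g\|$. Thus $y-z$ is a bounded bijection of $T$ onto $\ker\pi$ with bounded inverse; as $\ker\pi$ is closed (kernel of a bounded map), the natural map from the coimage of $y-z$ to its image is an isomorphism and $\im(y-z)=\ker\pi$, which gives (ii) and (iii). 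The same argument runs verbatim in $\mathsf{Ind}(\mathsf{Ban}^{na}_R)$, and since it only asserts that a map of Banach modules has closed image together with a bounded inverse onto that image, the two conclusions are equivalent.

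The main obstacle is precisely the uniform estimate in the previous paragraph. Over a general, possibly archimedean, Banach ring the only bound available for a partial sum $\alpha_0+\cdots+\alpha_j$ is $(j+1)\max_i|\alpha_i|$, and the resulting degree-dependent constant cannot be removed: it destroys both the boundedness of $\pi$ and the bounded-below estimate for $y-z$ — in contrast to Lemma~\ref{lem:poly_diag_strict}, where a degree-dependent constant is harmless because the polynomial algebra is a colimit over degree. This is exactly why, following the conventions of the introduction, the Tate algebras are used with their non-archimedean norm and $\wotimes^{na}_R$, and $R\langle\frac{y}{r},\frac{z}{r}\rangle$ in the statement should be read accordingly.
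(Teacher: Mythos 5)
Your proof of the non-archimedean case is correct, and it is essentially the same computation as the paper's: write $g\in\ker\pi$ in homogeneous pieces, observe that the coefficients of $h=g/(y-z)$ in each degree are the partial sums $\beta_j=-(\alpha_0+\cdots+\alpha_j)$ of those of $g$, and use the ultrametric inequality to obtain the uniform bound $\norm{h}\le r^{-1}\norm{g}$. In fact your treatment is tighter than the one in the paper. As printed, the paper's proof estimates $\abs{c_{K,L}}$ by $\sum_{t=0}^{L}\abs{a_{K+1+t,L-t}}\le(L+1)\sup_{k,l}\abs{a_{k,l}}$, i.e.\ it invokes the \emph{archimedean} triangle inequality and is left with a degree-dependent factor $(L+1)$ (it also drops the weights $r^{k+l}$). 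A factor depending on where the supremum is attained does not establish boundedness of the inverse, so the argument only becomes a proof once one replaces $\le\sum$ by the ultrametric $\le\max$, exactly as you did. Your handling of the strict epimorphism via the section $\iota$ and the identification $\im(y-z)=\ker\pi$ is also correct and fills a step the paper skips with "we only need to show\ldots".

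Your instinct about the archimedean case is also right, and can be sharpened: the lemma is in fact \emph{false} when the norm on $R$ is archimedean, and the failure is more basic than a non-uniform constant in the $(y-z)$-inverse. Already the evaluation $\pi\colon R\gen{\frac{y}{r},\frac{z}{r}}\to R\gen{\frac{x}{r}}$, $f\mapsto f(x,x)$, is not bounded there. For example over $R=\C$ with $r=1$, the series $f=\sum_{i,j}a_{i,j}y^iz^j$ with $a_{i,j}=(i+j+1)^{-1/2}$ lies in $\C\gen{y,z}$ and has $\norm{f}=1$, but the coefficient of $x^m$ in $f(x,x)$ is $(m+1)^{1/2}$, which does not tend to $0$, so $\pi(f)\notin\C\gen{x}$; analogous examples exist over $\Z$ and for any $r>0$. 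So the sequence is not even a complex of bounded maps, and the opening clause "Let $R$ be a Banach ring" overstates what can be proved. This is harmless for the paper's later uses --- Corollary~\ref{cor:Tate_isocohomological} assumes $R$ non-archimedean, and the citation in Lemma~\ref{lem:daggerdaigstrict} can be repaired by shrinking the radius, since $R\gen{\frac{y}{\rho},\frac{z}{\rho}}\to R\gen{\frac{x}{\rho'}}$ and the inverse of $y-z$ landing in $R\gen{\frac{y}{\rho'},\frac{z}{\rho'}}$ for $\rho'<\rho$ \emph{are} bounded archimedean-ly --- but you are correct that the Tate-algebra lemma itself should be read under the non-archimedean hypothesis.
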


\begin{proof}
Again, we only need to show that the map \((y-z)R\gen{\frac{y}{r},\frac{z}{r}} \to R\gen{\frac{y}{r},\frac{z}{r}}\), \(f \mapsto \frac{f(y,z) - f(z,z)}{y-z}\) is a bounded \(R\)-linear map.  Consider \(f = \underset{i,j \geq 0}\sum a_{i,j} x^i y^j\) with norm \(\norm{f} = \underset{i,j \geq 0}\sup \abs{a_{i,j}} r^{i+j}\). Using the estimates from Lemma \ref{lem:poly_diag_strict}, we see that \(\frac{f(y,z)-f(z,z)}{y-z}\) has norm given by

\begin{multline*}
\norm{\sum_{k,l \geq 0} c_{k,l} y^k z^l} = \underset{k,l \geq 0}\sup \abs{c_{k,l}} = \abs{c_{K,L}} = \abs{\sum_{t=0}^L a_{K+1+t, L+t}} \\
 \leq \sum_{t=0}^L \abs{a_{K+1+t, L- t}} \leq (L+1) \sup_{k,l} \abs{a_{k,l}} \leq (L+1) \norm{f},
\end{multline*}

\noindent where \(c_{k,l} = \sum_{t=0}^l a_{k+1+ t, l-t}\), and \((K,L)\) satisfy $|c_{K,L}|=\underset{k,l \geq 0}\sup \abs{c_{k,l}}$. The pair $(K,L)$ exists because the $\underset{k,l}\lim \abs{c_{k,l}}=0$  
\end{proof}
\begin{remark}\label{rem:wondered}The reader may have wondered why we have not stated a lemma similar to Lemma \ref{lem:TateDiagStrict} for the disk algebras $R \{\frac{x}{r}\}$. However the map 
\[\mathbb{Z} \{x, y\} \stackrel{x-y}\longrightarrow \mathbb{Z} \{x, y\}
\]
is not strict because 
\[(x-y)(x^{n-1}+  x^{n-2} y+ x^{n-3} y^2 + \cdots + x y^{n-2} + y^{n-1})= x^{n}-y^{n}
\]
meaning the norm can shrink by an arbitrarily high factor, in this case it shrinks from $n$ to $2$. As explained in Example \ref{ex:pad} the disk algebras over $\mathbb{Z}$ or even over the complex numbers, do not seem to participate in homotopy epimorphisms.
\end{remark}

Let $V=R_{r_1} \oplus \cdots \oplus R_{r_n}$.
Let \(S_R^{\leq 1}(V)= R \{ \frac{x_1}{r_1}, \dots,  \frac{x_n}{r_n}\}\) denote the contracting symmetric algebra - this is a free object in the category \(\mathsf{Comm}(\mathsf{Ban}^{\leq 1}_R)\). It is shown in \cite{Bambozzi-Ben-Bassat:Dagger}, that for every $\rho < r$ that the natural morphism $R \{ \frac{x_1}{r_1}, \dots,  \frac{x_n}{r_n}\}^\dagger \to R \{ \frac{x_1}{\rho_1}, \dots,  \frac{x_n}{\rho_n}\}^\dagger $ is a homotopy epimorphism for any Banach ring $R$. If $R$ is non-archimedean, then for every $\rho < r$, the natural morphism  $R \langle \frac{x_1}{r_1}, \dots,  \frac{x_n}{r_n}\rangle \to R \langle \frac{x_1}{\rho_1}, \dots,  \frac{x_n}{\rho_n}\rangle$ is a homotopy epimorphism.

\begin{lemma}\label{lem:colims}
Let \(R\) be any Banach ring, archimedean or not. Then we have
$
\underset{\rho>r}\colim R \{ \frac{x_1}{\rho_1}, \dots,  \frac{x_n}{\rho_n}\} \cong \underset{\rho>r}\colim R \langle \frac{x_1}{\rho_1}, \dots,  \frac{x_n}{\rho_n}\rangle
$ in  \(\mathsf{Ind}(\mathsf{Ban}_R)\). If \(R\) is non-archimedean the same can be said in \(\mathsf{Ind}(\mathsf{Ban}^{na}_R)\).
\end{lemma}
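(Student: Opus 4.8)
The plan is to realise the two inductive systems in question as mutually cofinal sub-diagrams of a single filtered diagram, so that their formal colimits are literally the same object of $\Ind(\mathsf{Ban}_R)$. Write $D(\rho) \defeq R\{\tfrac{x_1}{\rho_1},\dots,\tfrac{x_n}{\rho_n}\}$ and $E(\rho) \defeq R\gen{\tfrac{x_1}{\rho_1},\dots,\tfrac{x_n}{\rho_n}}$ for $\rho = (\rho_1,\dots,\rho_n)$ with each $\rho_i > r_i$. Both are subgroups of $R[[x_1,\dots,x_n]]$, the structure maps of each system are the identity on power series, and by Lemma \ref{lem:compare} there are bounded maps $D(\rho) \to E(\rho)$ for every $\rho > r$ and $E(\rho) \to D(\rho')$ for every $r < \rho' < \rho$, again given by the identity on power series. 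I would then consider the diagram $F$ whose objects are all the $D(\rho)$ and all the $E(\rho)$ for $\rho > r$, and whose morphisms are all the identity-on-power-series bounded maps between them provided by these inclusions together with Lemma \ref{lem:compare}. Since every morphism in $F$ is the identity on the underlying power series, every triangle commutes, so $F$ is a genuine diagram; and it is filtered, because any finite collection of its objects maps into $D(\rho)$ for $\rho$ componentwise small enough (but still $> r$).

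Next I would observe that both $(D(\rho))_{\rho > r}$ and $(E(\rho))_{\rho > r}$ are cofinal sub-diagrams of $F$: any object of $F$ of the form $E(\rho)$ maps into $D(\rho')$ for any $r < \rho' < \rho$ (such $\rho'$ exists, e.g.\ $\rho' = \tfrac12(r + \rho)$), which shows $(D(\rho))_\rho$ is cofinal; and every $D(\rho)$ maps into $E(\rho)$, which shows $(E(\rho))_\rho$ is cofinal. Hence $\underset{\rho > r}\colim D(\rho) \cong \colim F \cong \underset{\rho > r}\colim E(\rho)$ in $\Ind(\mathsf{Ban}_R)$, which is exactly the assertion. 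The non-archimedean statement is obtained by running the identical argument inside $\Ind(\mathsf{Ban}^{na}_R)$: the objects above and the comparison maps of Lemma \ref{lem:compare} make sense there verbatim.

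The only points requiring any real attention — and hence the ``main obstacle'', such as it is — are bookkeeping ones: checking that the index set $\{\rho : \rho_i > r_i \ \forall i\}$ is directed and, crucially, has no maximal element, so that the strictly smaller polyradius $\rho' < \rho$ needed to apply the second half of Lemma \ref{lem:compare} is always available, and checking that the diagram $F$ is filtered and well defined. Both become immediate once one notes that \emph{everything in sight is the identity on the underlying formal power series}, so there are no coherence conditions to verify beyond the existence of common upper bounds.
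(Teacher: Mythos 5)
Your proposal is correct and takes essentially the same approach as the paper: interleave the two ind-systems via the bounded identity-on-power-series maps from Lemma~\ref{lem:compare} and observe that the compositions recover the structure maps, so the two systems define the same object of $\Ind(\mathsf{Ban}_R)$. Your version merely spells out the cofinality argument more explicitly than the paper does.
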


\begin{proof}
Using Lemma \ref{lem:compare} consider the bounded inclusions 
\[R \{ \frac{x_1}{\rho_1}, \dots,  \frac{x_n}{\rho_n}\} \to R \langle \frac{x_1}{\rho_1}, \dots,  \frac{x_n}{\rho_n}\rangle
\]
and for every $\rho>r$ define $\rho'=\frac{\rho+r}{2}$ so we get a bounded inclusion 
\[ R \langle \frac{x_1}{\rho_1}, \dots,  \frac{x_n}{\rho_n}\rangle \to R \{ \frac{x_1}{\rho'_1}, \dots,  \frac{x_n}{\rho'_n}\}.
\]
These both preserve the inclusion into the formal power series and their compositions are simply the usual maps in the ind-system.
\end{proof}

\begin{definition}\label{def:dagger}
Let \(R\) be a Banach ring and \(r \in \R_{\geq 0}^n\) a polyradius. We shall refer to the formal colimit from Lemma \ref{lem:colims} as the \textit{dagger algebra}

\[R \{\frac{x_1}{\varrho_1},\cdots, \frac{x_n}{\varrho_n}\}^\dagger \cong \underset{\varrho > r}\colim R \gen{\frac{x_1}{\varrho_1},\cdots, \frac{x_n}{\varrho_n}} \cong \underset{\varrho > r}\colim R \{\frac{x_1}{\varrho_1},\cdots, \frac{x_n}{\varrho_n}\}\] 
of \textit{overconvergent power series} on the polydisc of polyradius \(r\). We view this as an object of \(\mathsf{Ind}(\mathsf{Comm}(\mathsf{Ban}_R))\) or if $R$ is non-archimedean it can also be viewed as an object of  \(\mathsf{Ind}(\mathsf{Comm}(\mathsf{Ban}^{na}_R))\).
\end{definition}
It satisfies
\begin{equation}\label{eqn:decomDaggerNA}
R \gen{\frac{x_1}{r_1}, \dots, \frac{x_n}{r_n} }^{\dagger} \wotimes^{na}_{R} R \gen{\frac{y_1}{s_1}, \dots, \frac{y_n}{s_n} }^{\dagger} \cong R \gen{\frac{x_1}{r_1}, \dots, \frac{x_n}{r_n},  \frac{y_1}{s_1}, \dots, \frac{y_m}{s_m}}^{\dagger}.
\end{equation}
and
\begin{equation}\label{eqn:decomDagger}
R \{ \frac{x_1}{r_1}, \dots, \frac{x_n}{r_n} \}^{\dagger} \wotimes_{R} R \{ \frac{y_1}{s_1}, \dots, \frac{y_n}{s_n} \}^{\dagger} \cong R \{ \frac{x_1}{r_1}, \dots, \frac{x_n}{r_n},  \frac{y_1}{s_1}, \dots, \frac{y_m}{s_m}\}^{\dagger}.
\end{equation}
\begin{remark}\label{rem:dagger_equivalent}
In \cites{Cortinas-Cuntz-Meyer-Tamme:Nonarchimedean, Meyer-Mukherjee:Bornological_tf}, the authors define dagger algebras over non-archimedean discrete valuation rings in a slightly different way, using bornological analysis. Let \(\dvr\) be a complete discrete valuation ring with uniformiser \(\dvgen\). Suppose \(R\) is a (possibly non-commutative) bornologically torsion-free \(\dvr\)-algebra, then the \textit{linear growth bornology} is defined as the bornology generated by submodules of the form \(\sum_{n=0}^\infty \dvgen^n S^{n+1}\), where \(S \subseteq R\) is a bounded submodule. Denote \(R\) with the linear growth bornology by \(\ling{R}\). Completing in this bornology yields a complete, bornologically torsion-free \(\dvr\)-algebra \(R^\dagger \defeq \comb{\ling{R}}\). When \(R = \dvr[t_1,\dots, t_n]\) with the fine bornology, this specialises to the Monsky-Washnitzer algebra \[\dvr[t_1,\dots,t_n]^\dagger \cong \setgiven{\sum_{\alpha \in \N^n} c_\alpha t^\alpha}{\abs{c_\alpha} \leq C \beta^{\abs{\alpha}}, \text{ some } 0<\beta<1, C>0} \] by \cite{Cortinas-Cuntz-Meyer-Tamme:Nonarchimedean}*{Lemma 3.2.2}.  This has the bornology where a subset is bounded if it contains power series as above for a fixed \(C>0\) and \(0<\beta < 1\). Now suppose \(r = (1,\dots,1)\) in Definition \ref{def:dagger}. Then for each term \(\dvr \gen{\frac{t_1}{\varrho_1},\dots, \frac{t_n}{\varrho_n}} = \setgiven{\sum_{\alpha \in \N^n} d_\alpha x^\alpha}{ \abs{d_\alpha}\varrho^{\abs{\alpha}} \to 0} \) in the inductive limit, there is an obvious bounded inclusion into \(\dvr[t_1,\dots,t_n]^\dagger\): given such a $\sum_{\alpha \in \N^n} d_\alpha x^\alpha$ the relevant \(C\) is given by the upper bound of the sequence \((\abs{d_\alpha}\varrho^{\abs{\alpha}})_{\alpha \in \N^n}\) and \(\beta = \frac{1}{\varrho_1 \varrho_2 \cdots \varrho_n}\). Therefore, there is a unique bounded \(\dvr\)-algebra homomorphism \[\dvr \{t_1,\cdots,t_n\}^\dagger \to \dvr[t_1,\dots, t_n]^\dagger\] for the inductive limit bornology induced by the von Neumann bornologies on the Tate algebras on the left hand side.  

In the other direction, we use the universal property of dagger algebras \cite{Meyer-Mukherjee:Bornological_tf}*{Theorem 5.3}. More concretely, first note that there is a canonical bounded \(\dvr\)-algebra homomorphism \(\dvr[t_1,\dots,t_n] \to \dvr \gen{\frac{t_1}{\varrho_1},\dots,\frac{t_n}{\varrho_n}}\) into the Tate algebra. Since a Banach \(\dvr\)-algebra with the von Neumann bornology is a dagger algebra, there is a unique bounded \(\dvr\)-algebra homomorphism \[\dvr[t_1,\cdots, t_n]^\dagger \to \dvr \gen{\frac{t_1}{\varrho_1},\dots,\frac{t_n}{\varrho_n}} \subseteq \dvr \{t_1,\dots, t_n\}^\dagger,\] as required. 


\end{remark}

\begin{lemma}\label{lem:daggerdaigstrict}
Let \(R\) be any Banach ring, archimedean or not. Then
for any real number $r \geq 0$, the diagonal sequence
\[0 \to R\{\frac{y}{r},\frac{z}{r}\}^{\dagger} \to  R\{\frac{y}{r},\frac{z}{r}\}^{\dagger} \to R\{\frac{x}{r}\}^{\dagger} \to 0
\]
is a strictly exact in \(\mathsf{Ind}(\mathsf{Ban}_R)\). If \(R\) is non-archimedean the same can be said in \(\mathsf{Ind}(\mathsf{Ban}^{na}_R)\).

\end{lemma}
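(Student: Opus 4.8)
The plan is to exhibit the diagonal sequence of the lemma as a filtered colimit of the Tate-algebra diagonal sequences already handled in Lemma \ref{lem:TateDiagStrict}, and then to push strict exactness through the colimit using Lemma \ref{lem:filtered_strict}.

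First I would unwind Definition \ref{def:dagger}. For the two-variable polyradius \((r,r)\) we have \(R\{\frac{y}{r},\frac{z}{r}\}^{\dagger} \cong \underset{(\varrho_1,\varrho_2)}{\colim}\, R\gen{\frac{y}{\varrho_1},\frac{z}{\varrho_2}}\), the colimit running over polyradii \((\varrho_1,\varrho_2) > (r,r)\); the diagonal subfamily \(\{(\varrho,\varrho) : \varrho > r\}\) is cofinal (given \((\varrho_1,\varrho_2)\), take \(\varrho = \min(\varrho_1,\varrho_2)\)), so \(R\{\frac{y}{r},\frac{z}{r}\}^{\dagger} \cong \underset{\varrho > r}{\colim}\, R\gen{\frac{y}{\varrho},\frac{z}{\varrho}}\), and likewise \(R\{\frac{x}{r}\}^{\dagger} \cong \underset{\varrho > r}{\colim}\, R\gen{\frac{x}{\varrho}}\). (One may instead use the disk-algebra presentation via Lemma \ref{lem:colims}, but the Tate presentation is the one directly covered by Lemma \ref{lem:TateDiagStrict}, and it sidesteps the non-strictness phenomenon of Remark \ref{rem:wondered}.) Next I would observe that the three maps of the sequence — multiplication by \(y-z\), the diagonalisation \(\pi(f)(x) = f(x,x)\), and the section \(\iota(f)(y,z) = f(y)\) — commute with the transition maps of these inductive systems, since those transition maps are simply the power-series inclusions \(R\gen{\frac{y}{\varrho},\frac{z}{\varrho}} \to R\gen{\frac{y}{\varrho'},\frac{z}{\varrho'}}\) for \(\varrho' < \varrho\). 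Hence the diagonal sequence of the lemma is the filtered colimit over \(\varrho > r\) of the sequences
\[0 \to R\gen{\tfrac{y}{\varrho},\tfrac{z}{\varrho}} \stackrel{y-z}\longrightarrow R\gen{\tfrac{y}{\varrho},\tfrac{z}{\varrho}} \to R\gen{\tfrac{x}{\varrho}} \to 0,\]
each of which is strictly exact by Lemma \ref{lem:TateDiagStrict}.

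To finish, I would invoke that \(\mathsf{Ind}(\mathsf{Ban}_R)\) has exact filtered colimits and that filtered colimits there commute with kernels: by Lemma \ref{lem:filtered_strict} the colimit of the strict monomorphism \(y-z\) remains strict, and it remains a monomorphism since filtered colimits are exact; similarly the colimit of the strict epimorphism \(\pi\) remains a strict epimorphism; and exactness at the middle term passes to the colimit because forming kernels and images commutes with filtered colimits. Thus the colimit sequence is strictly exact, which is the claim. The non-archimedean assertion follows verbatim by working in \(\mathsf{Ind}(\mathsf{Ban}^{na}_R)\) and using the non-archimedean half of Lemma \ref{lem:TateDiagStrict}. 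The one point deserving care — and essentially the only nonformal input beyond Lemma \ref{lem:TateDiagStrict} — is the interchange of the filtered colimit with the homological data; this is precisely where the hypothesis that \(\mathsf{Ind}(\mathsf{Ban}_R)\) (resp. \(\mathsf{Ind}(\mathsf{Ban}^{na}_R)\)) has strictly exact filtered colimits, invoked through Lemma \ref{lem:filtered_strict}, does the work.
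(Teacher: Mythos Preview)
Your proof is correct and follows essentially the same approach as the paper: write the dagger algebras as filtered colimits of Tate algebras via Definition \ref{def:dagger} (the paper cites Lemma \ref{lem:colims}), invoke Lemma \ref{lem:TateDiagStrict} for each term, and pass to the colimit using Lemma \ref{lem:filtered_strict}. Your version is more explicit about the cofinality of the diagonal subfamily and the compatibility of the maps with the transition morphisms, but the strategy is identical.
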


\begin{proof} 
By Lemma \ref{lem:filtered_strict} and \ref{lem:colims}, it suffices to show that for any \(\varrho > r\), the algebraically exact sequence \[0 \to R\gen{\frac{y}{\varrho}, \frac{z}{\varrho}} \to R\gen{\frac{y}{\varrho}, \frac{z}{\varrho}} \to R\gen{\frac{x}{\varrho}} \to 0\] is strict. The latter follows from Lemma \ref{lem:TateDiagStrict}. 
\end{proof}
Recall the definition of the analytic functions over $R$ on an open polydisk of polyradius $\bold{r}=(r_1, \dots, r_n)$ where $0<r_i\leq \infty$.
\[ \mathcal{O}_{R}(D_{\bold{r}})= \underset{\bold{p}<\bold{r}}\lim R\{\frac{x_1}{p_1}, \dots, \frac{x_n}{p_n} \}.
\]
When $r_i = \infty $ for $i=1, \dots, n$ then we write this as $\mathcal{O}^{an}_{R}(\mathbb{A}^{n})$.
\begin{remark}\label{rem:AnotherWay}
These open poly-disk algebras could also be written as a limit of $R\langle\frac{x}{\rho}\rangle$, or $R\langle\frac{x}{\rho}\rangle^{\dagger}=R\{\frac{x}{\rho}\}^{\dagger}$. They are objects of $\mathsf{Comm}(\Ind(\mathsf{Ban}_R))$ for any Banach ring $R$, and when $R$ is non-archimedean, also objects of $\mathsf{Comm}(\Ind(\mathsf{Ban}^{na}_R))$.
 \end{remark}
It was shown in \cite{Bambozzi-Ben-Bassat-Kremnizer:Stein} and \cite{ben2020fr} that for $\bold{t}=(r_1, \dots, r_n, s_1, \dots, s_m)$ that

\begin{equation}\label{eqn:decomStein}
 \mathcal{O}_{R}(D^{n}_{\bold{r}}) \wotimes_{R} \mathcal{O}_{R}(D^{m}_{\bold{s}}) \cong \mathcal{O}_{R}(D^{n+m}_{\bold{t}}).\end{equation}
When $R$ is non-archimedean we also have
\begin{equation}\label{eqn:decomSteinNA}
\mathcal{O}_{R}(D^{n}_{\bold{r}}) \wotimes^{na}_{R} \mathcal{O}_{R}(D^{m}_{\bold{s}}) \cong \mathcal{O}_{R}(D^{n+m}_{\bold{t}}).
\end{equation}
Furthermore, by \cite{ben2020fr}, $\mathcal{O}_{R}(D^{n}_{\bold{r}})$ is nuclear over $R$ and hence flat over $R$ and also metrizable over $R$. When $R$ is non-archimedean, the same holds in the non-archimedean category.

\begin{definition}\label{defn:Upsilon} Given a countable set $I$, let $\Upsilon(I)$ be the poset consisting of functions $\psi:I \to \mathbb{Z}_{\geq 1}$ with the order $\psi_{1}< \psi_{2}$ if $\psi_{1}(i)< \psi_{2}(i)$ for all $i \in I-J$ where $J$ is a finite subset of $I$. The category $\Upsilon(I)$ with objects $\underset{i \in I}\prod\mathbb{Z}_{\geq 1}$ can be thought of as categories of maps $I\to \mathbb{Z}_{\geq 1}$.
\end{definition}

The functions on the one dimensional open disk of radius $r$ will be denoted $\mathcal{O}_{R}(D^{1}_{r})$ and can be written \cite{ben2020fr} as 
\begin{equation}\label{eqn:Hol1var}``\underset{\psi \in \Upsilon(\mathbb{N})}\colim"\{f=\sum_{i=0}^{\infty} a_i x^{i} \in R[[x]] \ \ | \ \ \underset{n\in \mathbb{N}}\sup \left(\psi(n)^{-1} \sum_{i=0}^{\infty} |a_i|r_n^{i}\right)<\infty \}
\end{equation}
where $r_n \to r_-$ is strictly increasing.
For a multi-radius $(s,t)$, the two-dimensional open polydisk will be denoted $\mathcal{O}(D^{2}_{(s,t), R})$ and as shown in \cite{ben2020fr} can be written as
\begin{equation}\label{eqn:Hol2var}``\underset{\psi \in \Upsilon(\mathbb{N}^2)}\colim"\{f=\sum_{i,j=0}^{\infty} a_{i,j} x^{i}y^{j} \in R[[x,y]]  \ \ | \ \  \underset{n,m \in \mathbb{N}}\sup \left(\psi(n,m)^{-1} \sum_{i,j=0}^{\infty} |a_{i,j}|s_n^{i}t_m^{j}\right)<\infty \}
\end{equation}
where $s_n \to s_-$ and $t_m \to t_-$ are strictly increasing.

\begin{lemma}\label{lem:HoloStrict}
For any $r$, $0<r \leq \infty$ the diagonal sequence
\[0 \to \mathcal{O}_{R}(D^{2}_{(r,r)}) \to  \mathcal{O}_{R}(D^{2}_{(r,r)}) \to  \mathcal{O}_{R}(D^{1}_{r}) \to 0
\]
is strictly exact in \(\mathsf{Ind}(\mathsf{Ban}_R)\). If \(R\) is non-archimedean the same can be said in \(\mathsf{Ind}(\mathsf{Ban}^{na}_R)\).
\end{lemma}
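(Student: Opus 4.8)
The plan is to follow the same template as Lemmas \ref{lem:poly_diag_strict}, \ref{lem:TateDiagStrict} and \ref{lem:daggerdaigstrict}. The surjection \(\pi\colon\mathcal{O}_{R}(D^{2}_{(r,r)})\to\mathcal{O}_{R}(D^{1}_{r})\) is \(f(y,z)\mapsto f(x,x)\), and it admits the bounded \(R\)-linear section \(\iota\colon\mathcal{O}_{R}(D^{1}_{r})\to\mathcal{O}_{R}(D^{2}_{(r,r)})\), \(f(x)\mapsto f(y)\); since \(\pi\circ\iota=\id\) and both are bounded (checked directly on the colimit presentations \eqref{eqn:Hol1var} and \eqref{eqn:Hol2var}), \(\pi\) is a split, hence strict, epimorphism, and the sequence is exact on the right. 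Multiplication by \(y-z\) is injective already in \(R[[y,z]]\). So, exactly as in the earlier diagonal lemmas, everything reduces to two statements about the division map \(f\mapsto g\defeq f/(y-z)\), defined on power series with \(f(z,z)=0\): (i) \(g\) again lies in \(\mathcal{O}_{R}(D^{2}_{(r,r)})\) (this gives exactness in the middle), and (ii) the division map is bounded (this gives strictness of multiplication by \(y-z\)).

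To prove (i) and (ii) I would use the colimit presentation \eqref{eqn:Hol2var} with a fixed strictly increasing sequence \(r_{n}\to r_{-}\) (taking both coordinate sequences equal to \(r_{n}\) since the polyradius is \((r,r)\)), together with Lemma \ref{lem:filtered_strict}: it suffices to exhibit, for every \(\psi\in\Upsilon(\mathbb{N}^{2})\), some \(\phi\in\Upsilon(\mathbb{N}^{2})\) such that division by \(y-z\) carries the level-\(\psi\) Banach space boundedly into the level-\(\phi\) one. Writing \(f=\sum_{i,j}a_{i,j}y^{i}z^{j}\) and \(g=\sum_{k,l}c_{k,l}y^{k}z^{l}\), one has \(c_{k,l}=\sum_{t=0}^{l}a_{k+1+t,\,l-t}\) exactly as in Lemmas \ref{lem:poly_diag_strict} and \ref{lem:TateDiagStrict}. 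Reindexing \((k,l,t)\mapsto(i,j)=(k+1+t,\,l-t)\) gives
\[
\sum_{k,l}|c_{k,l}|\,r_{n}^{k}r_{m}^{l}\ \leq\ \sum_{i\geq 1,\,j\geq 0}|a_{i,j}|\,r_{m}^{\,j}\sum_{t=0}^{i-1}r_{n}^{\,i-1-t}r_{m}^{\,t},
\]
and the inner geometric sum is at most \(i\,r_{P}^{\,i-1}\) with \(P=\max(n,m)\). The polynomial prefactor \(i\,r_{P}^{\,i-1}\) is then absorbed into a single one-step shift \(r_{P}\rightsquigarrow r_{P+1}\), using that \(r_{P}<r_{P+1}<r\) so that \(D_{P}\defeq\sup_{i\geq 1} i\,r_{P}^{\,i-1}r_{P+1}^{-i}<\infty\); since also \(r_{m}\leq r_{P+1}\), the right-hand side is \(\leq D_{P}\,\|f\|_{\psi}\,\psi(P+1,P+1)\). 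Hence \(\phi(n,m)\defeq\lceil D_{\max(n,m)}\,\psi(\max(n,m)+1,\max(n,m)+1)\rceil\) works, and it is a legitimate object of \(\Upsilon(\mathbb{N}^{2})\). Together with the trivial bound \(\|(y-z)f\|_{\psi}\leq 2r\,\|f\|_{\psi}\), which shows multiplication by \(y-z\) is a colimit of bounded self-maps at each level, Lemma \ref{lem:filtered_strict} yields strictness, and the same estimate gives exactness in the middle. The non-archimedean case is identical with every sum over coefficients replaced by a supremum, and is in fact easier, exactly as in Lemma \ref{lem:TateDiagStrict}.

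The hard part — really the only nontrivial point — is the two-index bookkeeping in the displayed estimate: controlling the mixed geometric sum \(\sum_{t}r_{n}^{\,i-1-t}r_{m}^{\,t}\) and then absorbing the resulting polynomial-in-\(i\) factor by the harmless level shift \(r_{P}\rightsquigarrow r_{P+1}\). This shift is available precisely because the radii are approached strictly from below, which is also the reason the disc algebras \(R\{\tfrac{x}{r}\}\) themselves fail to participate in such sequences (cf.\ Remark \ref{rem:wondered}): there is no room to shift. Everything else is formal and parallels Lemmas \ref{lem:formal_poly_diag_strict} and \ref{lem:TateDiagStrict}; the one-variable presentation \eqref{eqn:Hol1var} enters only to identify the cokernel with \(\mathcal{O}_{R}(D^{1}_{r})\), and Remark \ref{rem:AnotherWay} together with Lemma \ref{lem:colims} shows the argument is insensitive to whether one models these algebras by polydisc or Tate-type building blocks.
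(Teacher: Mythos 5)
Your proof is correct, and its scaffolding matches the paper's: both reduce, via the colimit presentation \eqref{eqn:Hol2var} and Lemma \ref{lem:filtered_strict}, to producing for each $\psi\in\Upsilon(\mathbb{N}^2)$ a $\phi\in\Upsilon(\mathbb{N}^2)$ such that division by $y-z$ is bounded from the level-$\psi$ Banach space to the level-$\phi$ one. The estimate itself is obtained by a genuinely different route. The paper factors division at each level through the sup-norm Tate algebras using Lemma \ref{lem:compare}: an $\ell^1$-to-sup comparison at $(r_m,r_n)$, the Tate division map from Lemma \ref{lem:TateDiagStrict}, and a sup-to-$\ell^1$ comparison dropping the radius to $(r_{m-1},r_{n-1})$, with the composite's operator norms bundled into constants $D_{m,n}$ and $\phi(m,n)=\psi(m+1,n+1)D_{m+1,n+1}$. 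You instead stay inside the $\ell^1$-norm, reindex $c_{k,l}=\sum_{t}a_{k+1+t,l-t}$, bound the mixed geometric sum $\sum_{t=0}^{i-1}r_n^{i-1-t}r_m^{t}\leq i\,r_P^{i-1}$ with $P=\max(n,m)$, and absorb the polynomial factor $i$ by the one-step shift $r_P\rightsquigarrow r_{P+1}$, via $D_P\defeq\sup_{i\geq 1}i\,r_P^{i-1}r_{P+1}^{-i}<\infty$. Both proofs exploit the same phenomenon --- the radii are approached strictly from below, so a shift absorbs polynomial growth --- but yours is self-contained and puts the absorbing mechanism on the page, whereas the paper's hides it inside the composite of comparison maps. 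That is a real advantage: the Tate division estimate in Lemma \ref{lem:TateDiagStrict} reads $\norm{g}\leq(L+1)\norm{f}$ with $L$ depending on $f$, which is clean in the non-archimedean case (the $(L+1)$ disappears) but not manifestly uniform in the archimedean one, so in the archimedean setting it is really the radius drop in the final comparison map that does the work; your argument makes this visible and does not lean on Lemma \ref{lem:TateDiagStrict} at all except as a remark about the non-archimedean simplification. One small imprecision: the bound $\norm{(y-z)f}_\psi\leq 2r\norm{f}_\psi$ is vacuous when $r=\infty$; what is actually needed (and what both proofs implicitly use) is only that for each $\psi$ there is a $\phi'$ with $\norm{(y-z)f}_{\phi'}$ controlled by $\norm{f}_\psi$, which follows from $\norm{(y-z)f}_{(r_n,r_m)}\leq(r_n+r_m)\norm{f}_{(r_n,r_m)}$ with $\phi'(n,m)=\lceil(r_n+r_m)\psi(n,m)\rceil$.
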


\begin{proof} 
We will use the characterization in Lemma \ref{lem:filtered_strict}. Consider any map $\psi: \mathbb{N}^2 \to \mathbb{Z}_{\geq 1}$ and any formal power series $f= \underset{(i,j) \in \mathbb{N}^2}\sum a_{i,j}y^iz^j$ which vanishes on the diagonal and satisfies \[
C= \underset{n,m \in \mathbb{N}}\sup \left(\psi(m,n)^{-1} \sum_{i,j=0}^{\infty} |a_{i,j}|r_m^{i}r_n^{j}\right)< \infty\] 
and for all $m,n$
\[ C_{m,n}=\sum_{i,j=0}^{\infty} |a_{i,j}|r_m^{i}r_n^{j} <\infty
\]
where $r_n , r_m \to r_-$ are strictly increasing.
Consider the power series $g =  \underset{(k,l) \in \mathbb{N}^2}\sum b_{k,l}y^kz^l \in R[[y,z]]$ defined by $g=f/(y-z)$. Consider the composition of morphisms
\[R\{\frac{y}{r_m}, \frac{z}{r_n}\} \to R \langle\frac{y}{r_m}, \frac{z}{r_n}\rangle \longrightarrow  R\langle\frac{y}{r_m}, \frac{z}{r_n}\rangle \to R\{\frac{y}{r_{m-1}}, \frac{z}{r_{n-1}}\}
\]
where the middle one is given by \[f \mapsto \frac{f(y,z) - f(z,z)}{y-z}.
\]
They are all proven to be bounded in Lemmas \ref{lem:TateDiagStrict} and \ref{lem:compare}. Therefore there exist positive integers $D_{m,n}$ such that
 \[\sum_{k,l=0}^{\infty} |b_{k,l}|r_{m-1}^{k}r_{n-1}^{l} \leq D_{m,n} C_{m,n}.
 \]

Then we can define a map $\phi: \mathbb{N}^2 \to \mathbb{Z}_{\geq 1}$ using the formula 
\[\phi(m,n)= \psi(m+1,n+1)D_{m+1,n+1}
\]

Then the power series $g =  \underset{(k,l) \in \mathbb{N}^2}\sum b_{k,l}y^kz^l \in R[[y,z]]$ defined by $f/(y-z)$ satisfies 
\begin{equation}\begin{split}\underset{m,n \in \mathbb{N}}\sup \left( \frac{\sum_{k,l=0}^{\infty} |b_{k,l}|r_m^{k}r_n^{l} }{\phi(m,n)}\right) & = \underset{m,n \in \mathbb{N}}\sup \left( \frac{\sum_{k,l=0}^{\infty} |b_{k,l}|r_{m-1}^{k}r_{n-1}^{l} }{\phi(m-1,n-1)}\right)  \\
& \leq \underset{m,n \in \mathbb{N}}\sup \frac{ D_{m,n} C_{m,n}}{\phi(m-1,n-1)} \\
& = \underset{m,n \in \mathbb{N}}\sup \frac{  C_{m,n}}{\psi(m,n)} =C
\end{split}
\end{equation}
\end{proof}
\begin{remark}
As mentioned in Remark 
\ref{rem:AnotherWay} another way to prove Lemma \ref{lem:HoloStrict} would be to simply replace the summations over $i,j$ in the boundedness condition in Equations \ref{eqn:Hol1var} and \ref{eqn:Hol2var} with suprema.
\end{remark}

\subsection{The analytification functor}\label{subsec:analytification_functor}

In this subsection, we discuss the functoriality of the canonical maps from the polynomial \(R\)-algebra to any of the completions of the previous subsection. Let \(P_n \defeq R[x_1,\dots,x_n]\) be a finitely generated polynomial algebra, without any further (analytical) structure. That is, we view \(P_n\) as an object of the category \(\mathsf{Poly}_R\) of finitely generated polynomial \(R\)-algebras. As already seen at the start of this section, we can view \(P_n\) as an object of \(\mathsf{Comm}(\mathsf{Ind}(\mathsf{Ban}_R))\) by writing it as a filtered colimit 

\[ P_n \cong \bigcup_{m \in \N} \mathcal{F}_m(R[x_1,\dots,x_n]),\] where each \(\mathcal{F}_m(R[x_1,\dots,x_n])\) is a finite dimensional \(R\)-module, equipped with the norm of \(R\).  This is an object of \(\mathsf{Ind}(\mathsf{Ban}_R)\), and we obtain a functor 

\[ \mathsf{Poly}_R \to \mathsf{Comm}(\mathsf{Ind}(\mathsf{Ban}_R)),\] which we call the \textit{fine bornology functor}. It is trivially to see that the fine bornology functor is independent of the choice of coordinates \((x_1,\dots,x_n)\), and is thereby also well-defined. Next, consider the assignment which takes each polynomial algebra \(R[x_1,\dots,x_n] \in \mathsf{Comm}(\mathsf{Ind}(\mathsf{Ban}_R))\) with the fine structure to one of the $n$-dimensional analytic algebras $R[[x_1,\dots,x_n]]$, $\mathcal{O}_{R}(D^{n}_{\bold{p}})$ or $R \{\frac{x_1}{r_1}, \dots, \frac{x_n}{r_n} \}^\dagger$.  These are all objects of  \(\mathsf{Comm}(\mathsf{Ind}(\mathsf{Ban}_R))\).  If $R$ is non-archimedean we can use $R\gen{\frac{x_1}{t_1},\dots,\frac{x_n}{t_n}}$ in  \(\mathsf{Comm}(\mathsf{Ind}(\mathsf{Ban}^{na}_R))\).  Any algebraic map $R[x_1,\dots,x_n] \to R[x_1,\dots, x_m]$ is sent to the obvious map in \(\mathsf{Comm}(\mathsf{Ind}(\mathsf{Ban}_R))\). Again, it is easy to see that this assignment is functorial. We obtain another functor \[\mathsf{Poly}_R \to \mathsf{Comm}(\mathsf{Ind}(\mathsf{Ban}_R)),\] which plays the role of analytification of the affine \(n\)-space \(\mathbb{A}_{R}^n = \Spec(R[x_1,\dots,x_n])\).  

To extend the functor above to the category $SCR_R$ of simplicial commutative \(R\)-algebras, we first embed the discrete category \(\mathsf{Comm}(\mathsf{Ind}(\mathsf{Ban}_R))\) into the underlying \(\infty\)-category of the category \(\mathsf{sComm}(\mathsf{Ind}(\mathsf{Ban}_R))\) with the model structure defined in \cite{Ben-Bassat-Kremnizer:Nonarchimedean_analytic} and developed in \cite{Ben-Bassat-Kelly-Kremnizer:Perspective}. Since this category admits geometric realizations, we can take the Kan extension of the induced functor \(\mathsf{Poly}_R \to \mathsf{sComm}(\mathsf{Ind}(\mathsf{Ban}_R))= \mathsf{Comm}(\mathsf{sInd}(\mathsf{Ban}_R))\) along the inclusion \(\mathsf{Poly}_R \hookrightarrow SCR_R= \mathsf{sCAlg}_R\). More precisely, by a formalism developed by Lurie, the category of simplicial commutative algebras over $R$ is generated by sifted homotopy colimits by $\mathsf{Poly}_R$. The main fact which we rely on that if $\mathcal{S}$ is any $\infty$-category admitting sifted homotopy colimits then the inclusion $\mathsf{Poly}_R \to SCR_R$ induces an equivalence of categories 
\[\text{Funct}_{sft}(SCR_R, \mathcal{S}) \longrightarrow \text{Funct}(\mathsf{Poly}_R, \mathcal{S})
\]
where the left hand side is the full subcategory of $\text{Funct}(SCR_R, \mathcal{S})$ spanned by those commuting with sifted colimits. Denote by $\mathcal{F}$ the full subcategory of the $\infty$-category $\mathsf{Comm}(\mathsf{sInd}(\mathsf{Ban}_R))$ generated from the  $R[[x_1,\dots,x_n]]$ for $n=0, 1, \dots$  by sifted homotopy colimits. Denote by $\mathcal{H}$ the full subcategory of the $\infty$-category $\mathsf{Comm}(\mathsf{sInd}(\mathsf{Ban}_R))$ generated from the  $\mathcal{O}_{R}(D^{n}_{\bold{p}})$ for $n=0, 1, \dots$ for all possible $\bold{p}$ by sifted homotopy colimits.  Denote by $\mathcal{D}$ the full subcategory of the $\infty$-category $\mathsf{Comm}(\mathsf{sInd}(\mathsf{Ban}_R))$ generated from the $R \{\frac{x_1}{r_1}, \dots, \frac{x_n}{r_n} \}^\dagger$ for $n=0, 1, \dots$ for all possible $\bold{r}$ by sifted homotopy colimits. Denote by $\mathcal{T}$ the full subcategory of the $\infty$-category $\mathsf{Comm}(\mathsf{sInd}(\mathsf{Ban}^{na}_R))$ generated from the $R\gen{\frac{x_1}{t_1},\dots,\frac{x_n}{t_n}}$ for $n=0, 1, \dots$ for all possible $\bold{t}$ by sifted homotopy colimits. For $R$ non-archimedean, we get analytification functors $SCR_{R}\to \mathcal{F}$,  $SCR_{R}\to \mathcal{H}$,  $SCR_{R}\to \mathcal{D}$,  $SCR_{R}\to \mathcal{T}$. For $R$ archimedean, we get analytification functors $SCR_{R}\to \mathcal{F}$,  $SCR_{R}\to \mathcal{H}$,  $SCR_{R}\to \mathcal{D}$. The categories $\mathcal{F},  \mathcal{D},  \mathcal{T}, \mathcal{H}$ have obvious full subcategories of generators which play the same role inside them as the inclusion $\mathsf{Poly}_R \to SCR_R$.
\begin{example}
Given $n$ elements $f_i$ of $\mathbb{Z}[x_1, \dots, x_m]$, these can be used to define the following sifted homotopy colimit
$\mathbb{Z}[x_1, \dots, x_m] \wotimes^{\mathbb{L}}_{\mathbb{Z}[y_1, \dots, y_n]}\mathbb{Z}$ via $y_i \mapsto f_i$ and $y_i \mapsto 0$. This sifted homotopy colimit is the derived vanishing locus of the functions. One can similarly define analytic versions of this. They will be discussed more in subsection \ref{quotients}.
\end{example}

\section{Analytifications as homotopy epimorphisms}\label{sec:Analytification_hepi}

We now demonstrate a very general and explicit technique to prove that a given morphism \(f \colon A \to B\) in \(\mathsf{Comm}(\mathsf{Ind}(\mathsf{Ban}_R))\) is a homotopy epimorphism. This will be used to show that several different analytifications of the affine \(n\)-space all arise as homotopy epimorphisms. Our approach uses the \textit{Koszul resolution of the diagonal} associated to an object  \(C \in  \mathsf{Comm}(\mathsf{Ind}(\mathsf{Ban}_R))\), which we assume is flat over \(R\) in the sense of Definition \ref{def:flat}. The first thing we need in order to make sense of the boundary map in the usual Koszul resolution of a commutative algebra is the meaning of an `element' in the category \(\Ind(\mathsf{Ban}_R)\). 

\begin{definition}\label{def:element}
An \textit{element} \(x\) of an object \(V\) in a symmetric monoidal category $\fC$ with monoidal structure $\ootimes$ is a choice of morphism \(x \colon \id_{\fC} \to V\). We denote an element by \(x \in V\).  
\end{definition}

In concrete categories such as \(\mathsf{Mod}_R\) or \(\mathsf{CBorn}_R\), the above definition recovers the usual set-theoretic notion of an element. This is because in these categories, the functor \(\underline{\Hom}(R,-)\) maps an object to its underlying abelian groups. Furthermore, the functor \(\underline{\Hom}(R,-)\) is also faithful, so that a function is determined by its image on elements. On the other hand, this functor is not faithful on the category \(\Ind(\mathsf{Ban}_R)\). Consequently, there could exist non-zero inductive systems whose inductive limit is zero, so that such inductive systems have no non-trivial elements in the sense of Definition \ref{def:element}. 

Let $\fC$ be a closed symmetric monoidal category with enough flat projectives, whose monoidal operation is denoted $\ootimes$.  Let $C$ be a commutative monoid in $\fC$.  Suppose that $c \in C$ is a non-trivial element.
Define the \textit{diagonal Koszul resolution} $K_{C} \to C$ of $C$ over $C\ootimes C$, where $K_C$ is defined by the complex of free modules over $C\ootimes C$:
\[C\ootimes C \longrightarrow C\ootimes C
\]
where the morphism is given by multiplication by $(c \otimes 1 -1 \otimes c)$. 

\begin{definition}\label{def:strictness_condition}
We say that the pair $(C, c)$ satisfies the \textit{strictness condition} if $C$ is flat over the unit for the monoidal structure, the morphism is a strict monomorphism and the (categorical) cokernel is isomorphic to $C$.  In this case, $K_C$ is strictly quasi-isomorphic to $C$.
\end{definition}

We learned about the following method from an article of Joseph L. Taylor \cite{taylor1972general}.
\begin{theorem}\label{thm:main_1}
Let $f: A\to B$ be any morphism  in \(\mathsf{Comm}(\fC)\) between objects $A$ and $B$. Suppose that the pairs $(A, a \in A)$ and $(B, b \in B)$ both satisfy the strictness condition of Defintion \ref{def:strictness_condition}, and that $f(a)=b$. Then $f$ is a homotopy epimorphism.
\end{theorem}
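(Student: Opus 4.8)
The plan is to verify the characterisation of homotopy epimorphisms recalled at the beginning of Section~\ref{sec:homotopy_epimorphisms}, i.e. that the natural map $B \ootimes_A^{\mathbb{L}} B \to B$ is an equivalence, by transporting the diagonal Koszul resolution of $A$ along $f$ and recognising the output as the diagonal Koszul resolution of $B$. The first step is a base-change reduction. In any closed symmetric monoidal $\infty$-category the derived self-intersection is a base change of the absolute diagonal: there is a natural equivalence
\[ B \ootimes_A^{\mathbb{L}} B \;\cong\; A \ootimes_{A \ootimes^{\mathbb{L}} A}^{\mathbb{L}} \bigl(B \ootimes^{\mathbb{L}} B\bigr), \]
where $B \ootimes^{\mathbb{L}} B$ is regarded as an algebra over $A \ootimes^{\mathbb{L}} A$ via $f \ootimes^{\mathbb{L}} f$ and $A$ as a module over $A \ootimes^{\mathbb{L}} A$ via its multiplication. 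Since, by the strictness condition (Definition~\ref{def:strictness_condition}), $A$ and $B$ are flat over the monoidal unit $\id_{\fC}$, we have $A \ootimes^{\mathbb{L}} A \cong A \ootimes A$ and $B \ootimes^{\mathbb{L}} B \cong B \ootimes B$, so the right-hand side becomes $A \ootimes_{A \ootimes A}^{\mathbb{L}}(B \ootimes B)$.

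Next I would compute this using the resolution of $A$. Again by the strictness condition on $(A,a)$, the diagonal Koszul complex $K_A = [A \ootimes A \to A \ootimes A]$, with differential given by multiplication by $a \otimes 1 - 1 \otimes a$, is strictly quasi-isomorphic to $A$ over $A \ootimes A$; its two terms are free of rank one, hence flat, over $A \ootimes A$, and the complex is bounded, so it computes the derived tensor product, giving $A \ootimes_{A \ootimes A}^{\mathbb{L}}(B \ootimes B) \cong K_A \ootimes_{A \ootimes A}(B \ootimes B)$. Base-changing each term of $K_A$ along $f \ootimes f$ turns the latter into the two-term complex $[B \ootimes B \to B \ootimes B]$ whose differential is multiplication by $(f \ootimes f)(a \otimes 1 - 1 \otimes a) = b \otimes 1 - 1 \otimes b$ (this is where $f(a) = b$ enters), which is precisely $K_B$, the diagonal Koszul complex of $(B,b)$. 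Applying the strictness condition to $(B,b)$ shows $K_B$ is strictly quasi-isomorphic to $B$, and composing the displayed equivalences yields $B \ootimes_A^{\mathbb{L}} B \cong B$, as required.

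I expect the only genuine difficulty to be the first step: one must check that the base-change identity for the derived self-intersection is available in the relevant homotopical setting (the $\infty$-category underlying the projective model structure on $\mathsf{Comm}(s\fC)$, as set up just before Proposition~\ref{prop:filtered_colimit_isocohomological}) and, crucially, that flatness of $A$ and $B$ over $\id_{\fC}$ really does permit replacing the derived outer tensor products by the strict ones. In the same vein, Step~2 relies on the fact that a bounded complex of flat objects may be substituted for a projective resolution when computing a derived tensor product, which is valid in the quasi-abelian categories at hand. Granting these standard points, the remainder is a purely formal manipulation of the two Koszul complexes, so no further obstruction arises.
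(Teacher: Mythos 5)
Your proof is correct and follows essentially the same route as the paper's: resolve $A$ over $A \ootimes A$ by the two-term Koszul complex $K_A$, base-change along $f$ using $f(a)=b$ to land on $K_B$, and invoke the strictness condition on $(B,b)$ to identify $K_B$ with $B$. The paper phrases the base change directly as $B\ootimes_{A}K_A\ootimes_{A}B \cong K_B$ via $B\ootimes_{A}(A\ootimes A)\ootimes_{A}B\cong B\ootimes B$, whereas you first state the self-intersection identity $B\ootimes_A^{\mathbb{L}}B\cong A\ootimes^{\mathbb{L}}_{A\ootimes A}(B\ootimes B)$ and then substitute $K_A$; these are the same computation viewed from two angles, so there is no substantive difference.
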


\begin{proof}
Since \((A,a)\) satisfies the strictness condition, the map \[K_A \to A\] is indeed a resolution, that is, an isomorphism in \(\fD(A \ootimes A)\). The morphism \(f \colon A \to B\) induces a commuting diagram
\[
\begin{tikzcd}
0 \arrow{r}{} & A \ootimes A \arrow{r}{a \otimes 1 - 1 \otimes a} \arrow{d}{f \otimes f} & A \ootimes A \arrow[r, two heads] \arrow{d}{f \otimes f} & A \arrow{d}{f}\\
0 \arrow{r}{} & B \ootimes B \arrow{r}{b \otimes 1 - 1 \otimes b} & B \ootimes B \arrow[r, two heads] & B, 
\end{tikzcd}
\]
where the bottom row is strictly exact by the hypothesis that \((B,f(a)=b)\) satisfies the strictness condition. Consequently, the map \(K_B \to B\) is an isomorphism in \(\fD(B \ootimes B)\). 
We have 
\[B\ootimes_{A}(A \ootimes A)\ootimes_{A} B \cong (B\ootimes_{A}A) \ootimes (A \ootimes_{A} B)  \cong B \ootimes B
\]
and therefore 
\[B\ootimes_{A}^{\mathbb{L}}B \cong   B\ootimes_{A}( K_A )\ootimes_{A}B \cong K_B \cong B.
\]
\end{proof}

\begin{corollary}\label{cor:formal_isocohomological}
For any Banach ring \(R\), the map \(R[x] \to R[[x]]\) is a homotopy epimorphism of commutative monoids in the category  \(\mathsf{Ind}(\mathsf{Ban}_R)\). If $R$ is non-archimedean, the same holds in the category  \(\mathsf{Ind}(\mathsf{Ban}^{na}_R)\).
\end{corollary}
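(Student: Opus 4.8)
The plan is to apply Theorem~\ref{thm:main_1} to the canonical map $f\colon R[x]\to R[[x]]$ from \eqref{mono}, taking $A=R[x]$, $B=R[[x]]=\widehat{S_R}(R)$, and the elements $a=x\in R[x]$, $b=x\in R[[x]]$. Since $f(x)=x$ we have $f(a)=b$, so the whole statement reduces to verifying that the pairs $(R[x],x)$ and $(R[[x]],x)$ each satisfy the strictness condition of Definition~\ref{def:strictness_condition}; the non-archimedean assertion will then follow from the identical argument with $\wotimes_R$ replaced by $\wotimes^{na}_R$.

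First I would treat $(R[x],x)$. The object $R[x]$ is flat over $R$: it is the filtered colimit of the finite free $R$-modules $\Fil_m(R[x])$, free modules are flat, and a filtered colimit of flat objects is flat in $\mathsf{Ind}(\mathsf{Ban}_R)$ by Lemma~\ref{lem:filtered_strict}. Next, the identification $R[x]\wotimes_R R[x]\cong R[y,z]$ of \eqref{eqn:decomPoly} carries the Koszul differential, multiplication by $x\otimes 1-1\otimes x$, to multiplication by $y-z$; hence the diagonal Koszul complex of $(R[x],x)$ is precisely the middle map of the diagonal sequence $0\to R[y,z]\xrightarrow{y-z}R[y,z]\to R[x]\to 0$ of Lemma~\ref{lem:poly_diag_strict}. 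That lemma asserts this map is a strict monomorphism, and strict exactness at the right-hand $R[y,z]$ (the augmentation $R[y,z]\onto R[x]$ being a strict epimorphism whose kernel is the image of $y-z$) identifies the categorical cokernel of the Koszul differential with $R[x]$. Thus $(R[x],x)$ satisfies the strictness condition.

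The pair $(R[[x]],x)$ is handled the same way. Here $R[[x]]$ is flat over $R$ because it is nuclear over $R$; and under the identification $R[[x]]\wotimes_R R[[x]]\cong R[[y,z]]$ of \eqref{eqn:decomForm} the Koszul differential again becomes multiplication by $y-z$ on $R[[y,z]]$, which by Lemma~\ref{lem:formal_poly_diag_strict} is a strict monomorphism with categorical cokernel $R[[x]]$. Feeding both pairs and the identity $f(x)=x$ into Theorem~\ref{thm:main_1} shows $f$ is a homotopy epimorphism in $\mathsf{Ind}(\mathsf{Ban}_R)$. For the non-archimedean statement one repeats the argument verbatim, using \eqref{eqn:decomPolyNA} and \eqref{eqn:decomFormNA} in place of \eqref{eqn:decomPoly} and \eqref{eqn:decomForm}, and the non-archimedean halves of Lemmas~\ref{lem:poly_diag_strict} and \ref{lem:formal_poly_diag_strict}.

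I do not expect a genuine obstacle: all the analytic work — the strictness of the multiplication-by-$(y-z)$ maps — has been packaged into Lemmas~\ref{lem:poly_diag_strict} and \ref{lem:formal_poly_diag_strict}, and the structural input into Theorem~\ref{thm:main_1}. The only steps requiring care are bookkeeping ones: checking that the tensor-decomposition isomorphisms \eqref{eqn:decomPoly}, \eqref{eqn:decomForm} intertwine $x\otimes 1-1\otimes x$ with $y-z$, identifying the cokernel of the Koszul differential with $R[x]$ (respectively $R[[x]]$) through strict exactness of the diagonal sequence at its right-hand term, and invoking the flatness of $R[x]$ and $R[[x]]$ over $R$ recorded earlier in the paper.
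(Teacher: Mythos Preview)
Your proposal is correct and follows exactly the paper's approach: apply Theorem~\ref{thm:main_1} with $a=b=x$, the strictness conditions being supplied by Lemmas~\ref{lem:poly_diag_strict} and~\ref{lem:formal_poly_diag_strict}. The paper's own proof is the single line ``Follows from~\ref{lem:formal_poly_diag_strict}''; your version simply spells out the bookkeeping (flatness, the tensor decompositions, and the $R[x]$ side via Lemma~\ref{lem:poly_diag_strict}) that the paper leaves implicit.
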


\begin{proof}
Follows from \ref{lem:formal_poly_diag_strict}.
\end{proof}

\begin{corollary}\label{cor:polynomial_dagger_isocohomological}
Let $R$ be any Banach ring, archimedean or not. For any polyradii $0 \leq \rho < r$, the natural morphisms $R[x]\to R\{\frac{x}{r}\}^{\dagger} \to R\{\frac{x}{\rho}\}^{\dagger}\to R[[x]]$ are homotopy epimorphisms  of commutative monoids in the category  \(\mathsf{Ind}(\mathsf{Ban}_R)\). If $R$ is non-archimedean, the same holds in the category  \(\mathsf{Ind}(\mathsf{Ban}^{na}_R)\).
\end{corollary}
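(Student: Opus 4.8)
The plan is to obtain this as a formal consequence of Theorem~\ref{thm:main_1}, applied separately to each of the three arrows in the chain. In each of the four algebras $R[x] \cong S_R(R)$, $R\{\frac{x}{r}\}^{\dagger}$, $R\{\frac{x}{\rho}\}^{\dagger}$ and $R[[x]] \cong \widehat{S_R}(R)$ I would take the distinguished element to be the standard generator $x$, i.e.\ the bounded $R$-linear map $R \to C$ sending $1$ to $x$; this is a genuine non-trivial element in the sense of Definition~\ref{def:element}, since $R$ is the monoidal unit of $\mathsf{Ind}(\mathsf{Ban}_R)$ (and of $\mathsf{Ind}(\mathsf{Ban}^{na}_R)$ when $R$ is non-archimedean), both being closed symmetric monoidal quasi-abelian categories with enough flat projectives, so that Theorem~\ref{thm:main_1} is available. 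Each of the three natural maps sends $x$ to $x$, so the hypothesis $f(a) = b$ of Theorem~\ref{thm:main_1} is automatic, and the only thing left to check is that each pair $(C, x)$ satisfies the strictness condition of Definition~\ref{def:strictness_condition}.

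This is where the work of Section~\ref{sec:An} enters. For $C$ one of these algebras, the diagonal algebra $C \ootimes_R C$ is, respectively, $R[y,z]$, $R\{\frac{y}{r},\frac{z}{r}\}^{\dagger}$, $R\{\frac{y}{\rho},\frac{z}{\rho}\}^{\dagger}$ and $R[[y,z]]$, via the decompositions \eqref{eqn:decomPoly}, \eqref{eqn:decomDagger} and \eqref{eqn:decomForm}; under these identifications the diagonal Koszul differential, multiplication by $x \otimes 1 - 1 \otimes x$, becomes multiplication by $y - z$. Strict exactness of the associated diagonal sequence $0 \to C \ootimes_R C \to C \ootimes_R C \to C \to 0$ is then exactly Lemma~\ref{lem:poly_diag_strict} for $R[x]$, Lemma~\ref{lem:daggerdaigstrict} for both $R\{\frac{x}{r}\}^{\dagger}$ and $R\{\frac{x}{\rho}\}^{\dagger}$ (that lemma is stated for any radius $\geq 0$, so both radii are covered), and Lemma~\ref{lem:formal_poly_diag_strict} for $R[[x]]$; combined with the flatness over $R$ of each of these algebras recorded in Section~\ref{sec:An}, this gives the strictness condition. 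Theorem~\ref{thm:main_1} then yields at once that $R[x] \to R\{\frac{x}{r}\}^{\dagger}$, $R\{\frac{x}{r}\}^{\dagger} \to R\{\frac{x}{\rho}\}^{\dagger}$ and $R\{\frac{x}{\rho}\}^{\dagger} \to R[[x]]$ are homotopy epimorphisms. The non-archimedean statement follows in the same way, replacing $\ootimes_R$ by $\wotimes^{na}_R$ throughout and using the $\mathsf{Ind}(\mathsf{Ban}^{na}_R)$ versions of the three lemmas together with \eqref{eqn:decomDaggerNA} and \eqref{eqn:decomFormNA}.

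I do not expect any genuine obstacle in the corollary itself: all of the analytic content --- the boundedness and strictness of multiplication by $y - z$ on each of these function algebras --- has already been isolated into Lemmas~\ref{lem:poly_diag_strict}, \ref{lem:daggerdaigstrict} and \ref{lem:formal_poly_diag_strict}, and Theorem~\ref{thm:main_1} is exactly the bridge converting those strictness statements into the homotopy-epimorphism conclusion. If one prefers, the three arrows can instead be handled by applying Theorem~\ref{thm:main_1} only to the three maps out of $R[x]$ and then invoking the two-out-of-three property of Proposition~\ref{prop:two_out_of_three}; the one point worth bearing in mind is that the map $R\{\frac{x}{r}\}^{\dagger} \to R\{\frac{x}{\rho}\}^{\dagger}$ exists as a natural morphism only because $\rho < r$, which is why that hypothesis appears in the statement even though the strictness lemmas themselves hold for all radii.
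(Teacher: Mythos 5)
Your proposal is correct and follows essentially the same route as the paper: apply Theorem~\ref{thm:main_1} to each of the three arrows, with the distinguished element $x$ in each algebra, and supply the strictness condition for all four objects via the diagonal-sequence lemmas and the tensor decompositions \eqref{eqn:decomPoly}, \eqref{eqn:decomDagger}, \eqref{eqn:decomForm}. One small point in your favour: you correctly identify Lemma~\ref{lem:formal_poly_diag_strict} as the input needed for the $R[[x]]$ endpoint, whereas the paper's one-line proof cites Lemma~\ref{lem:TateDiagStrict} in its place --- that lemma concerns Tate algebras and enters only as an ingredient in the proof of Lemma~\ref{lem:daggerdaigstrict}, so it appears to be a minor citation slip in the paper.
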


\begin{proof}
The strictness conditions hold by Lemmas \ref{lem:poly_diag_strict}, \ref{lem:daggerdaigstrict}, and \ref{lem:TateDiagStrict} finishing the proof. 

\end{proof}

\begin{corollary}\label{cor:Tate_isocohomological}
Let $R$ be a non-archimedean Banach ring and $\rho< r$. Then the maps 
\[R[x] \to R\langle \frac{x}{r} \rangle \to R\langle \frac{x}{\rho} \rangle \to R[[x]]
\] are homotopy epimorphisms of commutative monoids in the category  \(\mathsf{Ind}(\mathsf{Ban}^{na}_R)\).
\end{corollary}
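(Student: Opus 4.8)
The plan is to invoke Theorem \ref{thm:main_1} once for each of the three arrows in the chain, taking the coordinate $x$ as the distinguished element on both sides. For a morphism $f \colon A \to B$ in the chain I would set $a = x \in A$ and $b = x \in B$, where $x$ denotes the element in the sense of Definition \ref{def:element} (the morphism $\id_{\fC} \to A$, resp.\ $\id_{\fC}\to B$, picking out the variable); then $f(a) = b$ holds by construction, since each map in the chain is the identity on the coordinate, and $x$ is a non-trivial element. So everything reduces to checking that the four pairs
\[(R[x], x),\quad (R\langle \tfrac{x}{r}\rangle, x),\quad (R\langle \tfrac{x}{\rho}\rangle, x),\quad (R[[x]], x)\]
each satisfy the strictness condition of Definition \ref{def:strictness_condition} in the quasi-abelian category $\mathsf{Ind}(\mathsf{Ban}^{na}_R)$, where the ambient monoidal structure $\ootimes$ is $\wotimes^{na}_R$.

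Next I would unpack what the strictness condition says in each case. Flatness over the monoidal unit $R$ holds throughout: $R[x]$ carries the fine ind-structure; the Tate algebras $R\langle\tfrac{x}{r}\rangle$ and $R\langle\tfrac{x}{\rho}\rangle$ are flat as non-archimedean ind-Banach $R$-modules (recorded in the introduction and in Section \ref{sec:An}); and $R[[x]]$ is nuclear, hence flat, over $R$. For the diagonal Koszul complex one uses the decompositions of $C \wotimes^{na}_R C$: $R[x]\wotimes^{na}_R R[x]\cong R[y,z]$ by \eqref{eqn:decomPolyNA}, $R\langle\tfrac{x}{r}\rangle\wotimes^{na}_R R\langle\tfrac{x}{r}\rangle\cong R\langle\tfrac{y}{r},\tfrac{z}{r}\rangle$ by \eqref{eqn:decomTate} (and likewise for $\rho$), and $R[[x]]\wotimes^{na}_R R[[x]]\cong R[[y,z]]$ by \eqref{eqn:decomFormNA}. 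Under these identifications the Koszul differential, multiplication by $x\otimes 1 - 1\otimes x$, becomes multiplication by $y - z$, and its categorical cokernel is the diagonal quotient, which is isomorphic to $C$ (the map $\pi$ with $\pi(g)(x) = g(x,x)$ has the bounded section $\iota$ with $\iota(g)(y,z) = g(y)$, as noted in Section \ref{sec:An}). Hence the strictness condition for $(C, x)$ is precisely the strict exactness of the diagonal sequence for $C$, which is Lemma \ref{lem:poly_diag_strict} for $C = R[x]$, Lemma \ref{lem:TateDiagStrict} for $C = R\langle\tfrac{x}{r}\rangle$ and $C = R\langle\tfrac{x}{\rho}\rangle$, and Lemma \ref{lem:formal_poly_diag_strict} for $C = R[[x]]$; each of these lemmas is stated to hold in $\mathsf{Ind}(\mathsf{Ban}^{na}_R)$ precisely because $R$ is non-archimedean.

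Feeding these facts into Theorem \ref{thm:main_1} then yields directly that each of $R[x] \to R\langle\tfrac{x}{r}\rangle$, $R\langle\tfrac{x}{r}\rangle \to R\langle\tfrac{x}{\rho}\rangle$ and $R\langle\tfrac{x}{\rho}\rangle \to R[[x]]$ is a homotopy epimorphism in $\mathsf{Comm}(\mathsf{Ind}(\mathsf{Ban}^{na}_R))$, which is the assertion. (Alternatively, once $R[x]\to R\langle\tfrac{x}{r}\rangle$, $R[x]\to R\langle\tfrac{x}{\rho}\rangle$ and $R[x]\to R[[x]]$ are known, the intermediate arrows also follow from the two-out-of-three property of Proposition \ref{prop:two_out_of_three}; but the direct route is cleaner.) I do not anticipate a genuine obstacle here: the analytic substance — strictness of multiplication by $y - z$ on the two-variable Tate and formal power series algebras — has already been isolated in Lemmas \ref{lem:TateDiagStrict} and \ref{lem:formal_poly_diag_strict}, so what remains is the essentially bookkeeping task of matching the abstract Koszul differential with those explicit maps under the tensor-product decompositions, together with the trivial observation that the section $\iota$ is bounded.
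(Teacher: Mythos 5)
Your proof is correct and follows essentially the same approach as the paper: the paper's (very terse) proof also reduces the claim to Theorem \ref{thm:main_1} via the strictness of the diagonal sequences, citing Lemma \ref{lem:TateDiagStrict} for the Tate algebras (with the strictness of the diagonal sequences for $R[x]$ and $R[[x]]$ at the endpoints of the chain coming from Lemmas \ref{lem:poly_diag_strict} and \ref{lem:formal_poly_diag_strict}, as already used in the two preceding corollaries). Your write-up is simply a more complete unpacking of the same argument, making explicit the flatness over $R$, the identifications $C\wotimes^{na}_R C \cong C_2$ via the decomposition formulas, and the matching of the abstract Koszul differential $x\otimes 1 - 1\otimes x$ with multiplication by $y-z$.
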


\begin{proof}
The strictness condition follows from Lemma \ref{lem:TateDiagStrict}.
\end{proof}

\begin{corollary}\label{cor:Stein_isocohomological}
For any radii \(r>p>0\), the maps \[R[x] \to \mathcal{O}_{R}(D^{1}_r) \to  \mathcal{O}_{R}(D^1_p) \to R[[x]]\] induced by canonical inclusions \(R[x] \to R\{\frac{x}{s}\}\) for \(s>r\) are homotopy epimorphisms of commutative monoids in the monoidal category  \(\mathsf{Ind}(\mathsf{Ban}_R)\). If $R$ is non-archimedean, the same holds in the category  \(\mathsf{Ind}(\mathsf{Ban}^{na}_R)\).
\end{corollary}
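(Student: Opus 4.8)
The plan is to deduce each of the three arrows from Theorem~\ref{thm:main_1}, taking the variable $x$ as the distinguished element in the sense of Definition~\ref{def:element}. So for every algebra $C$ appearing in the chain --- namely $R[x]$, $\mathcal{O}_{R}(D^{1}_r)$, $\mathcal{O}_{R}(D^{1}_p)$, and $R[[x]]$ --- I would check that the pair $(C,x)$ satisfies the strictness condition of Definition~\ref{def:strictness_condition}. Since the canonical inclusions in the chain all send $x$ to $x$, Theorem~\ref{thm:main_1} then applies to each consecutive pair and yields that $R[x]\to\mathcal{O}_{R}(D^{1}_r)$, $\mathcal{O}_{R}(D^{1}_r)\to\mathcal{O}_{R}(D^{1}_p)$, and $\mathcal{O}_{R}(D^{1}_p)\to R[[x]]$ are homotopy epimorphisms; the composites then follow from the two-out-of-three property of Proposition~\ref{prop:two_out_of_three}, although this is not needed for the statement as written.

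To verify the strictness condition I would first record flatness over $R$: this holds for $R[x]$ since it is a filtered colimit (in the fine ind-structure) of free finite-rank Banach $R$-modules and filtered colimits are strictly exact, and it holds for $\mathcal{O}_{R}(D^{1}_r)$, $\mathcal{O}_{R}(D^{1}_p)$, $R[[x]]$ because each is nuclear over $R$, as recalled above. Next, under the monoidal identifications $R[x]\wotimes_{R}R[x]\cong R[x,y]$ (Equation~\ref{eqn:decomPoly}), $\mathcal{O}_{R}(D^{1}_r)\wotimes_{R}\mathcal{O}_{R}(D^{1}_r)\cong\mathcal{O}_{R}(D^{2}_{(r,r)})$ (Equation~\ref{eqn:decomStein}), and $R[[x]]\wotimes_{R}R[[x]]\cong R[[x,y]]$ (Equation~\ref{eqn:decomForm}), the diagonal Koszul complex $C\wotimes_{R}C\to C\wotimes_{R}C\to C$, whose first map is multiplication by $x\otimes 1-1\otimes x$ and whose second map is the multiplication (diagonal) map $f\mapsto f(x,x)$, becomes precisely the two-variable diagonal sequence whose strict exactness is asserted in Lemma~\ref{lem:poly_diag_strict}, Lemma~\ref{lem:HoloStrict}, and Lemma~\ref{lem:formal_poly_diag_strict} respectively. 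Each of those lemmas states exactly that multiplication by $y-z$ is a strict monomorphism and that the cokernel is $C$ via $f\mapsto f(x,x)$, which is the content of the strictness condition.

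The substantive work has already been carried out in those lemmas --- in particular Lemma~\ref{lem:HoloStrict}, whose proof is the delicate estimate showing that division by $y-z$ is bounded on the two-variable open-polydisc algebra --- so the only obstacle remaining here is bookkeeping: one must confirm that $x$ genuinely defines an element $R\to\mathcal{O}_{R}(D^{1}_r)$ (it does, since $x$ has finite norm $s$ in every $R\{\tfrac{x}{s}\}$ for $s>r$, and the limit map defining $\mathcal{O}_{R}(D^{1}_r)$ sends $x$ to $x$), and that under each of the monoidal isomorphisms above the operator $x\otimes 1-1\otimes x$ corresponds to multiplication by $y-z$, so that the hypotheses of Theorem~\ref{thm:main_1} are matched literally. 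One could alternatively route the argument through the dagger algebras using Remark~\ref{rem:AnotherWay} together with Corollary~\ref{cor:polynomial_dagger_isocohomological}, but invoking Lemma~\ref{lem:HoloStrict} directly is the shortest path.
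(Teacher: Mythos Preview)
Your proposal is correct and follows exactly the paper's approach: the paper's proof is the single line ``Follows from Lemma~\ref{lem:HoloStrict},'' which is shorthand for precisely the argument you spell out---apply Theorem~\ref{thm:main_1} with element $x$, the strictness condition for the Stein algebras being Lemma~\ref{lem:HoloStrict} (and for $R[x]$, $R[[x]]$ the already-established Lemmas~\ref{lem:poly_diag_strict} and~\ref{lem:formal_poly_diag_strict}). Your write-up simply makes explicit the flatness, the monoidal decompositions, and the matching of $x\otimes 1-1\otimes x$ with multiplication by $y-z$, all of which the paper leaves to the reader.
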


\begin{proof}
Follows from Lemma \ref{lem:HoloStrict}.
\end{proof}

\begin{theorem}\label{thm:polynomial_dagger_isocohomological2}
For any Banach ring \(R\) and any multiradius \(0 \leq r < \infty\), the canonical maps \[R[x_1,\dots,x_n] \to R \{\frac{x_1}{r_1}, \dots, \frac{x_n}{r_n} \}^\dagger \to  R[[x_1,\dots, x_n]]\] are homotopy epimorphism of commutative monoids in the monoidal category  \(\mathsf{Ind}(\mathsf{Ban}_R)\). Similarly, the maps, \[R[x_1, \dots, x_n] \to \mathcal{O}_{R}(D^{n}_{\bold{r}}) \to R[[x_1,\dots, x_n]]\] for  \(0 < r \leq \infty\), and \(R[x_1,\dots, x_n] \to R[[x_1,\dots, x_n]]\) are homotopy epimorphisms of commutative monoids in the monoidal category  \(\mathsf{Ind}(\mathsf{Ban}_R)\). For any multi-radii \(0 \leq t< r < p \leq \infty\) the maps \[\mathcal{O}_{R}(D^{n}_{\bold{p}}) \to R \{\frac{x_1}{r_1}, \dots, \frac{x_n}{r_n} \}^\dagger \to  R \{\frac{x_1}{t_1}, \dots, \frac{x_n}{t_n} \}^\dagger\] are homotopy epimorphisms of commutative monoids in the monoidal category  \(\mathsf{Ind}(\mathsf{Ban}_R)\). For any multi-radii \(0 < \bold{u}< \bold{p} < \bold{r} < \infty\), the maps 
\[R \{\frac{x_1}{r_1}, \dots, \frac{x_n}{r_n} \}^\dagger \to \mathcal{O}_{R}(D^{n}_{\bold{p}}) \to \mathcal{O}_{R}(D^{n}_{\bold{u}})\] are homotopy epimorphisms. If $R$ is non-archimedean, all of these are also  homotopy epimorphisms of commutative monoids in the monoidal category  \(\mathsf{Ind}(\mathsf{Ban}^{na}_R)\). Let $R$ be a non-archimedean Banach ring and $\rho< r$ a multi-radius. Then the maps 
\[R[x_1, \dots, x_n] \to R\langle \frac{x_1}{r_1}, \dots, \frac{x_n}{r_n} \rangle \to R\langle \frac{x_1}{\rho_1}, \dots, \frac{x_n}{\rho_n} \rangle \to R[[x_1, \dots, x_n]]
\] are homotopy epimorphisms of commutative monoids in the category  \(\mathsf{Ind}(\mathsf{Ban}^{na}_R)\).

\end{theorem}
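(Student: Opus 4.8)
The plan is to reduce the whole statement to the one-variable case, apply Theorem \ref{thm:main_1}, and then assemble the composite chains by two-out-of-three.

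First I would observe that every arrow appearing in the statement is, under the decomposition isomorphisms \eqref{eqn:decomPoly}, \eqref{eqn:decomDagger}, \eqref{eqn:decomStein} and \eqref{eqn:decomForm} (respectively their non-archimedean analogues \eqref{eqn:decomPolyNA}, \eqref{eqn:decomDaggerNA}, \eqref{eqn:decomSteinNA}, \eqref{eqn:decomTate} and \eqref{eqn:decomFormNA}), the $n$-fold completed tensor product over $R$ of the corresponding one-variable arrow; for the arrows relating dagger and Stein algebras of differing polyradii this uses in addition that the comparison maps of Lemmas \ref{lem:compare} and \ref{lem:colims} are compatible with these tensor decompositions and are the identity on each coordinate $x_i$. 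All of the algebras occurring here --- polynomial, dagger, Stein, Tate (for non-archimedean $R$) and formal power series --- are flat over $R$, as recorded in Section \ref{sec:An}. Hence Proposition \ref{prop:tensor_isocohomological_disc}, applied inductively one tensor factor at a time, reduces each $n$-variable assertion to the claim that the corresponding one-variable map is a homotopy epimorphism.

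Next I would treat the one-variable maps uniformly. Each one-variable arrow $A \to B$ among $R[x]$, $R\{\frac{x}{r}\}^\dagger$, $\mathcal{O}_R(D^1_p)$, $R\gen{\frac{x}{r}}$ and $R[[x]]$ carries $x$ to $x$, and exists for the indicated ordering of radii by Lemmas \ref{lem:compare} and \ref{lem:colims} together with the presentations \eqref{eqn:fps} and \eqref{eqn:Hol1var}. Moreover the pair $(C,x)$ satisfies the strictness condition of Definition \ref{def:strictness_condition}: via the decomposition isomorphisms the diagonal Koszul complex $C \wotimes_R C \to C \wotimes_R C \to C$ is precisely the diagonal sequence shown to be strictly exact in Lemma \ref{lem:poly_diag_strict} when $C = R[x]$, in Lemma \ref{lem:formal_poly_diag_strict} when $C = R[[x]]$, in Lemma \ref{lem:TateDiagStrict} when $C = R\gen{\frac{x}{r}}$, in Lemma \ref{lem:daggerdaigstrict} when $C = R\{\frac{x}{r}\}^\dagger$, and in Lemma \ref{lem:HoloStrict} when $C = \mathcal{O}_R(D^1_p)$, while flatness of $C$ over $R$ is again as above. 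Theorem \ref{thm:main_1} therefore applies to each such $A \to B$ and shows it is a homotopy epimorphism; the non-archimedean statements follow by running the identical argument inside $\mathsf{Ind}(\mathsf{Ban}^{na}_R)$, using the non-archimedean halves of the same lemmas and recalling that the ring structure on the Tate algebras is only in play when $R$ is non-archimedean.

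Finally, a composite of homotopy epimorphisms is again a homotopy epimorphism (a special case of Proposition \ref{prop:two_out_of_three}), so once each consecutive arrow in a chain such as $R[x_1,\dots,x_n]\to R\{\frac{x_1}{r_1},\dots,\frac{x_n}{r_n}\}^\dagger\to R[[x_1,\dots,x_n]]$ is known, every composite listed in the statement follows. The step I expect to require the most care is the purely organizational one: matching each of the many arrows in the list to the lemma supplying its strictness condition, and, for the arrows between dagger and Stein algebras of different polyradii, checking via Lemmas \ref{lem:compare} and \ref{lem:colims} that the relevant comparison map between a filtered colimit and a limit of disk algebras really exists and is the identity on coordinates --- everything genuinely substantive having already been carried out in the diagonal-sequence lemmas of Section \ref{sec:An}.
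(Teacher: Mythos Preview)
Your proposal is correct and follows essentially the same route as the paper: reduce to the one-variable case via the tensor decompositions and Proposition~\ref{prop:tensor_isocohomological_disc}, then handle each one-variable map by verifying the strictness condition of Definition~\ref{def:strictness_condition} via Lemmas~\ref{lem:poly_diag_strict}, \ref{lem:formal_poly_diag_strict}, \ref{lem:TateDiagStrict}, \ref{lem:daggerdaigstrict}, \ref{lem:HoloStrict} and invoking Theorem~\ref{thm:main_1}. The paper packages the one-variable step into Corollaries~\ref{cor:formal_isocohomological}--\ref{cor:Stein_isocohomological} rather than citing Theorem~\ref{thm:main_1} directly, but the content is identical.
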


\begin{proof}
By Corollaries \ref{cor:formal_isocohomological}, \ref{cor:polynomial_dagger_isocohomological}, \ref{cor:Tate_isocohomological}, and \ref{cor:Stein_isocohomological} we get the one-dimensional versions of these statements. To finish the proof we use Proposition \ref{prop:tensor_isocohomological_disc} applied to the decompositions \ref{eqn:decomPoly},  \ref{eqn:decomPolyNA}, \ref{eqn:decomTate}, \ref{eqn:decomForm}, \ref{eqn:decomFormNA}, \ref{eqn:decomDaggerNA}, \ref{eqn:decomDagger}, \ref{eqn:decomStein}, \ref{eqn:decomSteinNA}.\end{proof}
\begin{remark}\label{rem:hybrid}
By tensoring together with $\wotimes_{R}$ one dimensional objects like $R[x]$, $R[[x]]$, $R\{\frac{x}{r}\}^{\dagger}$ and $\mathcal{O}_{R}(D^{1}_r)$ and homotopy epimorphisms between them one gets all sorts of other hybrid analytifications $C, C'$ with homotopy epimorphisms 
$R[x_1, \dots, x_n] \to C \to C' \to R[[x_1, \dots, x_n]]$ in the general setting.  If $R$ is non-archimedean then by tensoring together $R[x]$, $R[[x]]$, $R\langle\frac{x}{r}\rangle$,  $R\{\frac{x}{r}\}^{\dagger}$ and $\mathcal{O}_{R}(D^{1}_r)$  and homotopy epimorphisms between them together with
$\wotimes^{na}_{R}$ one gets all sorts of other hybrid analytifications $C, C'$ with homotopy epimorphisms 
$R[x_1, \dots, x_n] \to C \to C' \to R[[x_1, \dots, x_n]]$ in the relavant category.
\end{remark}


\begin{example}
As a concrete example, the result above says that for \(R = \Z_p\), \(V = \Z_p^n\), the inclusions \(\Z_p[x_1,\dots x_n] \to \Z_p \gen{x_1,\dots,x_n}\) and \(\Z_p[x_1,\dots,x_n] \to \Z_p\{x_1,\dots,x_n\}^\dagger\) are homotopy epimorphisms. We will revisit this example in Section \ref{sec:Hochschild}. 
\end{example}

\subsection{Quotients}\label{quotients}

In this subsection, we extend Theorem \ref{thm:main_1} to \textit{derived quotients} of polynomial algebras in \(n\)-variables. The quotient is a homotopy invariant notion that is useful in its own right, and will also be used to define derived localizations.


\begin{definition}
 Suppose that $A$ is a simplicial commutative ring object in \(\mathsf{Ind}(\mathsf{Ban}_R)\), or a commutative ring object in complexes in \(\mathsf{Ind}(\mathsf{Ban}_R)\) in degrees less than or equal to zero and $a$ a degree zero element of $A$. Define 
 \[A\sslash a=A \wotimes^{\mathbb{L}}_{\mathbb{Z}[y]}\mathbb{Z},
 \]
  the maps being determined by $y \mapsto 0$ and $y \mapsto a$. Similarly, if $R$ and $A$ are non-archimedean and we chose to (we do not have to) work with the non-archimedean category, this quotient is defined by $A \wotimes^{na \mathbb{L}}_{\mathbb{Z}_{triv}[y]}\mathbb{Z}_{triv}$. If $A$ is a commutative ring object  in \(\mathsf{Ind}(\mathsf{Ban}_R)\), then by resolving $\mathbb{Z}$ by the complex $\mathbb{Z}[y] \stackrel{y}\longrightarrow \mathbb{Z}[y]$ we see that $A \sslash a$ can be thought of just the complex $A \stackrel{a}\longrightarrow A$ in degrees $-1$ and $0$ which is a commutative ring object in complexes of $R$-modules whose differential multiplies degree $-1$ elements by $a$. When this map is a strict monomorphism, then $A \sslash a=A/(a)$.  Define $A \sslash (a)=A \sslash a$ and by induction $A \sslash (a_1, \dots, a_n) = (A \sslash (a_1, \dots, a_{n-1})) \sslash a_n$. Equivalently, one can define this as $A \wotimes^{\mathbb{L}}_{\mathbb{Z}[y_1, \dots, y_n]}\mathbb{Z}$ and similarly in the non-archimedean context.
\end{definition}

In order to treat derived quotients and derived localizations in Section \ref{sec:derived_localisation}, we need to extend the notion of homotopy epimorphisms from algebras over a symmetric monoidal quasi-abelian category to commutative algebra objects in a sufficiently nice monoidal model category. More specifically, the higher categorical setup for the forthcoming work on derived analytic geometry \cite{Ben-Bassat-Kelly-Kremnizer:Perspective} is a \textit{homotopical algebraic context}, or HA-context, in the sense of \cite{toen2008homotopical}*{Section 1.1}.

\begin{definition}\label{def:derived_isocohomological}
Let \(\mathsf{M}\) be an HA-context whose monoidal structure is written as $\ootimes^{\mathbb{L}}$. A morphism of commutative monoids \(A \to B\) is called a \textit{homotopy epimorphism} if the canonical morphism \(B \ootimes_A^{\mathbb{L}} B \to B\) is a weak equivalence.
\end{definition}

The model categories of interest to us are the categories of complexes \(\mathsf{Ch}^{\leq 0}(\mathsf{Ind}(\mathsf{Ban}_R))\) in the setting where $R$ contains $\Q$, and simplicial objects \(\mathsf{s}(\mathsf{Ind}(\mathsf{Ban}_R))\), when \(R\) is an arbitrary Banach ring, with the projective model structure  \cite{Ben-Bassat-Kremnizer:Nonarchimedean_analytic}.  It easy to see that if \(A \to B\) is a homotopy epimorphism of ordinary commutative algebras in \(\mathsf{Ind}(\mathsf{Ban}_R)\), then it is a homotopy epimorphism in \(\mathsf{Comm}(\mathsf{s}(\mathsf{Ind}(\mathsf{Ban}_R))\), viewed as \(0\)-connective objects. 

\begin{proposition}\label{prop:special_quotients}
Suppose that $f:A\to B$ is a homotopy epimorphism of simplical commutative rings or commutative differential graded rings in \(\mathsf{Ind}(\mathsf{Ban}_R)\). Then for any $a_1, \dots, a_n \in A$, the natural morphism $A \sslash (a_1, \dots, a_n) \to B  \sslash (f(a_1), \dots, f(a_n))$ is a homotopy epimorphism. The analogous statement also holds if $R$ is non-archimedean and we work in \(\mathsf{Ind}(\mathsf{Ban}^{na}_R)\).
\end{proposition}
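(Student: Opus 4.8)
The plan is to exhibit $B \sslash (f(a_1),\dots,f(a_n))$ as the derived base change of $A \sslash (a_1,\dots,a_n)$ along $f$, and then to apply Proposition \ref{prop:derived_base_change}. Throughout, $\mathsf{M}$ denotes the monoidal model category of simplicial objects (resp. complexes in degrees $\leq 0$) of $\mathsf{Ind}(\mathsf{Ban}_R)$, so that $A$, $B$ live in $\mathsf{Comm}(\mathsf{M})$ and Proposition \ref{prop:derived_base_change} applies.

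First I would recall from the definition of the derived quotient that, writing $\ul a = (a_1,\dots,a_n)$,
\[
A \sslash \ul a \;\cong\; A \wotimes^{\mathbb{L}}_{\mathbb{Z}[y_1,\dots,y_n]} \mathbb{Z},
\]
where $\mathbb{Z}[y_1,\dots,y_n] \to A$ sends $y_i \mapsto a_i$ and $\mathbb{Z}[y_1,\dots,y_n] \to \mathbb{Z}$ sends every $y_i \mapsto 0$. Since $f$ is a ring homomorphism, each $f(a_i)$ is the image of $y_i$ under the composite $\mathbb{Z}[y_1,\dots,y_n] \to A \xrightarrow{f} B$, so $B \sslash (f(a_1),\dots,f(a_n)) \cong B \wotimes^{\mathbb{L}}_{\mathbb{Z}[y_1,\dots,y_n]} \mathbb{Z}$ with respect to this composite. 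Using associativity of $\wotimes^{\mathbb{L}}$ and the factorization of the structure map through $A$, there is then a canonical equivalence
\[
B \sslash (f(a_1),\dots,f(a_n)) \;\cong\; \bigl(A \wotimes^{\mathbb{L}}_{\mathbb{Z}[y_1,\dots,y_n]} \mathbb{Z}\bigr) \wotimes^{\mathbb{L}}_A B \;\cong\; (A \sslash \ul a) \wotimes^{\mathbb{L}}_A B,
\]
under which the natural morphism $A \sslash \ul a \to B \sslash (f(a_1),\dots,f(a_n))$ induced by $f$ becomes the unit $C \to C \wotimes^{\mathbb{L}}_A B$ of base change along $f$, where $C \defeq A \sslash \ul a$ is equipped with the canonical algebra map $A \to A \sslash \ul a$. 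Applying Proposition \ref{prop:derived_base_change} to the homotopy epimorphism $f \colon A \to B$ and the morphism $A \to C$ shows that $C \to C \wotimes^{\mathbb{L}}_A B$ is a homotopy epimorphism, which is the claim. In the non-archimedean case one repeats the argument verbatim with $\wotimes^{na \mathbb{L}}$ in place of $\wotimes^{\mathbb{L}}$ and $\mathbb{Z}_{triv}[y_1,\dots,y_n]$, $\mathbb{Z}_{triv}$ in place of $\mathbb{Z}[y_1,\dots,y_n]$, $\mathbb{Z}$, since nothing beyond the formal properties of the monoidal structure is used.

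The invocation of Proposition \ref{prop:derived_base_change} is purely formal, so the only point demanding care is the bookkeeping in the two displays: one must verify that the associativity equivalence for $\wotimes^{\mathbb{L}}$ identifies the morphism $A \sslash \ul a \to B \sslash (f(\ul a))$ obtained by applying $f$ to the Koszul-type resolutions with the base-change unit $C \to C \wotimes^{\mathbb{L}}_A B$, and that $\mathbb{Z}[y_1,\dots,y_n]$ and $\mathbb{Z}$ genuinely define commutative monoids in $\mathsf{M}$ so that all the relevant derived tensor products and the left derived base-change functor exist. Both were effectively arranged in the definition of $\sslash$, so spelling them out closes the argument; alternatively, one avoids the multivariable bookkeeping entirely by inducting on $n$ via $A \sslash (a_1,\dots,a_n) = (A \sslash (a_1,\dots,a_{n-1})) \sslash a_n$ and applying Proposition \ref{prop:derived_base_change} one variable at a time.
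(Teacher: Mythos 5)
Your proof is correct and takes essentially the same route as the paper's: both reduce $B \sslash (f(a_1),\dots,f(a_n))$ to $(A \sslash \ul a)\wotimes^{\mathbb{L}}_A B$ via associativity of the derived tensor product over $\mathbb{Z}[y_1,\dots,y_n]$, then invoke Proposition~\ref{prop:derived_base_change}. The only cosmetic difference is that the paper does the one-variable case and then inducts, while you treat all $n$ variables at once (and note the inductive alternative yourself).
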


\begin{proof} We will do the general case as the non-archimedean situation is similar. It is easy to see using the definitions that for $a$ an element of $A$ that $(A \sslash a)\wotimes^{\mathbb{L}}_{A}B \cong B \sslash f(a)$. Indeed,
\[(A \sslash a)\wotimes^{\mathbb{L}}_{A}B = (\mathbb{Z}  \wotimes^{\mathbb{L}}_{\mathbb{Z}[y]}A)\wotimes^{\mathbb{L}}_{A}B \cong  \mathbb{Z}  \wotimes^{\mathbb{L}}_{\mathbb{Z}[y]}(A\wotimes^{\mathbb{L}}_{A}B) \cong  \mathbb{Z}  \wotimes^{\mathbb{L}}_{\mathbb{Z}[y]}B =  B \sslash f(a).
\]
Furthermore, the natural morphism $A \sslash a \to B \sslash f(a)$ is induced from $f$ by derived tensoring over $A \to A \sslash a$.   Now we just use the fact (Proposition \ref{prop:derived_base_change}) that derived tensoring a homotopy epimorphism along any morphism produces another homotopy epimorphism. The proof follows from a simple induction argument.
\end{proof}

\begin{remark} If $A$ is a commutative ring object  in \(\mathsf{Ind}(\mathsf{Ban}_R)\), then working in the category of commutative ring object in complexes in \(\mathsf{Ind}(\mathsf{Ban}_R)\) in degrees less than or equal to zero $A \sslash (a_1, \dots, a_{n})$ is simply the ordinary Koszul resolution
\[(A \stackrel{a_1}\longrightarrow A)\wotimes_{A} \cdots \wotimes_{A}(A \stackrel{a_n}\longrightarrow A).
\]
This follows from iterating the equivalence 
\[(A \wotimes^{\mathbb{L}}_{\mathbb{Z}[y]}\mathbb{Z})\wotimes^{\mathbb{L}}_{A}(A \wotimes^{\mathbb{L}}_{\mathbb{Z}[z]}\mathbb{Z}) \cong (A \wotimes^{\mathbb{L}}_{\mathbb{Z}[y]}\mathbb{Z})\wotimes^{\mathbb{L}}_{\mathbb{Z}[z]}\mathbb{Z}.\]
\end{remark}

\begin{example}\label{ex:pad}
Notice that the map $\mathbb{Z}\{x\} \to \mathbb{Z}\{2x\}$ is not a homotopy epimorphism. Indeed, we can resolve $\mathbb{Z}\{2x\}$ as $\mathbb{Z}\{x\}\{y\} \stackrel{y-2x}\longrightarrow \mathbb{Z}\{x\}\{y\}$ where the map is actually a strict monomorphism unlike in Remark \ref{rem:wondered}. However when we apply $(-) \wotimes_{\mathbb{Z}\{x\}} \mathbb{Z}\{2x\}$ to this complex we get a non-strict morphism similarly to Remark \ref{rem:wondered}. In a similar way $\mathbb{Z} \to \mathbb{Z}_{p}$ is not a homotopy epimorphism but these can be replaced with homotopy epimorphisms $\mathbb{Z}\{x\} \to \mathbb{Z}\{2x\}^{\dagger}$ and $\mathbb{Z} \to \mathbb{Z}^{\dagger}_{p}$. In fact, this is consistent with the results above on quotients as the quotient map of the homotopy epimorphism $\mathbb{Z}\{x\} \to \mathbb{Z}\{px\}^{\dagger}$ by $(x-p)$ is the map $\mathbb{Z} \to \mathbb{Z}^{\dagger}_{p}$ while the quotient map of  $\mathbb{Z}\{x\} \to \mathbb{Z}\{px\}$ by $(x-p)$ is the map $\mathbb{Z} \to \mathbb{Z}_{p}$ which is not a homotopy epimorphism. Multipilication by $x-p$ is actually a strict monomorphism both on $\mathbb{Z}\{px\}$ and $\mathbb{Z}^{\dagger}_{p}\{px\}$, and yet not on $\mathbb{Z}_{p}\{px\}$. For a discussion of \[\mathbb{Z}_{p}^{\dagger}=\mathbb{Z}\{px\}^{\dagger}/(x-p)=\mathbb{Z}\{px\}^{\dagger} \sslash (x-p)=\mathbb{Z}\{px\}^{\dagger}\wotimes^{\mathbb{L}}_{\mathbb{Z}\{y\}}\mathbb{Z}\] where the maps are given by $y \mapsto 0$ and $y \mapsto x-p$ see \cite{ben2020fr}.
\end{example}

\subsection{A version for underived quotients in the rational case}

While Proposition \ref{prop:special_quotients} deals with derived quotients, it is intractable in general to verify conditions under which we have \(A \sslash (a_1,\cdots,a_n) \cong A/(a_1,\cdots,a_n)\). In this subsection, we specialise to the setting of non-trivially valued Banach fields, which we denote by \(k\), to distinguish it from Banach rings without any further assumptions. Herein, we can use a slightly different approach to prove that analytifications of polynomial algebras are homotopy epimorphisms. Throughout this subsection, we work in the categories \(\mathsf{Born}_k\) and \(\mathsf{CBorn}_k\) of (complete) convex, separated bornological \(k\)-vector spaces. The inclusion functor \(\mathsf{CBorn}_k \to \mathsf{Born}_k\) has a left adjoint, namely, the \textit{completion functor} \(\mathsf{Born} \to \mathsf{CBorn}_k\). Furthermore, by Proposition \ref{prop:bornologies_ind_systems}, these two categories can be identified with the categories \(\mathsf{Ind}^m(\mathsf{Norm}_k^{1/2})\) and \(\mathsf{Ind}^m(\mathsf{Ban}_k)\) of inductive systems of semi-normed and Banach spaces with monomorphic structure maps, using the \textit{dissection functor} ( see \cite{bambozzi}*{Equation 2.1.3.1, Definition 2.1.4}).  We first recall some basic results:

\begin{lemma}\label{lem:fine_implies_complete}
Any \(k\)-vector space with the fine bornology is a complete bornological \(k\)-vector space. 
\end{lemma}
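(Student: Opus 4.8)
The plan is to exhibit $M$ equipped with the fine bornology as a monomorphic (regular) filtered colimit of Banach $k$-vector spaces, which is exactly what it means to be an object of $\mathsf{CBorn}_k$ under the identification $\mathsf{CBorn}_k \simeq \mathsf{Ind}^m(\mathsf{Ban}_k)$ of Proposition \ref{prop:bornologies_ind_systems}. Let $\mathcal S$ denote the poset of finite-dimensional $k$-linear subspaces $S \subseteq M$, ordered by inclusion. This poset is filtered: the zero subspace is a lower bound, and for $S_1, S_2 \in \mathcal S$ the sum $S_1 + S_2$ is again finite-dimensional and contains both.

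First I would record the one external input: over a complete, non-trivially valued field $k$ — the standing assumption of this subsection — every finite-dimensional normed $k$-vector space is complete (and all norms on it are equivalent), so each $S \in \mathcal S$, with any norm, is a Banach $k$-vector space, whose von Neumann bornology coincides with its fine bornology. Moreover, for $S_1 \subseteq S_2$ the inclusion $S_1 \hookrightarrow S_2$ is bounded, injective, with closed (finite-dimensional) range, hence a strict monomorphism in $\mathsf{Ban}_k$. Thus $(S)_{S \in \mathcal S}$ is a monomorphic inductive system of Banach $k$-vector spaces.

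Next I would identify $M$, as a bornological $k$-vector space with the fine bornology, with the colimit $\varinjlim_{S \in \mathcal S} S$ computed in $\mathsf{Born}_k$. On underlying sets this is $\bigcup_{S \in \mathcal S} S = M$, and the colimit bornology declares $B \subseteq M$ bounded precisely when $B \subseteq S$ for some $S \in \mathcal S$ and $B$ is bounded in the normed space $S$ — which is verbatim the description of the fine bornology from Example \ref{ex:fine_bornology}. In particular this bornology is separated, since a bounded linear subspace would be a bounded linear subspace of some finite-dimensional normed space, hence $\{0\}$. Since the transition maps are monomorphisms, the colimit requires no completion and lands in the essential image of $\mathsf{Ind}^m(\mathsf{Ban}_k) \hookrightarrow \mathsf{CBorn}_k$; that is, $M$ with the fine bornology is complete.

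I expect no genuine obstacle here. The only substantive fact used is the completeness of finite-dimensional normed spaces over a complete valued field, which is standard for both the archimedean and non-archimedean cases; everything else is bookkeeping, chiefly checking that the colimit (equivalently, ind-object) bornology on $\varinjlim_{S} S$ matches the fine bornology of Example \ref{ex:fine_bornology}. As a sanity check one may argue instead with Mackey–Cauchy nets: such a net lies and is Cauchy in some Banach disk $M_B$ with $B$ bounded, but $B$ bounded in the fine bornology forces $M_B$ finite-dimensional, hence complete, so the net Mackey-converges in $M$.
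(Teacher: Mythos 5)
The paper states this lemma without proof, treating it as a standard fact, so there is no "paper's proof" to compare against; your argument supplies the missing justification and is correct. Your route — write $M$ with the fine bornology as the monomorphic filtered colimit $\varinjlim_{S\in\mathcal S} S$ over finite-dimensional subspaces, observe each such $S$ is Banach because $k$ is a complete non-trivially valued field, check that the colimit bornology in $\mathsf{Born}_k$ coincides with the fine bornology of Example~\ref{ex:fine_bornology}, and invoke $\mathsf{CBorn}_k\simeq\mathsf{Ind}^m(\mathsf{Ban}_k)$ from Proposition~\ref{prop:bornologies_ind_systems} — is the natural one, and the alternative via Mackey--Cauchy nets is also sound. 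One tiny wording slip: when verifying that $\mathcal S$ is filtered, "the zero subspace is a lower bound" is not what is needed; what you want is that $\mathcal S$ is nonempty (which $\{0\}\in\mathcal S$ provides) together with the upper-bound property you correctly supply via $S_1+S_2$. Also, when you write "with any norm, is a Banach $k$-vector space, whose von Neumann bornology coincides with its fine bornology," it is worth being explicit that this equivalence of norms and the completeness both rely on $k$ being complete and non-trivially valued, which is the standing hypothesis of the subsection; without non-triviality of the valuation the norm-equivalence statement fails. These are cosmetic points; the proof is correct.
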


Any \(k\)-linear map \(f \colon M \to N\) between \(k\)-vector spaces is automatically bounded, when we equip \(M\) and \(N\) with the fine bornology. This gives us a functorial assignment from the category of \(k\)-vector spaces to the category of complete bornological \(k\)-vector spaces. We actually have more:

\begin{lemma}
The fine bornology functor is exact; that is, it maps an exact sequence of \(k\)-vector spaces to a strictly exact sequence of complete bornological \(k\)-vector spaces. 
\end{lemma}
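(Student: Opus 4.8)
The plan is to reduce to finite dimensions, where strictness is automatic, and then transport the conclusion to the general case by a filtered colimit argument via Lemma~\ref{lem:filtered_strict}. First I would record the description of the fine bornology as an ind-object: for a $k$-vector space $M$, let $\mathcal{S}(M)$ be the filtered poset of finite-dimensional $k$-subspaces of $M$. Each $S \in \mathcal{S}(M)$ is a Banach $k$-space in a canonical way (all norms on a finite-dimensional space over a complete valued field are equivalent), the inclusions $S \hookrightarrow S'$ are bounded, and under the identification $\mathsf{CBorn}_k \cong \Ind^m(\mathsf{Ban}_k)$ of Proposition~\ref{prop:bornologies_ind_systems} the space $M$ with its fine bornology is the ind-object represented by the filtered diagram $(S)_{S \in \mathcal{S}(M)}$. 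Thus the fine bornology functor factors through $\Ind(\mathsf{Ban}_k)$ and sends $M$ to a monomorphic filtered colimit of finite-dimensional Banach spaces.

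Now let $0 \to M' \xrightarrow{f} M \xrightarrow{g} M'' \to 0$ be an exact sequence of $k$-vector spaces. I would exhibit it as a filtered colimit, over the single index category $\mathcal{S}(M)$, of the short exact sequences of finite-dimensional spaces
\[
0 \longrightarrow N \cap f(M') \longrightarrow N \longrightarrow g(N) \longrightarrow 0 .
\]
This assignment is functorial in $N$, the transition maps are inclusions, and the colimit over $\mathcal{S}(M)$ recovers $M'$, $M$, $M''$ with their fine bornologies: every finite-dimensional subspace of $f(M')$ lies in $N \cap f(M')$ for a suitable $N$, and since $g$ is surjective every finite-dimensional subspace of $M''$ has the form $g(N)$, so the relevant subdiagrams are cofinal. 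Each displayed finite-dimensional sequence is strictly exact in $\mathsf{Ban}_k$: an injection of finite-dimensional normed spaces has closed range, hence is a strict monomorphism; a surjection is open, hence a strict epimorphism; and middle exactness is exactness of underlying vector spaces.

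Finally I would apply Lemma~\ref{lem:filtered_strict} in $\Ind(\mathsf{Ban}_k)$, which is cocomplete and has filtered colimits commuting with kernels, hence exact filtered colimits preserving strictness: the colimit of the above diagram of strictly exact sequences is strictly exact, and by Lemma~\ref{lem:fine_implies_complete} its terms are complete, so the sequence lies in $\mathsf{CBorn}_k$ as required. I do not expect a serious obstacle, as the content is entirely the equivalence of norms in finite dimensions; the only points needing care are the cofinality claim for $N \mapsto g(N)$, where surjectivity of $g$ enters, and the bookkeeping that the whole exact sequence is presented as a colimit over one fixed filtered category so that Lemma~\ref{lem:filtered_strict} applies to it as a morphism of diagrams. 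Alternatively one can bypass colimits and check directly that the subspace bornology induced on $f(M')$ from $M$ equals its fine bornology and that the quotient bornology on $M''$ equals its fine bornology; both reductions again come down to equivalence of norms in finite dimensions together with lifting a finite-dimensional subspace of $M''$ through $g$.
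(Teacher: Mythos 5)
Your main argument is correct but takes a genuinely different route from the paper. The paper's proof is a one-line direct observation: for the fine bornology, a bornological embedding (resp.\ quotient map) in the sense of Meyer is the same as an algebraic injection (resp.\ surjection) of $k$-vector spaces, because the subspace and quotient bornologies induced from a fine bornology are again fine; hence a short exact sequence of $k$-vector spaces is automatically strictly exact in $\mathsf{CBorn}_k$. This is exactly the ``alternative'' route you sketch in your closing sentences but do not develop. Your primary argument instead unwinds the fine bornology into its ind-presentation by finite-dimensional subspaces, presents the whole short exact sequence as a single filtered colimit indexed by $\mathcal{S}(M)$, verifies the two cofinality claims (the one for $g(N)$ using surjectivity of $g$ to lift a basis), observes that finite-dimensional sequences over a complete non-trivially valued field are strictly exact in $\mathsf{Ban}_k$, and then invokes Lemma~\ref{lem:filtered_strict}. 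Both are sound. The paper's argument is shorter and more conceptual; yours is more constructive and makes visible exactly how the fine bornology sits as a monomorphic ind-object, which is useful elsewhere in the paper, but it carries more bookkeeping (the need to index all three terms over the same filtered poset, and the two separate cofinality checks) to reach the same conclusion.
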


\begin{proof}
A bornological embedding (resp. bornological quotient map) in the sense of \cite{Meyer:HLHA}*{Definition 1.74} is equivalent to an ordinary injective (resp. surjective) \(k\)-linear map between vector spaces. 
\end{proof}

\begin{lemma}\label{lem:tensor_completed}
Let \(M\) be a bornological \(k\)-vector space with the fine bornology, and let \(N\) be an arbitrary complete bornological \(k\)-vector space. Then \(M \otimes_k N \cong M \wotimes_k N\), where the left hand side denotes the incomplete bornological tensor product.
\end{lemma}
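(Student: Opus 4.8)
The plan is to prove that the incomplete bornological tensor product $M \otimes_k N$ is \emph{already} complete; since $M \wotimes_k N$ is by definition the completion of $M \otimes_k N$ and the completion functor $\mathsf{Born}_k \to \mathsf{CBorn}_k$ restricts to the identity on $\mathsf{CBorn}_k$, this immediately yields the claimed isomorphism. The first reduction is to the case in which $M$ is finite-dimensional: by the definition of the fine bornology (Example~\ref{ex:fine_bornology}), $M$ is the filtered colimit $\varinjlim_{S} S$ in $\mathsf{Born}_k$ of its finite-dimensional subspaces $S$, each with its canonical bornology, and all structure maps are split monomorphisms. Since $\mathsf{Born}_k$ is closed symmetric monoidal, $(-)\otimes_k N$ is a left adjoint, hence preserves this colimit, so $M \otimes_k N \cong \varinjlim_{S}(S \otimes_k N)$ along injective structure maps.

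Next I would treat the finite-dimensional pieces. For $S \cong k^{n}$ one checks at the level of bornologies that $S \otimes_k N \cong N^{\oplus n}$ with the product bornology, since the projective bornology on $k^{n}\otimes_k N$ is cofinally generated by products of a bounded disk in $k^{n}$ with a bounded disk in $N$. A finite direct sum of complete bornological spaces is complete, so each $S \otimes_k N$ is complete; in particular $S \otimes_k N \cong S \wotimes_k N$. It then remains to see that the filtered colimit $\varinjlim_{S}(S \otimes_k N)$, formed in $\mathsf{Born}_k$ along monomorphic structure maps, is again complete. For this one uses that, by filteredness and injectivity, any bounded subset of the colimit lies in and is bounded in a single term $S \otimes_k N$, where it is absorbed by a completant bounded disk, and that disk remains a completant bounded disk in the colimit; hence the colimit has a cofinal system of bounded Banach disks and is complete. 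Equivalently, in the description $\mathsf{CBorn}_k \simeq \mathsf{Ind}^m(\mathsf{Ban}_k)$ from Proposition~\ref{prop:bornologies_ind_systems}, the system $(S \otimes_k N)_S$ is manifestly an essentially monomorphic ind-system of Banach spaces, so its colimit lies in $\mathsf{CBorn}_k$. Combining, $M \otimes_k N$ is complete and therefore $M \wotimes_k N \cong \widehat{M \otimes_k N} \cong M \otimes_k N$.

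The step I expect to demand the most care is the completeness of the monomorphic filtered colimit of the complete spaces $S \otimes_k N$, together with the bornological identification $S \otimes_k N \cong N^{\oplus n}$; both are standard manipulations in the bornological formalism, but they genuinely use that $N$ is complete --- without completeness of $N$ one only recovers the tautology $M \wotimes_k N = \widehat{M \otimes_k N}$ with no simplification. I would also take care, at the outset, that the relevant colimits of separated bornological spaces remain separated, which here is automatic because the finite-dimensional pieces are separated and the fine bornology on $M$ is known to be separated.
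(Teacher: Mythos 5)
Your proof is correct and takes essentially the same approach as the paper's: write $M$ as a filtered colimit of finite-dimensional subspaces, observe that tensoring such a subspace with $N$ yields a finite direct sum of copies of $N$ (hence complete), and pass to the filtered colimit. The only difference is cosmetic: you keep $N$ whole whereas the paper additionally decomposes it as an $\mathsf{Ind}^m(\mathsf{Ban}_k)$-system, and you spell out---via the completant-disc / monomorphic-ind-system argument---why the filtered colimit of the complete spaces $S\otimes_k N$ remains complete, a point the paper's proof leaves implicit.
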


\begin{proof}
By definition, we can write \(M \cong \underset{i \in I}\colim M_i\), where \(M_i \cong k^{n_i}\) is a finite-dimensional \(k\)-vector subspace of $M$. Then \(M \wotimes_k N\) is the inductive limit of \(M_i \wotimes_k N_j \cong N_j^{n_i}\), where \(N \cong \underset{j \in J}\colim N_j\). Since the inductive limit functor commutes with the tensor product functor, 
\[M \otimes_k N \cong \underset{(i,j) \in I \times J} \colim M_i \otimes_k N_j \cong \underset{(i,j) \in I \times J}\colim N_j^{n_i} \cong M \wotimes_k N \]as required. \qedhere 
\end{proof}

Recall that in any quasi-abelian category \(\fC\), a chain complex \((C,d)\) over \(\fC\) is called \textit{exact} if the canonical diagram \[\ker(d) \into C \to \ker(d)\] is a strict monomorphism-strict epimorphism pair.  That is, the above diagram is strictly exact. Under suitable finiteness assumptions, which we make precise, we can often show that a linear surjection between bornological vector spaces is automatically a strict epimorphism. This will be used to show that if a chain complex of bornological vector spaces is algebraically exact, it is also strictly exact.

\begin{definition}\cite{Meyer:HLHA}*{Definition 1.158}
A bornological vector space \(V\) is said to be \textit{countably generated} if its bornology contains a cofinal sequence, which we can take to be increasing. This means that there is an increasing sequence of  bounded subsets \((B_n)_{n \in \N}\) such that for any bounded subset \(B \subseteq V\), there is an \(n\) such that \(B_n\) absorbs \(B\). 
\end{definition} 

\noindent
The main source of countably generated bornological vector spaces is Banach spaces and countable nductive systems of Banach spaces. The latter are sometimes known as \textit{LB-spaces}. We will use the following version of Buchwalter's Theorem:

\begin{theorem}\cite{closedgraph}*{Theorem 4.9}\label{lem:Closed-graph}
Let \(f \colon A \to B\) be a linear surjection between complete bornological \(k\)-vector spaces. Assume that \(A\) has a countable generated bornology. Then \(f\) is a strict epimorphism.
\end{theorem}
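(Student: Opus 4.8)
The plan is to run a bornological open--mapping argument: the hypothesis that $A$ has a countably generated bornology lets us write $A$ as an increasing union of Banach disks, which is exactly what makes Baire's theorem applicable, and then an $\epsilon/2^n$ successive--approximation argument (using completeness of $A$) upgrades a Baire--category statement into an honest inclusion. Recall that a \emph{completant disk} in a bornological vector space is a bounded absolutely convex subset $D$ for which the associated gauge--normed space $V_D=\bigcup_n nD$ is a Banach space; in a complete bornological space the completant disks form a cofinal family. Since a surjective morphism in $\mathsf{CBorn}_k$ whose target carries the induced quotient bornology is a strict epimorphism (the bijective morphism $A/\ker f\to B$ is then a bornological isomorphism, so $A/\ker f$ is complete and hence is the coimage of $f$), it suffices to show that every completant disk $T\subseteq B$ is absorbed by $f(S)$ for some bounded $S\subseteq A$.

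First I would set up the source: using that $A$ is complete and countably generated, choose an increasing cofinal sequence $(B_n)_{n\in\N}$ of completant disks of $A$, so that each $V_{B_n}$ is Banach, the inclusions $V_{B_n}\hookrightarrow V_{B_{n+1}}$ are contractive, and $A=\bigcup_n V_{B_n}$ with the inductive--limit bornology. The core device is then an auxiliary family of Banach spaces. Fix a completant disk $T\subseteq B$ with Banach space $V_T$ and (after replacing $T$ by its closure in $V_T$) unit ball $T$, and for each $n$ set $G_n:=\{a\in V_{B_n}:f(a)\in V_T\}$ normed by $\|a\|_{G_n}:=\max(\|a\|_{V_{B_n}},\|f(a)\|_{V_T})$. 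Using that $f$ is bounded, that $V_{B_n}$ and $V_T$ are Banach, and that $B$ is separated (so bornological limits are unique), a routine check shows $(G_n,\|\cdot\|_{G_n})$ is a Banach space; moreover $G_n\hookrightarrow G_{n+1}$ is contractive, $f$ maps the unit ball $B(G_n)$ into $T\cap f(B_n)$, and $\bigcup_n f(G_n)=V_T\cap f(A)=V_T$.

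Now the Baire step. The closures $\overline{f(k\,B(G_m))}$, taken in $V_T$ over $m,k\in\N$, are closed absolutely convex subsets of $V_T$ whose union contains $\bigcup_m f(G_m)=V_T$; since $V_T$ is a Banach space, Baire's theorem produces $m,k$ for which $\overline{f(k\,B(G_m))}$ has nonempty interior, and balancedness and convexity then force $\epsilon\,T\subseteq\overline{f(B(G_m))}$ for some $\epsilon>0$. The standard successive--approximation argument --- representing $t\in\epsilon\,T$ as a series $\sum_j f(a_j)$ with $a_j\in 2^{-(j-1)}B(G_m)$ and summing in the Banach space $G_m$ --- then yields $\epsilon\,T\subseteq f(2\,B(G_m))\subseteq f(2\,B_m)$, hence $T\subseteq f(\tfrac{2}{\epsilon}B_m)$. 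Since $\tfrac{2}{\epsilon}B_m$ is bounded in $A$ this is exactly what we want, and as $T$ ranged over the cofinal family of completant disks of $B$, the bornology on $B$ is the quotient bornology, so $f$ is a strict epimorphism.

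I expect the main obstacle to be precisely the passage from the Baire/density information to an honest containment $T\subseteq f(\text{bounded set})$: a naive application of Baire only shows that $V_T\cap f(V_{B_m})$ is dense in $V_T$ for some $m$, which does not by itself give surjectivity onto $T$ of the image of a single bounded set. The renormed Banach spaces $G_n$ are what repackage the problem as genuine open--mapping theory between Banach spaces, and the $\epsilon/2^n$ series argument --- the one place where completeness of $A$ (via completeness of each $G_m$) is genuinely used --- is what converts a closure statement into an exact inclusion. The countable--base hypothesis on $A$ enters only to make the covering family $\{\overline{f(k\,B(G_m))}\}_{m,k}$ countable so that Baire applies; it is essential.
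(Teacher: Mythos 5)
Your proof is correct and follows the same Baire--category strategy as the paper, but it is considerably more careful in the place where the paper's sketch is weakest. The paper applies Baire to the family $\overline{f(S_i)}\cap M$ inside the Banach disk $M$, obtains an interior point, and then asserts (via the problematic claim that ``$\overline{f(S_j)}$ is a vector subspace'') that surjectivity forces $f(S_j)$ to cover $M$; the passage from a density statement to an honest containment $M\subseteq f(\lambda S_j)$ is not actually spelled out. Your version supplies exactly the missing machinery: the auxiliary Banach spaces $G_n=\{a\in V_{B_n}:f(a)\in V_T\}$ with the max norm repackage the situation as a genuine Banach-space open--mapping problem, and the $\epsilon/2^n$ successive-approximation step (summing in the complete space $G_m$) converts $\epsilon T\subseteq\overline{f(B(G_m))}$ into $\epsilon T\subseteq f(2B(G_m))\subseteq f(2B_m)$. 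This also makes transparent where each hypothesis enters --- countable generation of the bornology on $A$ makes the Baire covering countable, completeness of $A$ makes each $G_m$ Banach and lets the series converge, and separatedness of $B$ is what identifies the two limits of $f(a_j)$. In short: same approach, but your write-up closes a real gap in the paper's argument rather than merely paraphrasing it.
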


\begin{proof}
Let \(M \subseteq B\) be a bounded Banach subspace of \(B\). We need to show that there is a bounded Banach disc \(S \subseteq A\) such that \(f(S) = M\). By assumption, there is a countable basis \((S_n)_{n \in \N}\) of Banach discs for the bornology on \(A\). Furthermore, it can be arranged that for each \(n\), \(S_n \subseteq S_{n+1}\). Since \(f\) is a surjection, we have \(\bigcup_{i=0}^\infty (\overline{f(S_i)} \cap M) = M\). By the Baire Category Theorem, there is a \(j\) such that \(\overline{f(S_j)} \cap M\) has an interior point. Since \(\overline{f(S_j)}\) is a vector subspace, it spans the entire space \(M\). Finally, again by surjectivity, the interior point of \(\overline{f(S_j)} = M\) must lie in \(f(S_j)\), which again implies that the latter spans \(M\), as required. 
\end{proof}

\begin{corollary}\label{cor:algebraic_exactness_bornological}
Let \((C,d)\) be a chain complex of complete bornological \(k\)-modules, where for each \(n\), \(C_n\) has a countably generated bornology. Suppose \((C,d)\) is algebraically exact, it is also (strictly) exact relative to the quasi-abelian category structure on \(\mathsf{CBorn}_k\).
\end{corollary}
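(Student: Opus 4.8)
The plan is to deduce strict exactness directly from Buchwalter's theorem, Theorem \ref{lem:Closed-graph}, applied degreewise. Recall the definition of exactness recalled just above the statement: at each index $n$ we must show that the canonical diagram $\ker(d_n) \into C_n \to \ker(d_{n-1})$ is a strict monomorphism–strict epimorphism pair, where the second arrow is the corestriction $\bar d_n$ of $d_n$ (well defined since $d_{n-1}\circ d_n = 0$). The first arrow is a kernel, hence automatically a strict monomorphism, and its image is $\ker(d_n) = \ker(\bar d_n)$, the latter equality holding because $\ker(d_{n-1}) \into C_{n-1}$ is a monomorphism; so the only thing left to verify is that each corestriction $\bar d_n \colon C_n \to \ker(d_{n-1})$ is a strict epimorphism.

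To do this I would fix $n$ and work with $Z_{n-1} \defeq \ker(d_{n-1})$, computed in $\mathsf{CBorn}_k$ as the set-theoretic kernel with the subspace bornology; since $\mathsf{CBorn}_k$ is quasi-abelian, $Z_{n-1}$ is again a complete bornological $k$-vector space. Algebraic exactness of $(C,d)$ says precisely that the underlying linear map of $\bar d_n \colon C_n \to Z_{n-1}$ is surjective. Now $C_n$ is complete by hypothesis and its bornology is countably generated by hypothesis, so Theorem \ref{lem:Closed-graph} applies verbatim to the surjection $\bar d_n$ and shows it is a strict epimorphism. Running this over all $n$ and combining with the automatic part of the previous paragraph yields that $(C,d)$ is strictly exact in $\mathsf{CBorn}_k$. (Equivalently, one may phrase the conclusion as: each $d_n$ factors as the strict epimorphism $\bar d_n$ followed by the strict monomorphism $Z_{n-1} \into C_{n-1}$, hence is strict, and a strict algebraically exact complex is strictly exact.)

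I do not expect a genuine obstacle here: the entire content of the corollary is repackaged in Theorem \ref{lem:Closed-graph}, whose standing hypothesis of a countably generated source is exactly the finiteness condition imposed on the $C_n$. The only points needing a moment of care — all of them standard features of the quasi-abelian structure on $\mathsf{CBorn}_k$ — are the identification of kernels in $\mathsf{CBorn}_k$ with set-theoretic kernels carrying the subspace bornology, the fact that such a kernel is again complete, and the elementary remark that a kernel inclusion is a strict monomorphism with the expected underlying subobject.
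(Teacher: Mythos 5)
Your proof is correct and follows essentially the same route as the paper: identify strict exactness with the corestriction $C_n \to \ker(d_{n-1})$ being a strict epimorphism, note that algebraic exactness gives surjectivity, and invoke Theorem \ref{lem:Closed-graph} using the countably generated bornology on $C_n$. You spell out a few routine points (completeness of kernels, kernel inclusions being strict monomorphisms) that the paper leaves implicit, but the substance is identical.
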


\begin{proof}
The exactness of \(C\) means that for each \(n\), the canonical map \(C_{n+1} \to \ker(d_n)\) is a surjection. The hypothesis on \(C_n\) and Lemma \ref{lem:Closed-graph} implies the result. 
\end{proof}

This leads to the following result:

\begin{proposition}\label{prop:fine_implies_bornological}
Let $k$ be a non-trivially valued Banach field and \(A = k[x_1,\dots,x_n]/J\) be a finite-type commutative \(k\)-algebra, viewed as a complete bornological \(k\)-vector space with the fine bornology. Consider a morphism \(f \colon A \to B\) of commutative, complete bornological \(k\)-algebras.
\begin{itemize}
\item  Suppose \(B\) is a flat \(A\)-module for the algebraic tensor product \(\otimes_A\), and satisfies \(B \wotimes_A B \cong B\);
\item  the bornology of \(B\) is countably generated.
\end{itemize}
Then \(A \to B\) is a homotopy epimorphism in the category \(\mathsf{CBorn}_k\).  
\end{proposition}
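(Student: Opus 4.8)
The plan is to verify the defining property of a homotopy epimorphism directly, namely that the canonical map \(B\wotimes_A^{\mathbb{L}}B\to B\) is an isomorphism in \(\fD(B)\). Since \(B\wotimes_A B\cong B\) is assumed, the degree-zero homology is already correct, so everything reduces to producing a resolution of the diagonal that yields \emph{no higher homology}, strictly. The strategy is to resolve the diagonal bimodule over \(A\otimes_k A\) by finite free modules and transport this resolution into \(\mathsf{CBorn}_k\) through the fine bornology functor, exploiting the two consequences of \(A\) being of finite type: first, by the Hilbert basis theorem \(A\), hence also \(A\otimes_k A\), is Noetherian, so that \(A\) — generated over \(A\otimes_k A\) by \(1\) — admits an algebraic resolution \(F_\bullet\to A\) by free \(A\otimes_k A\)-modules of \emph{finite} rank \(n_i\) (possibly of infinite length, as \(A\) need not be smooth); second, \(A\) is countably-dimensional over \(k\), being a quotient of the polynomial ring, so it is complete with a countably generated fine bornology (Lemma \ref{lem:fine_implies_complete}) and, by Lemma \ref{lem:tensor_completed}, \(A\wotimes_k A\cong A\otimes_k A\) with its fine bornology. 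Applying the exact, monoidal fine bornology functor turns \(F_\bullet\to A\) into a strictly exact complex of finite free, hence flat, \(A\wotimes_k A\)-modules.

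I would then compute \(B\wotimes_A^{\mathbb{L}}B\) with this resolution. By the standard identification \(B\wotimes_A^{\mathbb{L}}B\cong (B\wotimes_k B)\wotimes_{A\wotimes_k A}^{\mathbb{L}}A\) and the resolution \(F_\bullet\), the derived tensor product is represented by \(G_\bullet\defeq (B\wotimes_k B)\wotimes_{A\wotimes_k A}F_\bullet\), whose \(i\)-th term is the \emph{finite} power \((B\wotimes_k B)^{n_i}\); in particular no completion is lost when tensoring with \(F_\bullet\). As \(B\) has countably generated bornology, so does \(B\wotimes_k B\) (the products of members of countable bases form a countable cofinal family), hence so does every \(G_i\). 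The augmentation \(G_0=B\wotimes_k B\to B\) is a strict epimorphism whose kernel, by the definition of \(\wotimes_{A\wotimes_k A}\) and the hypothesis \(B\wotimes_A B\cong B\), is exactly the closure of the image of \(G_1\to G_0\); so \(H_0(G_\bullet)\cong B\) on the nose. It then remains only to show that the underlying algebraic complex of \(G_\bullet\) is exact in positive degrees, for then — all \(G_i\) and \(B\) being complete with countably generated bornologies — Corollary \ref{cor:algebraic_exactness_bornological} upgrades this to strict exactness and gives \(B\wotimes_A^{\mathbb{L}}B\cong B\).

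The main obstacle is precisely this last point. Algebraically \(H_\ast(G_\bullet)=\Tor^{A\otimes_k A}_\ast(B\wotimes_k B,A)\), and over the field \(k\) one has \(\Tor^{A\otimes_k A}_\ast(B\otimes_k B,A)\cong\Tor^A_\ast(B,B)\), which vanishes in positive degrees by the assumed algebraic flatness of \(B\) over \(A\). But \(B\wotimes_k B\) is a genuine completion of \(B\otimes_k B\), so its \(\Tor\)-acyclicity against \(A\) over \(A\otimes_k A\) does not follow formally, and establishing it is the real content. The finiteness of the ranks \(n_i\) is what keeps this tractable — tensoring with \(F_\bullet\) involves only finite direct sums and so commutes with completion — and I would reduce, via the presentation \(B\wotimes_k B=\varinjlim_{i,j}B_i\wotimes_k B_j\) of the ambient \(\mathrm{LB}\)-algebra, to a statement about the Banach building blocks, where the closed graph theorem (Theorem \ref{lem:Closed-graph}) together with Lemma \ref{lem:filtered_strict} on filtered colimits of strict maps should close the argument.
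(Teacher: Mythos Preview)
Your approach via a finite free resolution of \(A\) over \(A\otimes_k A\) is genuinely different from the paper's, and the gap you yourself flag in the final paragraph is real and is not closed by what you write. You correctly reduce everything to the vanishing of \(\Tor^{A\otimes_k A}_\ast(B\wotimes_k B,A)\) in positive degrees, but the sketch ``LB-presentation plus closed graph plus filtered colimits'' does not establish this: the Banach pieces \(B_i\wotimes_k B_j\) carry no \(A\otimes_k A\)-module structure compatible with the one on \(B\wotimes_k B\), so there is no per-term statement to which Theorem~\ref{lem:Closed-graph} or Lemma~\ref{lem:filtered_strict} applies. The passage \(B\otimes_k B\to B\wotimes_k B\) is a genuine completion and need not preserve algebraic flatness over \(A\otimes_k A\); you offer no mechanism by which the hypotheses force it to.

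The paper sidesteps exactly this obstacle by choosing a different resolution. Rather than a finite free resolution over \(A\otimes_k A\), it takes the bar complex \(P_\bullet(A)\) with \(P_n=A^{\wotimes_k(n+1)}\). The point of this choice is that every term \(P_n\) carries the \emph{fine} bornology, so Lemma~\ref{lem:tensor_completed} applies when one tensors with \(B\): the paper argues that \(B\wotimes_A P_n(A)\wotimes_A B\cong B\otimes_A Q_n(A)\otimes_A B\) termwise, where \(Q_\bullet\) is the purely algebraic bar complex. Thus the chain complex representing \(B\wotimes_A^{\mathbb L}B\) in \(\mathsf{CBorn}_k\) is identified, as a complex of \(k\)-vector spaces, with the algebraic complex computing \(B\otimes_A^{\mathbb L}B\); algebraic flatness of \(B\) over \(A\) then gives algebraic exactness in positive degrees directly, and Corollary~\ref{cor:algebraic_exactness_bornological} upgrades this to strict exactness. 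In your setup the terms are finite powers of the already-completed object \(B\wotimes_k B\), so Lemma~\ref{lem:tensor_completed} gives you nothing and you are left with precisely the Tor problem you could not resolve. The moral is that the ``right'' resolution here is not the economical Noetherian one but the bar resolution, whose terms stay inside the fine world.
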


\begin{proof}
We know that the bar complex \(P_\bullet(A) \defeq (P_n  = A^{\wotimes_k n + 1}, b_n)\) where 
\begin{multline*}
b_n(x_0 \otimes \cdots \otimes x_n) = \sum_{j=0}^{n-1} (-1)^j x_0 \otimes \cdots \otimes x_j \cdot x_{j+1} \otimes \cdots \otimes x_n \\
+ (-1)^n x_n x_0 \otimes x_1 \otimes \cdots \otimes x_{n-1},
\end{multline*} \(x_i \in A\), yields a strict resolution of \(A\) by projective \(A \wotimes_k A\)-modules. Notice that $B\wotimes^{\mathbb{L}}_{A} B$ is represented by $B\wotimes_{A}P_\bullet(A) \wotimes_{A}B$. Note that since \(A\) and the constituent terms of the bar resolution have the fine bornology, and since the fine bornology functor is fully exact, it makes no difference whether we use the (completed) bornological tensor product or the algebraic tensor product over \(k\). In order to make things clear, let $Q_\bullet(A)$ be the same complex as $P_\bullet(A)$ where $Q$ uses only algebraic tensor products.  Now since \(A \to B\) is flat for the algebraic tensor product over \(A\), we have that $B \otimes^{\mathbb{L}}_{A} B$ is computed by \(B \otimes_A Q_\bullet (A) \otimes_A B \). This is exact as a chain complex of \(k\)-vector spaces in all degrees except for $0$ where it has cohomology $B \otimes_{A} B$. Now since \(A\) and \(P_\bullet(A)\) have the fine bornology, we have 
\[B \wotimes_A P_n (A) \wotimes_A B \cong B \otimes_A Q_n(A) \otimes_A B \] for all \(n \geq 1\) by Lemma \ref{lem:tensor_completed}. This is a chain complex of complete bornological \(k\)-vector spaces that is still algebraically exact. Now since the bornology of \(B\) is countably generated, each term in the complex above has countably generated bornology. Corollary \ref{cor:algebraic_exactness_bornological} now implies that this complex is also (strictly) exact for the quasi-abelian structure on \(\mathsf{CBorn}_k\).  Therefore $B\wotimes_{A}P_\bullet(A) \wotimes_{A}B$ has cohomology only in degree zero and so $B\wotimes^{\mathbb{L}}_{A} B \cong B\wotimes_{A} B \cong B$.
\qedhere
\end{proof}

\begin{example}[Overconvergent non-archimedean geometry]\label{ex:overconvergent_isocohomological}
The hypotheses of Proposition \ref{prop:fine_implies_bornological} apply to the rationalized dagger completion (see \cite{Meyer-Mukherjee:Bornological_tf} and Remark \ref{rem:dagger_equivalent}) \(\underline{R}^\dagger \defeq \comb{\ling{R}} \wotimes_{\Z_p} \Q_p\) of a torsion-free, finite-type commutative \(\Z_p\)-algebra \(R\) with the fine bornology. More concretely, by \cite{Cortinas-Cuntz-Meyer-Tamme:Nonarchimedean}*{Lemma 4.14}, the map \(R \to R^\dagger\) is algebraically flat. Furthermore, a subset in the dagger completion \(R^\dagger\) is bounded if and only if it is contained in the \(\Z_p\)-submodule generated by \[S_{\ceil{C}} = \setgiven{\sum_{j=0}^\infty p^j w_j}{w_j \in M^{\kappa_j}, M \subseteq R \text{ finitely generated}, \kappa_j \leq \ceil{C}(j+1), C> 0}.\] These submodules generate the bornology on \(R^\dagger\). Therefore, the bornology on \(R^\dagger\) is countably generated. This does not change if we tensor with \(\Q_p\), so the bornology on \(\underline{R}^\dagger\) is still countably generated.  
\end{example}

\begin{example}[Complex analytic geometry]
Consider an open polydisk inside the complex analytification of the scheme associated to a finitely generated $\mathbb{C}$-algebra $A$. By Corollary A1.2.2 of \cite{Neeman:AlgAn}, the ring of analytic functions on this open polydisk is algebraically flat over $A$. Therefore, as a filtered colimit in \(\mathsf{CBorn}_\C\)  of functions on larger open polydisks, the overconvergent functions $B$ on the closure of a open polydisk are also flat over $A$. The overconvergent functions are also countably generated. If $A=\C[x_1,\dots,x_n]/(f_1,\dots,f_k)$ then for the appropriate closed polydisk, $B$ is given by $\C\{x_1,\dots,x_n\}^{\dagger}/(f_1,\dots,f_k)$, similarly for other multi-radii. The hypotheses of Proposition \ref{prop:fine_implies_bornological} apply to these cases. 
\end{example}


\section{Localizations as Homotopy Epimorphisms}\label{sec:derived_localisation}

For each of the types of localization near $(0, \dots, 0)$ in affine $n$-space discussed in Theorem \ref{thm:polynomial_dagger_isocohomological2} and Remark \ref{rem:hybrid} we can introduce (motivated by terminology from rigid analytic geometry) Weierstrass and Lauent localizations representing functions near or far from the zero locus of functions in an arbitrary affine analytic scheme or a derived affine analytic scheme. In a similar way we also introduce rational localizations.  All of the derived localizations we introduce geometrically represent the inclusion of an open set in the relevant Grothendieck model topology: they represent homotopy monomorphisms of derived analytic schemes.

\begin{definition}\label{def:W} Fix a Banach ring $R$ and let $A$ be a unital strictly commutative simplicial ring or differential graded ring in  \(\mathsf{Ind}(\mathsf{Ban}_R)\). Fix an analytification $C$ with $R[x] \subset C \subset R[[x]]$. Write $A_C = A \wotimes^{\mathbb{L}}_{R}C$.  If $a$ is a degree zero element of $A$ and $C$ is a $1$-dimentional analytic ring over $R$ then the derived Weierstrass localization of $A$ determined by $C$ and $a$ is the map $A \to A_C \sslash (x-a)$. For elements $a_1, \dots, a_n$ we can either iterate this construction for different $C$s or simply use an analytification $R[x_1, \dots, x_n] \to C \to R[[x_1, \dots, x_n]]$ and the ideal $(x_1-a_1, \dots, x_n-a_n)$ resulting in $A \to A_C \sslash (x_1-a_1, \dots, x_n-a_n)$. These give the same result. The definitions for $R$ and $A$ non-archimedean work similarly in  \(\mathsf{Ind}(\mathsf{Ban}^{na}_R)\) with its monoidal structure.
\end{definition}
Geometrically one should imagine that $a$ gives a map from the spectrum of $A$ to $\mathbb{A}^{1}_{R}$ and we are pulling back the spectrum of $C$. Indeed 
\[A \wotimes^{\mathbb{L}}_{R[x]} C \cong (A \wotimes^{\mathbb{L}}_{R} C) \wotimes^{\mathbb{L}}_{\mathbb{Z}[y]} \mathbb{Z}=A_C \sslash (x-a)
\]
where $x\mapsto a$ and $y \mapsto x-a$, $y \mapsto 0$.
\begin{lemma}\label{lem:WS} Any derived Weierstrass localization is a homotopy epimorphism.
\end{lemma}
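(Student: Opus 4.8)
The plan is to recognise a derived Weierstrass localization as a derived base change of an analytification map, and then to quote the inheritance results for homotopy epimorphisms already proved in Sections~\ref{sec:homotopy_epimorphisms} and~\ref{sec:Analytification_hepi}. Fix $A$, the analytification $R[x] \subset C \subset R[[x]]$, and a degree zero element $a \in A$. The element $a$ classifies a morphism $R[x] \to A$, $x \mapsto a$ (a polynomial ring being free in $\mathsf{Comm}$, such a morphism is determined by this assignment), and the identity recorded immediately after Definition~\ref{def:W},
\[A \wotimes^{\mathbb{L}}_{R[x]} C \cong (A \wotimes^{\mathbb{L}}_{R} C) \wotimes^{\mathbb{L}}_{\mathbb{Z}[y]} \mathbb{Z} = A_C \sslash (x-a),\]
identifies the derived Weierstrass localization $A \to A_C \sslash (x-a)$ with the derived pushout $A \to A \wotimes^{\mathbb{L}}_{R[x]} C$ of the analytification $R[x] \to C$ along $R[x] \to A$. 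So everything reduces to showing that this derived pushout is a homotopy epimorphism.

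For that, first I would recall that $R[x] \to C$ is a homotopy epimorphism of commutative monoids in $\mathsf{Ind}(\mathsf{Ban}_R)$: this is exactly the one-variable case of Theorem~\ref{thm:polynomial_dagger_isocohomological2}, with Remark~\ref{rem:hybrid} covering the hybrid analytifications (and the non-archimedean variants living in $\mathsf{Ind}(\mathsf{Ban}^{na}_R)$). I then upgrade this to a homotopy epimorphism in the relevant monoidal model category --- $\mathsf{s}(\mathsf{Ind}(\mathsf{Ban}_R))$ in general, or $\mathsf{Ch}^{\leq 0}(\mathsf{Ind}(\mathsf{Ban}_R))$ when $\Q \subseteq R$ --- using the observation, stated just before Proposition~\ref{prop:special_quotients}, that a homotopy epimorphism of ordinary commutative algebras is again one when the algebras are regarded as $0$-connective objects. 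Finally, Proposition~\ref{prop:derived_base_change}, applied to the homotopy epimorphism $R[x] \to C$ and the arbitrary morphism $R[x] \to A$, yields that $A \to A \wotimes^{\mathbb{L}}_{R[x]} C \cong A_C \sslash (x-a)$ is a homotopy epimorphism, which is what we want.

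The several-variables case $A \to A_C \sslash (x_1 - a_1, \dots, x_n - a_n)$ is handled identically, replacing $R[x] \to C$ by the $n$-variable analytification $R[x_1, \dots, x_n] \to C \to R[[x_1, \dots, x_n]]$ (again a homotopy epimorphism by Theorem~\ref{thm:polynomial_dagger_isocohomological2}) and base changing along $R[x_1, \dots, x_n] \to A$, $x_i \mapsto a_i$; equivalently one iterates the one-variable statement and appeals to Proposition~\ref{prop:two_out_of_three}, which also shows the two constructions agree. The non-archimedean case is verbatim the same, with $\wotimes_R$ and $\mathsf{Ind}(\mathsf{Ban}_R)$ replaced by $\wotimes^{na}_R$ and $\mathsf{Ind}(\mathsf{Ban}^{na}_R)$. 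I do not expect a serious obstacle here: the only point requiring any care is the bookkeeping that identifies $A_C \sslash (x-a)$ with the derived pushout $A \wotimes^{\mathbb{L}}_{R[x]} C$ and checks that the structure maps out of $A$ correspond, but this is precisely the computation the authors already display after Definition~\ref{def:W}, so essentially nothing remains to be done.
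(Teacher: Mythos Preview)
Your proposal is correct and follows essentially the same approach as the paper: identify the Weierstrass localization $A \to A_C \sslash (x-a)$ with the derived base change $A \to A \wotimes^{\mathbb{L}}_{R[x]} C$ of the homotopy epimorphism $R[x] \to C$ along $x \mapsto a$, and then invoke Proposition~\ref{prop:derived_base_change}. The paper's proof is a one-line version of exactly this argument.
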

\begin{proof}The morphism $A \to A \wotimes^{\mathbb{L}}_{R[x]} C$ is a derived base change of the homotopy epimorphism $R[x] \to C$ along the morphism $R[x]\to A$ determined by $x\mapsto a$. For $R$ and $A$ non-archimedean this works similarly in \(\mathsf{Ind}(\mathsf{Ban}^{na}_R)\) with its monoidal structure.
\end{proof}
\begin{example}If $A=R=\mathbb{Z}$ and $C= \mathbb{Z}\{px\}^{\dagger}$ and $a=p$ then we get the homotopy epimorphism $\mathbb{Z} \mapsto \mathbb{Z}_{p}^{\dagger}$ from Example \ref{ex:pad}. If $R=k$ is a non-archimedean field and $A$ is an affinoid algebra over $k$ and we work in the non-archimedean context and $C=k\langle \frac{x}{r} \rangle$ then since multiplication by $x-a$ is strict so we get the map $A \to A\langle\frac{x}{r}\rangle/(x-a)=A\langle \frac{a}{r}\rangle$ a usual Weierstrass localization thereby reproving results from \cite{Ben-Bassat-Kremnizer:Nonarchimedean_analytic} that this is a homotopy epimorphism. 
Some interesting examples in the non-archimedean context were given in \cite{BaK}.
\end{example}
An important example of derived Weierstrass localization is the derived adic-compltion of an ideal. 
\begin{definition}\label{def:DerAdCom}
Fix a Banach ring $R$ and let $A$ be a unital strictly commutative simplicial ring or differential graded ring in  \(\mathsf{Ind}(\mathsf{Ban}_R)\). Given an element $a\in A$ (a map $R$ to $A$) the analytic derived adic-completion $A_{\hat{a}}$ is defined by the derived Weierstrass localization $A\wotimes^{\mathbb{L}}_{R[y]}R[[y]]$ where the map is given by $y\mapsto a$, and similarly for a finitely generated ideal $J$ we define the derived $J$-adic completion $A_{\hat{J}}$ by derived Weirestrass localization.
\end{definition}
\begin{lemma}\label{lem:hyperbola} Given $C$ a $1$-dimensional analytic ring with
 \[R[y] \to C \to R[[y]]\] 
The natural map $R[x]\to C[x] \sslash (1-xy)= R[x]\wotimes_{R}C \sslash (1-xy)$ is a homotopy epimorphism.
\end{lemma}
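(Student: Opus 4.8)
The plan is to factor the natural map through the Laurent polynomial ring $R[x,x^{-1}]$ and to deduce the statement formally from the inheritance properties of Section \ref{sec:homotopy_epimorphisms}, using the strictness machinery of Section \ref{sec:An} at a single point.

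First I would produce one half of the factorization by base change. By Theorem \ref{thm:polynomial_dagger_isocohomological2} the map $R[y]\to C$ is a homotopy epimorphism. Since $R[x,y]=R[x]\wotimes_R R[y]$ is flat over $R[y]$ ($R[x]$ being flat over $R$), Proposition \ref{prop:derived_base_change} applied to the homotopy epimorphism $R[y]\to C$ and the morphism $R[y]\to R[x,y]$ shows that $R[x,y]\to R[x,y]\wotimes^{\mathbb{L}}_{R[y]}C\cong R[x]\wotimes_R C=C[x]$ is a homotopy epimorphism. As $1-xy\in R[x,y]$ maps to $1-xy\in C[x]$, Proposition \ref{prop:special_quotients} then gives that
\[R[x,y]\sslash(1-xy)\longrightarrow C[x]\sslash(1-xy)\]
is a homotopy epimorphism.

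The second half is the identification $R[x,y]\sslash(1-xy)\cong R[x,x^{-1}]$ — the Laurent polynomial ring with its fine ind-structure — together with the fact that the localization $R[x]\to R[x,x^{-1}]$ is a homotopy epimorphism. For the identification I would check that multiplication by $1-xy$ on $R[x,y]$ with its fine ind-structure is a strict monomorphism, which is the same mechanism as in Lemma \ref{lem:poly_diag_strict}: partitioning the monomial basis into the diagonal chains $\{x^{i+s}y^{j+s}\}_{s\ge 0}$, on which $1-xy$ acts by successive differences, one bounds division by $1-xy$ on each $\mathcal{F}_m(R[x,y])$ by a constant depending only on $m$ — a chain meeting $\mathcal{F}_m(R[x,y])$ has length $O(m)$ — which is exactly what strictness in $\mathsf{Ind}(\mathsf{Ban}_R)$ requires; it does \emph{not} require a bound uniform in $m$ (cf.\ Remark \ref{rem:wondered}). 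Hence $R[x,y]\sslash(1-xy)=R[x,y]/(1-xy)$, and $y\mapsto x^{-1}$ identifies this with $R[x,x^{-1}]$. For the localization, $R[x,x^{-1}]\cong\colim\bigl(R[x]\xrightarrow{\cdot x}R[x]\xrightarrow{\cdot x}\cdots\bigr)$ is a filtered colimit of free $R[x]$-modules along strict monomorphisms, hence flat over $R[x]$ by Lemma \ref{lem:filtered_strict}, so $R[x,x^{-1}]\wotimes^{\mathbb{L}}_{R[x]}R[x,x^{-1}]\cong R[x,x^{-1}]\wotimes_{R[x]}R[x,x^{-1}]\cong R[x,x^{-1}]$.

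Finally I would assemble. By functoriality of $\sslash$, and because $R[x,y]\to C[x]$ restricts to the canonical inclusion $R[x]\hookrightarrow C[x]$, the natural map of the statement coincides with the composite $R[x]\to R[x,x^{-1}]=R[x,y]\sslash(1-xy)\to C[x]\sslash(1-xy)$ of two homotopy epimorphisms, so Proposition \ref{prop:two_out_of_three} finishes the proof. When $R$ is non-archimedean the same argument works verbatim in $\mathsf{Ind}(\mathsf{Ban}^{na}_R)$ with $\wotimes_R$ replaced by $\wotimes^{na}_R$. The only non-formal step is the strictness of multiplication by $1-xy$, which I expect to be the main obstacle, although it is handled exactly as in Lemma \ref{lem:poly_diag_strict}.
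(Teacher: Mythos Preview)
Your proof is correct and takes a genuinely different route from the paper's. The paper argues directly: it writes $B=C[x]\sslash(1-xy)$ as the two-term complex $[C[x]\xrightarrow{1-xy}C[x]]$, computes $B\wotimes^{\mathbb{L}}_{R[x]}B$ as the total complex of the evident bicomplex (using flatness of $C[x]$ over $R[x]$), and then exhibits a $3\times 3$ diagram whose columns are the diagonal short exact sequence for $C$ tensored with $R[x]$, and whose top row---the cone of the multiplication $B\wotimes^{\mathbb{L}}_{R[x]}B\to B$---is visibly split exact. The only analytic input is the strict exactness of $0\to C\wotimes_R C\xrightarrow{z-w}C\wotimes_R C\to C\to 0$.

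Your argument instead factors the map as $R[x]\to R[x,x^{-1}]\to C[x]\sslash(1-xy)$ and assembles the result from the inheritance properties of Section \ref{sec:homotopy_epimorphisms}, invoking the diagonal strictness of $C$ only indirectly via Theorem \ref{thm:polynomial_dagger_isocohomological2}. This is more modular and makes the geometry transparent (the Laurent localization factors through the Zariski localization at $x$), at the cost of the extra check that $1-xy$ acts strictly on $R[x,y]$ with the fine ind-structure---which, as you note, is the same mechanism as Lemma \ref{lem:poly_diag_strict}. Two minor points: your citation of Lemma \ref{lem:filtered_strict} for flatness of $R[x,x^{-1}]$ is slightly elliptical (one uses it together with the fact that filtered colimits preserve monomorphisms in $\mathsf{Ind}(\mathsf{Ban}_R)$); and your approach does not obviously adapt to the rational localization Lemma \ref{lem:RatLoc}, where there is no algebraic intermediate ring to factor through, whereas the paper's diagram template is reused there essentially verbatim.
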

\begin{proof}
Consider the commutative diagram of horizontal and vertical short exact sequences
\[
\xymatrix @+2.8pc {   C[x]\wotimes_{R[x]} C[x] \ar[r]^{(1,-x)^{T}}  \ar@{=}[d] & C[x]\wotimes_{R[x]} C[x]^{\oplus 2} \ar[r]^{(x,1)} \ar[d]^{\begin{pmatrix} -1 & -w \\ 1 & z \end{pmatrix}} & C[x]\wotimes_{R[x]} C[x]  \ar[d]^{z-w}   \\ C[x]\wotimes_{R[x]} C[x]  \ar[r]^{(wx-1,1-zx)^T}  & C[x]\wotimes_{R[x]} C[x]^{\oplus 2} \ar[d]^{(\Delta^*, \Delta^*)}  \ar[r]^{(1-zx,1-wx)} & C[x]\wotimes_{R[x]} C[x]   \ar[d]^{\Delta^*}  \\  & C[x]  \ar[r]^{1-yx}   &  C[x]   
}
\]
in which we have used the notation of matrices multiplying column vectors where the vector is placed on the right of the matrix and $\Delta^*(z)=y=\Delta^*(w)$ and we have written the left copy of $C$ in the variable $z$ in place of $y$ and in written the right copy of $C$ in the variable $w$ in place of $y$.  Keeping in mind the isomorphism $ C[x]\wotimes_{R[x]} C[x] \cong  C\wotimes_{R} C \wotimes_{R} R[x]$, the rightmost column is strict exact by assumption on $C$ and the flatness of $R[x]$ over $R$. 
Because $\det \begin{pmatrix} -1 & -w \\ 1 & z \end{pmatrix}=w-z$, is strict, the middle column is strict exact as well. Since the left column is also strict exact, we can conclude that the top row represents the cone of the natural morphism \[(C[x] \sslash (1-xy))\wotimes^{\mathbb{L}}_{R[x]}(C[x] \sslash (1-xy))\to C[x] \sslash (1-xy).\] However, on the top row, the right map is split on the right and the left map is split on the left and so the top row is strictly exact. Hence $R[x]\to C[x] \sslash (1-xy)$ is a homotopy epimorphism. For $R$ and $A$ non-archimedean this works similarly in \(\mathsf{Ind}(\mathsf{Ban}^{na}_R)\) with its monoidal structure.
\end{proof}

\begin{definition}\label{defn:Laurent}
Fix a Banach ring $R$ and let $A$ be a unital strictly commutative simplicial ring or differential graded ring in  \(\mathsf{Ind}(\mathsf{Ban}_R)\). If $a$ is a degree zero element of $A$ and $C$ is a $1$-dimensional analytic ring over $R$ and $A_C = A \wotimes^{\mathbb{L}}_{R}C$ then the derived annular localization of $A$ determined by $C$ and $a$ is the map \[A \to A \wotimes^{\mathbb{L}}_{R[x]}( C[x] \sslash (1-xy))= (A \wotimes^{\mathbb{L}}_{R} C) \sslash (1-ay).
\]
given by $x \mapsto a$. For elements $a_1, \dots, a_n$ we can either iterate this construction for different $C$s or simply use an analytification $R[y_1, \dots, y_n] \to C \to R[[y_1, \dots, y_n]]$ and the maps $x_i \mapsto a_i$. These give the same result. Define an derived annular localization as an iteration of one dimensional derived annular localizations where we can chose a different $C$ at each step and an element $a$ from the previous localization. Define a derived Lauent localization to be any finite iteration of derived Weierstrass and derived annular localizations (where at each step we can chose a new $C$ and a new element $a$ from the previous step).  For $R$ and $A$ non-archimedean this works similarly in \(\mathsf{Ind}(\mathsf{Ban}^{na}_R)\) with its monoidal structure.
\end{definition}
\begin{lemma}\label{lem:LauLoc} Any derived Laurent localization is a homotopy epimorphism.
\end{lemma}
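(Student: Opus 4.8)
The plan is to reduce everything to two facts already in hand: a derived base change of a homotopy epimorphism is again a homotopy epimorphism (Proposition \ref{prop:derived_base_change}), and homotopy epimorphisms are closed under composition (an immediate consequence of Proposition \ref{prop:two_out_of_three}: if $f$ and $g$ are homotopy epimorphisms, then so is $g \circ f$). Granting these, since a derived Laurent localization is by definition a finite iteration of derived Weierstrass and derived annular localizations, it suffices to check that each single Weierstrass step and each single annular step is a homotopy epimorphism.

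First I would dispose of the Weierstrass steps, which are homotopy epimorphisms by Lemma \ref{lem:WS}. Next I would treat a single one-dimensional derived annular localization. By Definition \ref{defn:Laurent} this is the map $A \to A \wotimes^{\mathbb{L}}_{R[x]}(C[x] \sslash (1-xy)) = (A \wotimes^{\mathbb{L}}_{R} C)\sslash(1-ay)$ classified by $x \mapsto a$; that is, it is precisely the derived pushout of the map $R[x] \to C[x] \sslash (1-xy)$ along the morphism $R[x] \to A$ sending $x$ to $a$. By Lemma \ref{lem:hyperbola} the map $R[x] \to C[x] \sslash (1-xy)$ is a homotopy epimorphism, so Proposition \ref{prop:derived_base_change} shows the annular localization is one as well. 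The same argument applies verbatim in $\mathsf{Ind}(\mathsf{Ban}^{na}_R)$ with its monoidal structure, handling the non-archimedean case.

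An iterated (multivariable) derived annular localization is, by construction, a finite composite of such one-dimensional steps: at each stage the chosen element $a$ is a degree-zero element of the algebra produced by the previous stage and a new analytification $C$ may be selected, so each stage fits the base-change picture above with that algebra in the role of $A$. Hence an iterated derived annular localization is a composite of homotopy epimorphisms, and therefore a homotopy epimorphism. Finally, a general derived Laurent localization is a finite composite of derived Weierstrass and derived annular localizations; by the previous steps each factor is a homotopy epimorphism, and composites of homotopy epimorphisms are homotopy epimorphisms, which completes the argument.

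I do not anticipate a genuine obstacle here; the only point needing care is the bookkeeping that ``iterating'' literally means ``composing'', so that the element selected at a given stage is a degree-zero element of the algebra built so far and the base-change statement applies stagewise. Once that is made explicit, Lemma \ref{lem:WS}, Lemma \ref{lem:hyperbola}, Proposition \ref{prop:derived_base_change} and Proposition \ref{prop:two_out_of_three} assemble mechanically.
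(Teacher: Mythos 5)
Your argument is correct and follows essentially the same route as the paper: it reduces the annular step to a derived base change of Lemma \ref{lem:hyperbola} via Proposition \ref{prop:derived_base_change}, exactly as the paper does, and the only difference is that you spell out the composition bookkeeping (Weierstrass steps via Lemma \ref{lem:WS}, closure of homotopy epimorphisms under composition via Proposition \ref{prop:two_out_of_three}) that the paper leaves implicit in the phrase ``finishes the proof.''
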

\begin{proof}The morphism $A \to A \wotimes^{\mathbb{L}}_{R[x]} (C[x] \sslash (1-xy))$ is a derived base change of the homotopy epimorphism $R[x] \to C[x] \sslash (1-xy)$ from Lemma \ref{lem:hyperbola} along the morphism $R[x]\to A$ determined by $x\mapsto a$.  An application of Proposition \ref{prop:derived_base_change} finishes the proof. For $R$ and $A$ non-archimedean this works similarly in \(\mathsf{Ind}(\mathsf{Ban}^{na}_R)\) with its monoidal structure.

\end{proof}
\begin{example}If we work in the non-archimedean context and $R=k$ is a non-archimedean field and $A$ is an affinoid algebra and $a\in A$ and we take $C=k\langle ry \rangle$ then since $C[x]$ is flat over $R[x]$ the annular localization is the complex in degrees $-1$ and $0$ given by
\[[A\wotimes^{na}_{k[x]}k\langle ry \rangle[x] \stackrel{1-xy}\longrightarrow A\wotimes^{na}_{k[x]}k\langle ry\rangle[x] ]
\]
or
\[[A\wotimes^{na}_{k}k\langle ry \rangle \stackrel{1-ay}\longrightarrow A\wotimes^{na}_{k}k\langle ry \rangle]
\]
which gives the usual formula $A \to A\langle ry \rangle/(1-ay)=A\langle \frac{r}{a}\rangle$ discussed in \cite{Ben-Bassat-Kremnizer:Nonarchimedean_analytic}.
\end{example}


\begin{definition}\label{defn:rational} Fix a Banach ring $R$ and let $A$ be a unital strictly commutative simplicial ring or differential graded ring in  \(\mathsf{Ind}(\mathsf{Ban}_R)\). For $i$ from $1$ to $n$, let $C_i$ be a $1$-dimensional analytic ring written in the variable $y_i$. Let $C_{(n)}=C_1\wotimes_R \cdots \wotimes_R C_n$. Let $g, f_1, \dots, f_n$ be degree zero elements of $A$ such that they generate the unit ideal in $H^{0}(A)$ or $\pi_0(A)$. The derived rational localization of $A$ with respect to this data is the map
\[A \longrightarrow A \wotimes_{R}C_{(n)} \sslash (gy_1-f_1, \dots, gy_n-f_n)
\]
where we have written $A \wotimes^{\mathbb{L}}_{R}C_{(n)}$ as $A \wotimes_{R}C_{(n)}$ as $C_{(n)}$ is flat over $R$ and we have chosen a specific model for $A$. For $R$ and $A$ non-archimedean this works similarly in \(\mathsf{Ind}(\mathsf{Ban}^{na}_R)\) with its monoidal structure.
\end{definition}
\begin{lemma} \label{lem:RatLoc} Any derived rational localization is a homotopy epimorphism.
\end{lemma}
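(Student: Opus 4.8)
The plan is to use the unit-ideal hypothesis to force $g$ to become invertible in the localization, thereby factoring the derived rational localization through the algebraic inversion of $g$, and then to recognize what remains as a derived Weierstrass localization. Write $C = C_1 \wotimes_R \cdots \wotimes_R C_n$ and $B = A \wotimes_R C \sslash (gy_1 - f_1, \dots, gy_n - f_n)$ for the derived rational localization, and set $A[g^{-1}] \defeq A \wotimes^{\mathbb{L}}_{R[t]} R[t,t^{-1}]$, the derived base change along $R[t] \to A$, $t \mapsto g$. Granting for the moment that $A \to A[g^{-1}]$ is a homotopy epimorphism and that $g$ is invertible in $\pi_0(B)$ (resp.\ $H^0(B)$), the argument concludes quickly: the latter means $A \to B$ extends to an $A[g^{-1}]$-algebra structure on $B$ by the universal property of the localization, so by Lemma \ref{lem:base_change_1} applied to the homotopy epimorphism $A \to A[g^{-1}]$ we get $B \cong A[g^{-1}] \wotimes^{\mathbb{L}}_A B$; unwinding, using that $C$ is flat over $R$ and that $\sslash$ commutes with $\wotimes^{\mathbb{L}}$, this is
\[
B \;\cong\; A[g^{-1}] \wotimes_R C \sslash (gy_1 - f_1, \dots, gy_n - f_n).
\]
Since $g$ is a unit in $A[g^{-1}] \wotimes_R C$, the sequences $(gy_i - f_i)_{i}$ and $(y_i - g^{-1}f_i)_{i}$ differ by that unit, so their Koszul complexes are isomorphic and $B \cong A[g^{-1}] \wotimes_R C \sslash (y_1 - g^{-1}f_1, \dots, y_n - g^{-1}f_n)$, which is precisely the derived (multivariable) Weierstrass localization of $A[g^{-1}]$ determined by the analytification $R[y_1,\dots,y_n] \to C \to R[[y_1,\dots,y_n]]$ (a homotopy epimorphism by Theorem \ref{thm:polynomial_dagger_isocohomological2} and Remark \ref{rem:hybrid}) and the elements $g^{-1}f_i \in A[g^{-1}]$. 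By Lemma \ref{lem:WS} the map $A[g^{-1}] \to B$ is a homotopy epimorphism, and composing with $A \to A[g^{-1}]$ and using Proposition \ref{prop:two_out_of_three} finishes the proof; the non-archimedean case is identical with $\wotimes_R$ replaced by $\wotimes^{na}_R$.

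It remains to supply the two granted facts. For the first, I would check that $R[t] \to R[t,t^{-1}]$ is itself a homotopy epimorphism and then invoke Proposition \ref{prop:derived_base_change}: writing $R[t,t^{-1}] \cong \colim_m t^{-m} R[t]$ as a filtered colimit of free rank-one $R[t]$-modules along the strict monomorphisms given by multiplication by $t$ (which on the fine ind-Banach structures are split inclusions of finite free $R$-modules), $R[t,t^{-1}]$ is flat over $R[t]$ by strict exactness of filtered colimits in $\mathsf{Ind}(\mathsf{Ban}_R)$, and $R[t,t^{-1}] \wotimes_{R[t]} R[t,t^{-1}] \cong \colim_m t^{-m} R[t,t^{-1}] \cong R[t,t^{-1}]$, which agrees with the derived tensor square by flatness. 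For the second, since $C$ is flat over $R$, $\pi_0(A \wotimes_R C) \cong \pi_0(A) \wotimes_R C$ and hence $\pi_0(B) \cong (\pi_0(A) \wotimes_R C)/(gy_i - f_i)$; choosing $c_0,\dots,c_n \in \pi_0(A)$ with $c_0 g + \sum_{i\ge 1} c_i f_i = 1$ and substituting $f_i = gy_i$ gives $g \cdot (c_0 + \sum_{i\ge 1} c_i y_i) = 1$ in $\pi_0(B)$, so $g$ is invertible there.

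The main obstacle — and really the whole point — is the observation that the unit-ideal hypothesis is exactly the condition that makes $g$ invertible in the localization, which is what permits the reduction to an algebraic inversion followed by a Weierstrass localization; without it the statement is false. The only technically new input beyond the results already available is the (routine) fact that the Zariski localization $A \to A[g^{-1}]$ is a homotopy epimorphism in the ind-Banach setting, handled by the colimit computation above. Should that route prove awkward, one could instead mimic the explicit resolution-of-the-diagonal diagram of Lemma \ref{lem:hyperbola}, enlarged to an $(n+1)$-fold Koszul complex incorporating the relation $c_0 g + \sum_i c_i f_i = 1$, but the factorization through $A[g^{-1}]$ is cleaner.
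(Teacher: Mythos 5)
Your proof takes a genuinely different route from the paper's. The paper reduces to $n=1$ and then runs an explicit resolution-of-the-diagonal diagram chase in the spirit of Lemma~\ref{lem:hyperbola}: it builds a $3\times3$ diagram whose bottom row represents $B$, whose middle row represents $B\wotimes_A^{\mathbb{L}}B$, and it uses the relation $af+bg=1$ only to \emph{split} the top row (via $(a,b)$ and $(-b,a)^T$), thereby showing the product map is an equivalence. You instead observe that the unit-ideal hypothesis makes $g$ invertible in $\pi_0(B)$, factor the rational localization as a Zariski localization $A\to A[g^{-1}]$ followed by a multivariable Weierstrass localization $A[g^{-1}]\to B$, and invoke Lemma~\ref{lem:WS} plus Proposition~\ref{prop:two_out_of_three}. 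This is a clean conceptual reduction and your rescaling of the Koszul generators $(gy_i-f_i)\rightsquigarrow(y_i-g^{-1}f_i)$ by the unit $g$ correctly identifies the second step with a Weierstrass localization. What your route buys is the disentangling of the two roles of the data: the algebraic inversion of $g$, and the analytic truncation via $C$. What the paper's route buys is self-containment: it never leaves the class of analytic rings already treated, and it needs no new input about Zariski localization in the bornological setting.

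There is one mild gap in your argument that the paper's approach avoids. You need a morphism $A[g^{-1}]\to B$ of commutative monoids in $\mathsf{sInd}(\mathsf{Ban}_R)$ over $A$, and you appeal to ``the universal property of the localization'' and $g$ being invertible in $\pi_0(B)$. In the simplicial ind-Banach model category this is not quite automatic: one either has to prove that $R[t]\to R[t,t^{-1}]$ (with the fine ind-structure) satisfies the expected derived universal property, or to argue directly that multiplication by $g$ on (a cofibrant replacement of) $B$ is a weak equivalence in the projective model structure when $g$ is invertible in $\pi_0(B)$, so that $A[g^{-1}]\wotimes_A^{\mathbb{L}}B=\colim(B\xrightarrow{g}B\xrightarrow{g}\cdots)\simeq B$. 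Your argument that $R[t]\to R[t,t^{-1}]$ is itself a homotopy epimorphism (strict filtered colimit of split monos of finite free $R[t]$-modules, hence flat, plus idempotence of the tensor square) is fine, and the $\pi_0(B)$-invertibility computation is correct; the missing piece is the lifting/factorization step. In the paper's diagram argument this issue never arises because everything is built from a single Koszul model and the unit-ideal relation is used only to construct an explicit contracting homotopy.
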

\begin{proof}
First notice that $f_n, g$ generate the unit ideal for the connected components of \[A \wotimes_{R}C_{(n-1)} \sslash (gy_1-f_1, \dots, gy_{n-1}-f_{n-1}).\] 
This implies that it is enough to show that the maps 
\[A \longrightarrow B=A \wotimes_{R}C \sslash (gy-f)
\]
are homotopy epimorphisms when $f,g$ generate the unit ideal in  $H^{0}(A)$ or $\pi_0(A)$ and $C$ is a one-dimesional analytic ring over $R$ written in the variable $y$. Choose elements $a,b\in A$ such that $af+bg=1$ in $H^{0}(A)$ or $\pi_0(A)$. In the following diagram, we suppress the symbol $\mathbb{L}$ since $C$ is flat over $R$ by assumption and we have chosen a specific model representing $A$. Consider the following diagram in the derived category. 
\[
\xymatrix @+2.8pc { 
A\wotimes_{R}C\wotimes_{R}C \ar[r]^{(f,g)^{T}}  \ar@{=}[d] & A\wotimes_{R}C\wotimes_{R}C^{\oplus 2} \ar[r]^{(-g,f)} \ar[d]^{\begin{pmatrix} -1 & -w \\ 1 & z \end{pmatrix}} & A\wotimes_{R} C\wotimes_{R}C \ar[d]^{z-w} \\ 
A\wotimes_{R}C\wotimes_{R}C  \ar[r]^{(f-gw,gz-f)^T}  & A\wotimes_{R}C\wotimes_{R}C^{\oplus 2} \ar[d]^{(\Delta^*, \Delta^*)}  \ar[r]^{(gz-f,gw-f)} & A\wotimes_{R} C\wotimes_{R}C   \ar[d]^{\Delta^*}  \\ 
  & A\wotimes_{R}C\wotimes_{R}C  \ar[r]^{gy-f}  &  A\wotimes_{R}C  & 
}
\]
We have used the notation of matricies multiplying column vectors where the vector is placed on the right of the matrix and $\Delta^*(z)=y=\Delta^*(w)$ and we have written the left copy of $C$ in the variable $z$ in place of $y$ and in written the right copy of $C$ in the variable $w$ in place of $y$. The bottom row represents $B$ and the middle row represents $B\wotimes^{\mathbb{L}}_{A}B$ and the map between them is simply the natural product map. The right column is an exact triangle by assumption on $C$. Because $\det \begin{pmatrix} -1 & -w \\ 1 & z \end{pmatrix}=w-z$, is strict, the middle column is an exact triangle as well and the left column is obviously an exact triangle. The top row is split by the maps 
\[A\wotimes_{R}C\wotimes_{R}C \stackrel{(a,b)}\longleftarrow A\wotimes_{R}C\wotimes_{R}C^{\oplus 2}  \stackrel{(-b,a)^{T}}\longleftarrow A\wotimes_{R}C\wotimes_{R}C
\]
and so it is an exact triangle and hence we are done. For $R$ and $A$ non-archimedean this works similarly in \(\mathsf{Ind}(\mathsf{Ban}^{na}_R)\) with its monoidal structure.
\end{proof}

\section{Hochschild homology}\label{sec:Hochschild}

In this section, we apply the results of Section \ref{sec:Analytification_hepi} towards Hochschild homology computations. We first recall the version of Hochschild homology we need from \cite{toen2008homotopical}. Let $\mathsf{M}$ be a fixed HA-context with monoidal structure $\ootimes^{\mathbb{L}}$ and \(A\) a unital, commutative monoid in \(\mathsf{M}\). The Hochschild homology complex is defined as  
\[\mathbb{HH}(A) =A \ootimes_{A \ootimes^{\mathbb{L}} A}^{\mathbb{L}} A.
\]

We then have the following base change property:

\begin{proposition}\label{prop:HH_base-change}
Let \(A \to B\) be a homotopy epimorphism in \(\mathsf{Comm}(\mathsf{M})\). Then 
\[\mathbb{HH}(A)\ootimes_{A}^{\mathbb{L}}B \cong \mathbb{HH}(B).\]
\end{proposition}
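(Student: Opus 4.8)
\emph{Approach.} The plan is to unfold both sides into iterated derived tensor products over the ``enveloping'' commutative monoids $A^e:=A\ootimes^{\mathbb{L}}A$ and $B^e:=B\ootimes^{\mathbb{L}}B$, perform a chain of purely formal associativity manipulations, and feed in the single hypothesis $B\ootimes^{\mathbb{L}}_A B\cong B$ (the reformulation of ``homotopy epimorphism'' recorded after Definition \ref{def:homotopy_epi}) at exactly one point. Write $\mu_A\colon A^e\to A$ and $\mu_B\colon B^e\to B$ for the multiplication maps, so that $\mathbb{HH}(A)=A\ootimes^{\mathbb{L}}_{A^e}A$ and $\mathbb{HH}(B)=B\ootimes^{\mathbb{L}}_{B^e}B$, and note $f$ induces $f\ootimes f\colon A^e\to B^e$ with $f\circ\mu_A=\mu_B\circ(f\ootimes f)$. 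First I would record the two entirely formal reductions
\[
\mathbb{HH}(A)\ootimes^{\mathbb{L}}_A B=\bigl(A\ootimes^{\mathbb{L}}_{A^e}A\bigr)\ootimes^{\mathbb{L}}_A B\;\cong\;A\ootimes^{\mathbb{L}}_{A^e}\bigl(A\ootimes^{\mathbb{L}}_A B\bigr)\;\cong\;A\ootimes^{\mathbb{L}}_{A^e}B,
\]
where $B$ is an $A^e$-module through $A^e\xrightarrow{\mu_A}A\xrightarrow{f}B$, and, using $B\cong B^e\ootimes^{\mathbb{L}}_{B^e}B$,
\[
A\ootimes^{\mathbb{L}}_{A^e}B\;\cong\;A\ootimes^{\mathbb{L}}_{A^e}\bigl(B^e\ootimes^{\mathbb{L}}_{B^e}B\bigr)\;\cong\;\bigl(A\ootimes^{\mathbb{L}}_{A^e}B^e\bigr)\ootimes^{\mathbb{L}}_{B^e}B,
\]
the $A^e$-structure on $B$ obtained by restricting $\mu_B$ along $f\ootimes f$ coinciding with the one above because $f\circ\mu_A=\mu_B\circ(f\ootimes f)$. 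Thus the whole statement collapses to the single identity $A\ootimes^{\mathbb{L}}_{A^e}B^e\cong B$, whence $\mathbb{HH}(A)\ootimes^{\mathbb{L}}_A B\cong\bigl(A\ootimes^{\mathbb{L}}_{A^e}B^e\bigr)\ootimes^{\mathbb{L}}_{B^e}B\cong B\ootimes^{\mathbb{L}}_{B^e}B=\mathbb{HH}(B)$.

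\emph{The key identity.} To prove $A\ootimes^{\mathbb{L}}_{A^e}B^e\cong B$ I would base-change each of the two tensor factors of $A^e$ along $f$, writing $B^e\cong B\ootimes^{\mathbb{L}}_A A^e\ootimes^{\mathbb{L}}_A B$ with $A^e$ sitting as the ``middle'' factor and $f\ootimes f$ identified with the $A^e$-action through that factor, and then slide $A\ootimes^{\mathbb{L}}_{A^e}(-)$ past the outer copies of $B$:
\[
A\ootimes^{\mathbb{L}}_{A^e}B^e\;\cong\;B\ootimes^{\mathbb{L}}_A\bigl(A^e\ootimes^{\mathbb{L}}_{A^e}A\bigr)\ootimes^{\mathbb{L}}_A B\;\cong\;B\ootimes^{\mathbb{L}}_A A\ootimes^{\mathbb{L}}_A B\;\cong\;B\ootimes^{\mathbb{L}}_A B\;\cong\;B,
\]
the last step being exactly the hypothesis. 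Geometrically this is just the fact that pulling $Y\times Y\to X\times X$ back along the diagonal $X\hookrightarrow X\times X$ recovers $Y\times_X Y$, here equal to $Y$ since $\Spec B\to\Spec A$ is a homotopy monomorphism. Alternatively one can package this together with the first-paragraph reductions as the single assertion that the commuting square with top edge $\mu_A\colon A^e\to A$, left edge $f\ootimes f\colon A^e\to B^e$, right edge $f\colon A\to B$ and bottom edge $\mu_B\colon B^e\to B$ is a homotopy pushout in $\mathsf{Comm}(\mathsf{M})$ — which again unwinds to $B^e\ootimes^{\mathbb{L}}_{A^e}A\cong B$. I would present whichever of the two formats reads more cleanly.

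\emph{Main obstacle.} The only genuine mathematical input is the hypothesis; the sole place an argument of this shape can go wrong is the bookkeeping of module structures in each associativity step — one must check that the $A^e$-action used to form $A\ootimes^{\mathbb{L}}_{A^e}(-)$ commutes with the two ``outer'' $A$-actions by which $B\ootimes^{\mathbb{L}}_A(-)\ootimes^{\mathbb{L}}_A B$ is built out of the middle copy of $A^e$, and that restricting $\mu_B$ along $f\ootimes f$ really gives the $A^e$-module structure appearing on the $\mathbb{HH}(A)$ side. All of these commutativities hold because every module structure in sight is the restriction of an algebra map out of the commutative monoid $A^e$ (or $B^e$), so they are compatible; in the monoidal model / $\infty$-categorical setting of $\mathsf{M}$ this is the standard base-change calculus for derived tensor products over commutative monoids, and I do not anticipate any obstacle beyond being scrupulous with it.
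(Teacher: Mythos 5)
Your proof is correct and follows essentially the same strategy as the paper: unfold $\mathbb{HH}$ as a derived tensor product over the enveloping monoid and reduce to an associativity computation that uses the hypothesis $B\ootimes^{\mathbb{L}}_A B\cong B$ exactly once. The only organizational difference is that the paper routes the reduction through Proposition~\ref{prop:tensor_isocohomological} (that $A\ootimes^{\mathbb{L}}A\to B\ootimes^{\mathbb{L}}B$ is again a homotopy epimorphism) together with Lemma~\ref{lem:base_change_1}, whereas you replace that auxiliary fact with the direct computation $A\ootimes^{\mathbb{L}}_{A\ootimes^{\mathbb{L}}A}(B\ootimes^{\mathbb{L}}B)\cong B$, i.e.\ that the evident square of commutative monoids is a homotopy pushout; both amount to the same associativity juggling around the intermediate object $B\ootimes^{\mathbb{L}}_{A\ootimes^{\mathbb{L}}A}B$.
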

\begin{proof}
Since \(A \to B\) is a homotopy epimorphism, by Proposition \ref{prop:tensor_isocohomological}, so is \(A \ootimes^{\mathbb{L}} A \to B \ootimes^{\mathbb{L}} B\). So by Lemma \ref{lem:base_change_1} and Proposition \ref{prop:tensor_isocohomological}, we have the following string of weak equivalences:
\begin{equation}\begin{split}
\mathbb{HH}(A)\ootimes_{A}^{\mathbb{L}}B & \cong \left(A \ootimes_{A \ootimes^{\mathbb{L}} A}^{\mathbb{L}} A \right) \ootimes_{\left(A \ootimes_{A}^{\mathbb{L}} A\right)}^{\mathbb{L}} \left(B \ootimes_{B}^{\mathbb{L}} B \right)  \\
& \cong \left(A \ootimes_{A \ootimes^{\mathbb{L}} A}^{\mathbb{L}} A \right) 
\ootimes_{\left(A \ootimes_{A}^{\mathbb{L}} A\right)}^{\mathbb{L}} \left(B \ootimes_{A}^{\mathbb{L}} B \right)  \\
& \cong  \left(A \ootimes_{ A}^{\mathbb{L}} B \right) \ootimes_{\left((A \ootimes^{\mathbb{L}} A) \ootimes_{A}^{\mathbb{L}} A\right)}^{\mathbb{L}} \left(A \ootimes_{A}^{\mathbb{L}} B \right)  \\
& \cong  B \ootimes_{A \ootimes^{\mathbb{L}} A}^{\mathbb{L}} B  \cong   B \ootimes_{B \ootimes^{\mathbb{L}} B}^{\mathbb{L}} B  \cong \mathbb{HH}(B),
\end{split}
\end{equation}
which completes the proof.
\end{proof}
\begin{remark} \label{rem:CotTV}In the same level of generality, To\"{e}n and Vezzosi demonstrated that if $\mathsf{M}$ is an HA context and \(A \to B\) be a homotopy epimorphism in \(\mathsf{Comm}(\mathsf{M})\) that $\mathbb{L}_{B} \cong \mathbb{L}_{A} \ootimes_{A}^{\mathbb{L}} B$.
\end{remark}
Geometrically, this can be interpreted as follows. Let \(X = \Spec(A)\) be a finite type derived affine scheme over a base ring \(R\) and suppose we have chosen a Banach ring structure on $R$. Then the geometrization of Proposition \ref{prop:HH_base-change} says that the (derived) fibered product \(X^{\mathrm{an}} \times_{X^{\mathrm{an}} \times X^{\mathrm{an}}}^{h} X^{\mathrm{an}}\) is equivalent to the base change with \(X^{\mathrm{an}}\) of the (derived) fibered product \(X \times_{X \times X}^{h} X\). Equivalently, the (derived)  \textit{loop space} \(\mathcal{L}(X^{\mathrm{an}})\) of \(X^{\mathrm{an}}\) is the base change with \(X^{\mathrm{an}}\) of the (derived) loop space \(\mathcal{L}(X)\).
Further properties such as transversality with respect to a composable pair of morphisms is recorded in \cite{toen2008homotopical}*{Proposition 1.2.2.2}. 

\begin{remark}
Following \cite{toen2008homotopical} again, one can define a relative version of (topological) Hochschild homology. We mention this for the sake of completeness. Given a morphism $A \to B$ of commutative monoids, $\mathbb{HH}(B/A):=A \ootimes^{\mathbb{L}}_{\mathbb{HH}(A)}\mathbb{HH}(B)$. If $B \to C$ are homotopy epimorphism, then for an arbitrary morphism $A \to B$, we have by Proposition \ref{prop:HH_base-change} that $\mathbb{HH}(B/A)\ootimes_{B}^{\mathbb{L}}C \cong \mathbb{HH}(C/A)$.
\end{remark}

The most basic monoidal model categories that we apply this to are the category of simplicial ind-Banach modules over $\mathbb{Z}$ together with $\wotimes^{\mathbb{L}}_{\mathbb{Z}}$ or non-archimedean  simplicial ind-Banach modules over $\mathbb{Z}_{triv}$ together with $\wotimes^{na \mathbb{L}}_{\mathbb{Z}_{triv}}$. But $\mathbb{Z}$ could be replaced by any simplicial ind-Banach ring and  $\mathbb{Z}_{triv}$ could be replaced by any non-archimedean simplicial ind-Banach ring provided we take the correct category of modules. These are HA contexts by work in \cite{Ben-Bassat-Kelly-Kremnizer:Perspective}. The more classical HA context is based on the category of simplicial abelian groups with the projective model structure. In the next subsection, we investigate some consequences of $\mathbb{HH}(A)\wotimes_{A}^{\mathbb{L}}B \cong \mathbb{HH}(B)$ working relative to the quasi-abelian category of Ind-Banach modules over $R$ and  $\mathbb{HH}(A)\wotimes_{A}^{na \mathbb{L}}B \cong \mathbb{HH}(B)$ when $R,A,B$ are non-archimedean.

\subsection{Hochschild homology computations in analytic geometry}

We now apply Proposition \ref{prop:HH_base-change} towards Hochschild homology computations in analytic geometry. In this subsection we show that the analytification homomorphisms \(A \to A^{\mathsf{an}}\) introduced previously in \ref{subsec:analytification_functor}, can be used to reduce Hochschild homology computations from \(A^{\mathsf{an}}\) to those of \(A\). In practice, the Hochschild homology of \(A\) relative to $R$ is simple to compute. We also discuss the Hochschild homology of localizations. We do not explicitly spell out the non-archimedean aspects of in this section. If the ring objects in any statement are non-arechimedean, the monoidal structure can be replaced with the non-archimidean one.

Let \(R\) be a Banach ring, \(r = (r_1,\dots,r_n)\) a poly-radius, and let \[A = S_R(R_{r_1} \oplus \cdots \oplus R_{r_n}) \cong S_R(R^n) \cong R[x_1,\dots, x_n].\]
This is carries a structure as an algebra object in \(\mathsf{Ind}(\mathsf{Ban}_R)\), as explained in Section \ref{subsec:symmetric_algebra}. When \(R = k\) is a non-trivially valued Banach field then the \(k\)-algebra with the fine bornology is a complete, bornological \(k\)-algebra.  We have discussed four types of $n$-dimensional analytic rings in Section \ref{subsec:analytification_functor}, namely:

\begin{enumerate}
\item The Tate-algebra \(R\gen{\frac{x_1}{r_1},\dots,\frac{x_n}{r_n}}\) when $R$ is non-archimedean and $0<r_i< \infty$;
\item The dagger algebra \(R\{\frac{x_1}{r_1},\dots,\frac{x_n}{r_n}\}^\dagger\) where $0\leq r_i< \infty$
\item The Stein algebra $\mathcal{O}_{R}(D^{n}_{\bold{r}})$ where $0 < r_i \leq \infty$
\item The formal power series $R[[x_1, \dots, x_n]]$.
\end{enumerate}

We denote a choice of one of these analytifications by \(A^\mathsf{an}\). More generally, one could use a hybrid  $n$-dimensional  analytic ring as in Remark \ref{rem:hybrid}. The disk algebra  \(R\{\frac{x_1}{r_1},\dots,\frac{x_n}{r_n}\}\) can also be considered an analytification but we do not use it here because we did not prove that $R[x_1,\dots, x_n] \to R\{\frac{x_1}{r_1},\dots,\frac{x_n}{r_n}\}$ is a homotopy epimorphism. By Theorem \ref{thm:polynomial_dagger_isocohomological2}, each of the above four analytifications $A \to A^\mathsf{an}$ are homotopy epimorphisms. Now let \(I=(a_1,\dots,a_m)\) be an ideal of \(A\).

\begin{theorem}\label{thm:HH_analytified} For any simplical algebra object in \(\mathsf{Ind}(\mathsf{Ban}_R)\),
we have a quasi-isomorphism \[\mathbb{HH}(A^{\mathsf{an}}\sslash (I\cdot A^{\mathsf{an}})) \cong \left( A^{\mathsf{an}} \sslash (I\cdot A^{\mathsf{an}} ) \right)\wotimes_{A \sslash I}^{\mathbb{L}} \mathbb{HH}(A \sslash I)\] For $R$ and $A$ non-archimedean this works similarly in \(\mathsf{Ind}(\mathsf{Ban}^{na}_R)\) with its monoidal structure.
\end{theorem}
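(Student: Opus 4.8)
The plan is to reduce the statement to two ingredients already available: the base-change formula for Hochschild homology along a homotopy epimorphism (Proposition \ref{prop:HH_base-change}), and the fact that derived quotients interact well with homotopy epimorphisms (Proposition \ref{prop:special_quotients}). First I would record that the analytification map $A \to A^{\mathsf{an}}$ is a homotopy epimorphism of commutative monoids in $\mathsf{Ind}(\mathsf{Ban}_R)$; this is exactly Theorem \ref{thm:polynomial_dagger_isocohomological2} applied to whichever of the four analytifications (Tate, dagger, Stein, formal) has been fixed, and it passes to the $0$-connective simplicial (or dg) setting as remarked before Definition \ref{def:derived_isocohomological}. More generally, if $A$ is an arbitrary simplicial algebra object receiving a map from $R[x_1,\dots,x_n]$ one works with $A \to A \wotimes^{\mathbb{L}}_{R[x_1,\dots,x_n]} A^{\mathsf{an}}_{\mathrm{poly}}$, which is a homotopy epimorphism by Proposition \ref{prop:derived_base_change}; I will phrase the argument so that it covers this case uniformly.

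Next I would apply Proposition \ref{prop:special_quotients} with the ideal $I = (a_1,\dots,a_m)$: since $A \to A^{\mathsf{an}}$ is a homotopy epimorphism, the induced map on derived quotients
\[
A \sslash I \longrightarrow A^{\mathsf{an}} \sslash (I \cdot A^{\mathsf{an}})
\]
is again a homotopy epimorphism. (Here $I \cdot A^{\mathsf{an}}$ means the ideal generated by the images $f(a_1),\dots,f(a_m)$, and the notation $A^{\mathsf{an}} \sslash (I\cdot A^{\mathsf{an}})$ is $A^{\mathsf{an}} \sslash (f(a_1),\dots,f(a_m))$ in the sense of Definition following Proposition \ref{prop:special_quotients}; the computation in that proof, $(A\sslash a)\wotimes^{\mathbb{L}}_A B \cong B \sslash f(a)$, also identifies the target as $(A\sslash I)\wotimes^{\mathbb{L}}_{A} A^{\mathsf{an}}$.) Then Proposition \ref{prop:HH_base-change}, applied to this homotopy epimorphism $A \sslash I \to A^{\mathsf{an}} \sslash (I\cdot A^{\mathsf{an}})$, immediately yields
\[
\mathbb{HH}\bigl(A^{\mathsf{an}} \sslash (I\cdot A^{\mathsf{an}})\bigr) \;\cong\; \mathbb{HH}(A\sslash I) \ootimes^{\mathbb{L}}_{A\sslash I} \bigl(A^{\mathsf{an}} \sslash (I\cdot A^{\mathsf{an}})\bigr),
\]
which is the assertion, with $\ootimes^{\mathbb{L}}$ the monoidal structure $\wotimes^{\mathbb{L}}_R$ of $\mathsf{sInd}(\mathsf{Ban}_R)$. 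The non-archimedean case is identical word for word, replacing $\wotimes_R$ by $\wotimes^{na}_R$ throughout, since all the cited results (Theorem \ref{thm:polynomial_dagger_isocohomological2}, Propositions \ref{prop:special_quotients}, \ref{prop:HH_base-change}) are stated in that generality.

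The only point requiring a little care — and the closest thing to an obstacle — is checking that the hypotheses of Proposition \ref{prop:special_quotients} are literally met: that statement is phrased for $a_1,\dots,a_n$ degree-zero elements of $A$ and a homotopy epimorphism $A \to B$, so I must make sure the $a_i$ are taken in (connective degree zero of) $A$ and that $A^{\mathsf{an}}\sslash(I\cdot A^{\mathsf{an}})$ is formed using the images $f(a_i)$, which is automatic. One should also note, for the general simplicial $A$, that ``$A^{\mathsf{an}}$'' must be interpreted as the relative analytification $A \wotimes^{\mathbb{L}}_{R[x_1,\dots,x_n]} (\text{analytification of } R[x_1,\dots,x_n])$, so that flatness of the analytic polydisc algebras over $R$ (recorded after Equation \ref{eqn:decomStein}) guarantees the derived tensor computes correctly and the homotopy-epimorphism property is inherited via Proposition \ref{prop:derived_base_change}. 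Beyond that bookkeeping the proof is a two-line concatenation of the quoted propositions, so no genuinely new estimate or construction is needed.
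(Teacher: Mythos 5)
Your proof is correct and is essentially the same as the paper's: the paper also notes (just before the theorem statement, citing Theorem \ref{thm:polynomial_dagger_isocohomological2}) that $A \to A^{\mathsf{an}}$ is a homotopy epimorphism, then applies Proposition \ref{prop:special_quotients} to conclude that $A \sslash I \to A^{\mathsf{an}} \sslash (I\cdot A^{\mathsf{an}})$ is a homotopy epimorphism, and finishes with Proposition \ref{prop:HH_base-change}. Your additional bookkeeping about the degree-zero condition, the identification $(A\sslash I)\wotimes^{\mathbb{L}}_{A}A^{\mathsf{an}} \cong A^{\mathsf{an}}\sslash(I\cdot A^{\mathsf{an}})$, and the relative analytification is harmless elaboration on the same two-step argument.
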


\begin{proof}
 By Proposition \ref{prop:special_quotients}, the morphism \[A \sslash I \to A^{\mathsf{an}} \sslash (I\cdot A^{\mathsf{an}})\] is a homotopy epimorphism in the monoidal model category \(\mathsf{Ch}(\mathsf{Ind}(\mathsf{Ban}_R))\). The theorem then follows from Proposition \ref{prop:HH_base-change}.
\end{proof}

Let us look at some consequences of this result. 

\begin{example}\label{ex:abstract_HH_base-change}
Let \(A=P_n/J\) be a complete intersection ring, viewed as an object of \(\mathsf{Ind}(\mathsf{Ban}_R)\), where \(P_n = R[x_1,\dots,x_n]\). This means that the ideal \(J\) is generated by a strict regular sequence for the fine bornology. Consequently, consider the derived quotient $A$ as the degree-zero object \(P_n \sslash J \cong P_n/J \). Let $P_n^{\mathsf{an}}$ be one of the analytifications of $P_n$ from section \ref{subsec:analytification_functor}. We then have, using Koszul resolutions,  \[P_n^{\mathsf{an}} \sslash (J\cdot P_n^{\mathsf{an}}) \cong (P_n/J) \wotimes_{P_n}^{\mathbb{L}} P_n^{\mathrm{an}} \cong K(P_n,J) \wotimes_{P_n} P_n^{\mathrm{an}} \cong K(P_n^{\mathrm{an}},J).\]  Therefore, 
\[\mathbb{HH}(P_n^{\mathsf{an}} \sslash (J\cdot P_n^{\mathsf{an}})) \cong K(P_n^{\mathrm{an}},J) \wotimes_{A}^{\mathbb{L}} \mathbb{HH}(A).\]
The Hochschild homology complex of a complete intersection ring in the algebraic case (over any commutative ground ring \(R\)) has been computed in \cite{guccione-guccione}. Since the fine bornology functor is strictly exact, the same complex, now viewed as an object in \(\mathsf{Ind}(\mathsf{Ban}_R)\) computes \(\mathbb{HH}(A)\) in \(\mathsf{Ind}(\mathsf{Ban}_R)\). Notice that we do not obviously have that $ K(P_n^{\mathrm{an}},J)$ is strict exact so this issue requires more analysis.
\end{example}
We have described in \ref{thm:polynomial_dagger_isocohomological2} and \ref{rem:hybrid} many types of $n$-dimensional analytic rings. If we choose one of them for each $n$, $\mathcal{C}=\{C_{n}\}_{n\in \mathbb{N}}$ that gives an analytification functor 
\[F_{\mathcal{C}}: SCR_R \to \mathsf{sComm}(\mathsf{Ind}(\mathsf{Ban}_R))
\] described in subsection \ref{subsec:analytification_functor}.  In particular, we have the one given by taking the fine structure on each polynomial algebra, call it $F_{\text{fine}}$. Suppose we have a collection of morphisms of analytic rings $\{f_n: C_{n} \to C'_{n}\}_{n\in \mathbb{N}}$. This induces a natural transformation $F_{\mathcal{C}} \longrightarrow F_{\mathcal{C}'}$ of functors. 
\begin{lemma}\label{lemma:full}Suppose we take a collection  $\{f_n: C_{n} \to C'_{n}\}_{n\in \mathbb{N}}$ of homotopy epimorphisms of analytic rings over a Banach ring $R$. Then the induced natural transformation $F_{\mathcal{C}} \longrightarrow F_{\mathcal{C}'}$ of functors $SCR_R \to \mathsf{sComm}(\mathsf{Ind}(\mathsf{Ban}_R))$ satisfies that for every simplicial commutative ring $A$ that the map $F_{\mathcal{C}}(A) \longrightarrow F_{\mathcal{C}'}(A)$ is a homotopy epimorphism.
\end{lemma}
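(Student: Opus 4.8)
The plan is to reduce the statement to the case of finitely generated polynomial algebras, where it is exactly the hypothesis that each \(f_n\colon C_n\to C_n'\) is a homotopy epimorphism, and then to propagate it along the sifted homotopy colimits out of which every object of \(SCR_R\) is built. Recall from Subsection~\ref{subsec:analytification_functor} that \(F_{\mathcal C}\) and \(F_{\mathcal C'}\) are, by construction, the (essentially unique) sifted-homotopy-colimit-preserving functors extending their restrictions to \(\mathsf{Poly}_R\) along \(\mathsf{Poly}_R\hookrightarrow SCR_R\); on generators they are given by \(F_{\mathcal C}(R[x_1,\dots,x_n])=C_n\) and \(F_{\mathcal C'}(R[x_1,\dots,x_n])=C_n'\), and the induced natural transformation \(\eta\colon F_{\mathcal C}\Rightarrow F_{\mathcal C'}\) restricts on \(\mathsf{Poly}_R\) to the family \(\{f_n\}\).

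Next I would introduce two functors \(G,H\colon SCR_R\to \mathsf{sComm}(\mathsf{Ind}(\mathsf{Ban}_R))\), namely \(G(A)=F_{\mathcal C'}(A)\wotimes^{\mathbb L}_{F_{\mathcal C}(A)}F_{\mathcal C'}(A)\) and \(H(A)=F_{\mathcal C'}(A)\), together with the natural transformation \(\mu\colon G\Rightarrow H\) given by the codiagonal (multiplication) map. Using that \(B\wotimes^{\mathbb L}_A B\) computes the homotopy pushout of \(B\leftarrow A\to B\) in \(\mathsf{sComm}(\mathsf{Ind}(\mathsf{Ban}_R))\) (as recalled at the start of Section~\ref{sec:homotopy_epimorphisms}) together with Definition~\ref{def:derived_isocohomological}, the lemma is equivalent to the assertion that \(\mu_A\) is a weak equivalence for every \(A\in SCR_R\).

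Now the key observations are: \(G\) preserves sifted homotopy colimits, since it factors as \(A\mapsto\bigl(F_{\mathcal C}(A)\to F_{\mathcal C'}(A)\bigr)\) — which preserves sifted colimits because both \(F_{\mathcal C}\) and \(F_{\mathcal C'}\) do and colimits in the arrow category are computed pointwise — followed by the ``relative tensor square'' functor \((A'\to B')\mapsto B'\wotimes^{\mathbb L}_{A'}B'\), which, being a homotopy pushout (a composite of a precomposition functor with a colimit functor), preserves all homotopy colimits; and \(H=F_{\mathcal C'}\) preserves sifted homotopy colimits as well. Hence \(\mu\colon G\Rightarrow H\) is a natural transformation between two sifted-homotopy-colimit-preserving functors out of \(SCR_R\), so by the universal property quoted in Subsection~\ref{subsec:analytification_functor} (the equivalence \(\mathrm{Funct}_{sft}(SCR_R,\mathcal S)\xrightarrow{\sim}\mathrm{Funct}(\mathsf{Poly}_R,\mathcal S)\)) it is a weak equivalence on all of \(SCR_R\) as soon as its restriction to \(\mathsf{Poly}_R\) is one. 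But for \(A=R[x_1,\dots,x_n]\) the map \(\mu_A\) is the canonical \(C_n'\wotimes^{\mathbb L}_{C_n}C_n'\to C_n'\), which is a weak equivalence precisely because \(f_n\) is a homotopy epimorphism. Therefore \(\mu_A\) is a weak equivalence for every \(A\), i.e. \(F_{\mathcal C}(A)\to F_{\mathcal C'}(A)\) is a homotopy epimorphism. The non-archimedean case is identical, with \(\wotimes^{na\,\mathbb L}\) in place of \(\wotimes^{\mathbb L}\).

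I expect the only genuinely delicate point to be bookkeeping rather than a substantive obstacle: one must set up \(G\) and \(H\) as honest functors of \(\infty\)-categories (or via their Quillen-derived presentations in \(\mathsf{sComm}(\mathsf{Ind}(\mathsf{Ban}_R))\)) so that the composite structure of \(G\) and the naturality of \(\mu\) are homotopy-coherent — in particular that forming the arrow \(F_{\mathcal C}(A)\to F_{\mathcal C'}(A)\) and then passing to its relative tensor square is colimit-compatible in the derived sense. Once that functoriality is in place the argument is just the standard ``a natural transformation of sifted-colimit-preserving functors is an equivalence iff it is so on the generators'' principle. A lower-tech alternative would be to choose an explicit simplicial resolution \(A\simeq|P_\bullet|\) by polynomial algebras, apply \(F_{\mathcal C}\) and \(F_{\mathcal C'}\) termwise, and commute the resulting homotopy colimits past \(\wotimes^{\mathbb L}\) by hand; this avoids the abstract functoriality at the cost of a messier computation.
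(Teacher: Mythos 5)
Your proposal is correct and follows essentially the same route as the paper: write \(A\) as a sifted homotopy colimit of polynomial algebras, use that both \(F_{\mathcal C}\) and \(F_{\mathcal C'}\) preserve sifted homotopy colimits, and propagate the homotopy epimorphism property from the generators. The one place you are more careful than the paper is worth flagging: the paper simply says ``the map between these colimits is a colimit of homotopy epimorphisms and so a homotopy epimorphism,'' which tacitly uses that a \emph{sifted} homotopy colimit of homotopy epimorphisms is again one — a fact that (unlike the filtered case of Proposition~\ref{prop:filtered_colimit_isocohomological}) rests on the commutation of sifted colimits with the relative tensor square via the diagonal. Your packaging of this as the natural transformation \(\mu\colon G\Rightarrow H\) between the two sifted-colimit-preserving functors \(G(A)=F_{\mathcal C'}(A)\wotimes^{\mathbb L}_{F_{\mathcal C}(A)}F_{\mathcal C'}(A)\) and \(H(A)=F_{\mathcal C'}(A)\), and then invoking the universal property \(\mathrm{Funct}_{sft}(SCR_R,\mathcal S)\simeq\mathrm{Funct}(\mathsf{Poly}_R,\mathcal S)\), makes this step explicit and is a cleaner way to say the same thing.
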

\begin{proof}If we write $A$ as a homotopy sifted colimit of polynomial algebras then the analytification functors send $A$ respectively to the homotopy sifted colimit of the same diagram of $C_n$ and to the homotopy sifted colimit of the same diagram of $C'_n$. The map between these colimits is a colimit of homotopy epimorphisms between the terms of the diagram and so a homotopy epimorphism.
\end{proof}
\begin{remark}The fact that homotopy pushouts of homotopy epimorphisms are homotopy epimorphisms was discussed already in Proposition \ref{prop:tensor_isocohomological}.
\end{remark}
\begin{corollary}The case that most closely resembles standard notions of analytification such as appear in GAGA types theorems are the case of open disks with infinite multi-radii where $C_n=\mathcal{O}_{R}(\mathbb{A}^{n}_{R})$ for $A\in SCR_R$. We have homotopy epimorphisms $F_{\text{fine}}(A) \to F_{\mathcal{C}}(A)$ in this case. In general, the map from the fine version of a simplical commutative ring, to any other of its analytifications is a homotopy epimorphism. 
\end{corollary}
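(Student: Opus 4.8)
The plan is to reduce the statement to Lemma~\ref{lemma:full} together with Theorem~\ref{thm:polynomial_dagger_isocohomological2}, so that essentially no new work is required. The first step is to recognise the fine analytification functor $F_{\text{fine}}$ as the functor $F_{\mathcal{C}_0}$ attached to the tautological choice $\mathcal{C}_0 = \{C_n^0\}_{n \in \N}$ with $C_n^0 = R[x_1,\dots,x_n]$ carrying the fine ind-Banach structure of Section~\ref{subsec:symmetric_algebra}: the full subcategory of generators of the target then literally is the image of $\mathsf{Poly}_R$, and the equivalence $\text{Funct}_{sft}(SCR_R,\mathcal{S}) \simeq \text{Funct}(\mathsf{Poly}_R,\mathcal{S})$ recalled in Section~\ref{subsec:analytification_functor} identifies $F_{\text{fine}}$ with the sifted-colimit extension of $\mathsf{Poly}_R \hookrightarrow \mathsf{Comm}(\mathsf{sInd}(\mathsf{Ban}_R))$. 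With this identification, the canonical maps $f_n \colon R[x_1,\dots,x_n] \to C_n$ assemble into a collection of morphisms of analytic rings inducing precisely the natural transformation $F_{\text{fine}} \to F_{\mathcal{C}}$ in the sense of the paragraph preceding Lemma~\ref{lemma:full}.

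The second step is to verify that each $f_n$ is a homotopy epimorphism. If $C_n$ is one of the four standard $n$-dimensional analytic rings --- the formal power series ring $R[[x_1,\dots,x_n]]$, a dagger algebra, a Stein algebra $\mathcal{O}_R(D^n_{\mathbf r})$, or, in the non-archimedean case, a Tate algebra --- this is exactly Theorem~\ref{thm:polynomial_dagger_isocohomological2}. If $C_n$ is a hybrid analytic ring obtained by $\wotimes_R$-tensoring one-dimensional such objects, then $f_n$ is still a homotopy epimorphism by Remark~\ref{rem:hybrid}, which itself rests on Proposition~\ref{prop:tensor_isocohomological_disc} (applied to the factorisation of $f_n$ into its one-dimensional constituents, all of which are flat over $R$).

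The third step is simply to apply Lemma~\ref{lemma:full} to the collection $\{f_n\}_{n \in \N}$: it yields that for every $A \in SCR_R$ the induced map $F_{\text{fine}}(A) \to F_{\mathcal{C}}(A)$ is a homotopy epimorphism in $\mathsf{sComm}(\mathsf{Ind}(\mathsf{Ban}_R))$. Specialising to $C_n = \mathcal{O}_R(\mathbb{A}^n_R) = \mathcal{O}_R(D^n_{(\infty,\dots,\infty)})$ gives the GAGA-type statement for the Stein analytification with infinite poly-radii. When $R$ is non-archimedean the same argument runs verbatim in $\mathsf{Ind}(\mathsf{Ban}^{na}_R)$ with the monoidal structure $\wotimes^{na}_R$, using the non-archimedean halves of the cited results.

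I do not expect any genuine obstacle, since the preceding sections have been arranged so that this corollary is a bookkeeping consequence of them. The only point that deserves a sentence of care is the first step: one must check that the natural transformation named in the statement is indeed the one produced from the $f_n$ by the universal property of the sifted-colimit completion, and not some a priori different map. Once $F_{\text{fine}}$ is identified with $F_{\mathcal{C}_0}$ this is automatic, because both functors, and the transformation between them, are determined by their restrictions to $\mathsf{Poly}_R$.
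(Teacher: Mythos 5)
Your proposal is correct and is essentially the argument the paper intends (the paper gives the corollary without a written proof, since it is an immediate consequence of Lemma~\ref{lemma:full} together with Theorem~\ref{thm:polynomial_dagger_isocohomological2} and Remark~\ref{rem:hybrid}). The one small point you make that is worth stating explicitly --- that $F_{\text{fine}}$ is recognised as $F_{\mathcal{C}_0}$ for the tautological choice $C_n^0 = R[x_1,\dots,x_n]$, so the natural transformation is uniquely determined by its restriction to $\mathsf{Poly}_R$ --- is exactly what the paper leaves implicit.
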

\begin{example}
Suppose \(A\) is an object in \(\mathsf{Comm}(Mod(R)) \subset SCR_R \), where \(R\) is a commutative ring of characteristic zero. Given a Banach structure on $R$ we can consider a choice of analytification of the all the polynomial algebras $P_n$ over $R$ as in Section \ref{subsec:analytification_functor}. Since the assignment \(P_n \to P_n^{\mathrm{an}}\) is a homotopy epimorphism, where \(P_\bullet \to A\) is a simplicial resolution of \(A\) by finite type polynomial algebras, the same holds by Lemma \ref{lemma:full} when we pass to $A \to A^{an}$ where $A^{an}$ is defined by \( P_\bullet^{\mathrm{an}}\). Now by an analytic version of the Hochschild-Kostant-Rosenberg Theorem proven in \cite{kkm-analytic_KHR}, together with the fact (see Remark \ref{rem:CotTV} )that $\mathbb{L}_A \wotimes_{A}^{\mathbb{L}}A^{\mathrm{an}} \cong \mathbb{L}_A^{\mathrm{an}}$ we have the following very general computation
\begin{equation}\begin{split}
\mathbb{HH}(A^{\mathrm{an}}) & \cong Sym_{A^{an}}(\mathbb{L}_{A^{\mathrm{an}}}[1]) \cong Sym_{A^{\mathrm{an}}}(A^{\mathrm{an}} \wotimes_A^{\mathbb{L}} \mathbb{L}_A[1]) \\ & \cong A^{\mathrm{an}}  \wotimes_A^{\mathbb{L}} Sym_A(\mathbb{L}_A[1])  \cong A^{\mathrm{an}}  \wotimes_A^{\mathbb{L}} \mathbb{HH}(A),
\end{split}
\end{equation}
where \(\mathbb{L}\) is the cotangent complex and it can easily be shown that the HKR isomorphisms produce the same base change isomorphism on Hochschild homology which have discussed.
\end{example}

\begin{example} Let $R$ be a Banach ring and $A$ a simplical algebra object in \(\mathsf{Ind}(\mathsf{Ban}_R)\). By Lemmas \ref{lem:WS}, \ref{lem:LauLoc} and \ref{lem:RatLoc} combined with Proposition \ref{prop:HH_base-change} we know that for any type of localization (Weierstrass, Laurent, or rational) $LA$ of $A$ that  \[\mathbb{HH}(A)\wotimes_{A}^{\mathbb{L}}LA \cong \mathbb{HH}(LA),\] similarly in the non-archimedean context.
If we iterate this or use the analytifications \(R[x_1,\dots,x_n] \to R[[x_1,\dots,x_n]]\), with $R$ trivially valued, we can recover results of Hubl \cite{HublBook}. For example, similarly to  
\ref{def:DerAdCom}, considering a derived algebraic adic (Weierstrass) localization we have a homotopy epimorphism $A \to A_{\hat{a}}$ for any simplicial algebra $A$ over $R$ given the fine Ind-Ban structure and an element $a\in A$. Therefore $ \mathbb{HH}(A_{\hat{a}}) \cong  \mathbb{HH}(A)\wotimes^{\mathbb{L}}_{A}A_{\hat{a}}$. When $\mathbb{HH}(A)$ is perfect over $A$, this gives $ \mathbb{HH}(A_{\hat{a}}) \cong  \mathbb{HH}(A)\otimes_{A}A_{\hat{a}}$. For $A$ a finitely generated commutative algebra over a field of characteristic zero and $J$ an ideal of $A$ and $\overline{P_J}$ denotes the $J$-adic completion then again using the derived algebraic adic (Weierstrass) localization we recover the results $HH_{*}(\overline{A_J}) \cong \overline{A_J}\otimes_{A} HH_{*}(A)$ of Proposition 4.2.1 of \cite{Cortinas-Cuntz-Meyer-Tamme:Nonarchimedean}.
\end{example}

\begin{example}\label{ex:bornological_HH_base-change}
For a more concrete example, we can use the alternative approach of Proposition \ref{prop:fine_implies_bornological}. This works over a non-trivially valued Banach field \(k\). Let \(A\) and \(B\) be as in Proposition \ref{prop:fine_implies_bornological}, so that \(A \to B\) is a homotopy epimorphism in \(\mathsf{Comm}(\mathsf{CBorn}_k)\). Viewing this as a homotopy epimorphism in the simplicial model category \(\mathsf{Comm}(\mathsf{sCBorn}_k)\), we use Proposition \ref{prop:HH_base-change} to directly obtain a quasi-isomorphism  
\[\mathbb{HH}(B) \cong B \wotimes^{\mathbb{L}}_A \mathbb{HH}(A).\]
\end{example}
\noindent
Example \ref{ex:bornological_HH_base-change} and Example \ref{ex:overconvergent_isocohomological} allows us to recover the following result in \cite{Cortinas-Cuntz-Meyer-Tamme:Nonarchimedean}:

\begin{corollary}[\cite{Cortinas-Cuntz-Meyer-Tamme:Nonarchimedean}*{Proposition 4.17}]\label{cor:CCMT_analytification}
Let \(A\) be a finite-type, torsion-free commutative algebra over a discrete valuation ring \(\dvr\) with uniformiser \(\dvgen\), viewed as a complete bornological algebra with the fine bornology. Let \(A^\dagger\) be its dagger completion, as in Remark \ref{rem:dagger_equivalent}. We then have 
\[\mathbb{HH}(\underline{A}^\dagger) \cong \underline{A}^\dagger \otimes_{\underline{A}} \mathbb{HH}(\underline{A}),\] where \(\underline{(-)} \defeq (-) \wotimes_\dvr \dvf\), with \(\dvf = \dvr[\dvgen^{-1}]\) being the fraction field of \(\dvr\).
\end{corollary}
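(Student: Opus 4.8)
The plan is to exhibit the dagger completion map $\underline{A}\to\underline{A}^\dagger$ as a homotopy epimorphism of complete bornological $\dvf$-algebras, apply the Hochschild base-change property, and then replace the resulting derived tensor product by an ordinary one using that $\underline{A}^\dagger$ is algebraically flat over $\underline{A}$. First I would fix the setup: since $A$ is of finite type over $\dvr$, choose a presentation $A\cong\dvr[x_1,\dots,x_n]/I$; torsion-freeness means $A$ embeds into $\underline{A}=A\wotimes_\dvr\dvf$, which is a finite-type $\dvf$-algebra $\dvf[x_1,\dots,x_n]/J$, equipped with the fine bornology. Because $\dvr$ is a complete discrete valuation ring, its fraction field $\dvf$ is a non-trivially valued (non-archimedean) Banach field, so we are working inside $\mathsf{CBorn}_\dvf$, and $\underline{A}^\dagger$ is the rationalized dagger completion described in Remark~\ref{rem:dagger_equivalent}.

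Next I would verify that $f\colon\underline{A}\to\underline{A}^\dagger$ satisfies the hypotheses of Proposition~\ref{prop:fine_implies_bornological}; this is exactly the content of Example~\ref{ex:overconvergent_isocohomological} with $(\Z_p,\Q_p)$ replaced by $(\dvr,\dvf)$, and it is the part of the argument carrying the real weight. Concretely: $\underline{A}$ is a finite-type $\dvf$-algebra with the fine bornology; the map $\underline{A}\to\underline{A}^\dagger$ is flat for the \emph{algebraic} tensor product $\otimes_{\underline{A}}$ — this is \cite{Cortinas-Cuntz-Meyer-Tamme:Nonarchimedean}*{Lemma 4.14}, and algebraic flatness is preserved by the exact base change $-\otimes_\dvr\dvf$ — while $\underline{A}^\dagger\wotimes_{\underline{A}}\underline{A}^\dagger\cong\underline{A}^\dagger$ because dagger completion is idempotent (universal property of dagger algebras, \cite{Meyer-Mukherjee:Bornological_tf}*{Theorem 5.3}); finally, the bornology of $\underline{A}^\dagger$ is countably generated, via the explicit increasing family of generating submodules $S_{\lceil C\rceil}$ recalled in Example~\ref{ex:overconvergent_isocohomological}, and this countable generation is not destroyed by $-\wotimes_\dvr\dvf$. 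Proposition~\ref{prop:fine_implies_bornological} then gives that $\underline{A}\to\underline{A}^\dagger$ is a homotopy epimorphism in $\mathsf{Comm}(\mathsf{CBorn}_\dvf)$, hence also in $\mathsf{Comm}(\mathsf{sCBorn}_\dvf)$ when both sides are regarded as $0$-connective objects.

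With this in hand I would apply Proposition~\ref{prop:HH_base-change} (this is precisely Example~\ref{ex:bornological_HH_base-change}) to obtain a quasi-isomorphism $\mathbb{HH}(\underline{A}^\dagger)\cong\underline{A}^\dagger\wotimes^{\mathbb{L}}_{\underline{A}}\mathbb{HH}(\underline{A})$. To turn this into the asserted \emph{underived} statement I would note that $\mathbb{HH}(\underline{A})$, computed in $\mathsf{CBorn}_\dvf$, is represented by the ordinary Hochschild chain complex of the finite-type $\dvf$-algebra $\underline{A}$, all of whose terms $\underline{A}^{\otimes_\dvf\bullet+1}$ carry the fine bornology and are free as $\underline{A}$-modules (here the fine-bornology functor is strictly exact and monoidal, and the completed and algebraic tensor products over $\dvf$ agree by Lemma~\ref{lem:tensor_completed}). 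Since $\underline{A}^\dagger$ is algebraically flat over $\underline{A}$, the complex $\underline{A}^\dagger\wotimes^{\mathbb{L}}_{\underline{A}}\mathbb{HH}(\underline{A})$ is represented by $\underline{A}^\dagger\otimes_{\underline{A}}$ applied to this Hochschild complex — completed and algebraic tensor products again coinciding by Lemma~\ref{lem:tensor_completed} on the free terms — and taking homology, using flatness once more, yields $\mathbb{HH}(\underline{A}^\dagger)\cong\underline{A}^\dagger\otimes_{\underline{A}}\mathbb{HH}(\underline{A})$, which is the claim.

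The main obstacle is not in the formal manipulations but in the two inputs of the second step: the algebraic flatness of the dagger completion over the polynomial-quotient algebra and the countable generation of its linear-growth bornology. These are genuinely analytic facts about Monsky--Washnitzer-type algebras; however, they are already established in \cite{Cortinas-Cuntz-Meyer-Tamme:Nonarchimedean} and reproduced in Example~\ref{ex:overconvergent_isocohomological}, so for the purposes of this corollary they may be invoked directly. Everything else is a routine unwinding of Propositions~\ref{prop:fine_implies_bornological} and \ref{prop:HH_base-change} together with the fine-bornology comparison of Lemma~\ref{lem:tensor_completed}.
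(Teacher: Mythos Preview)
Your proposal is correct and follows exactly the route the paper intends: the corollary is presented as an immediate consequence of Example~\ref{ex:overconvergent_isocohomological} (verifying the hypotheses of Proposition~\ref{prop:fine_implies_bornological} for $\underline{A}\to\underline{A}^\dagger$) together with Example~\ref{ex:bornological_HH_base-change} (applying Proposition~\ref{prop:HH_base-change}). You supply more detail than the paper does, in particular the passage from the derived tensor product $\wotimes^{\mathbb{L}}_{\underline{A}}$ to the underived $\otimes_{\underline{A}}$ via algebraic flatness of $\underline{A}^\dagger$ and freeness of the bar complex terms, which the paper's statement requires but leaves implicit.
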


\begin{remark}\label{rem:rigid_cohomology}
The Hochschild homology base change result in Corollary \ref{cor:CCMT_analytification} was used in \cite{Cortinas-Cuntz-Meyer-Tamme:Nonarchimedean} to compare the \textit{periodic cyclic homology} of \(\underline{A}^\dagger\) with the \textit{rigid cohomology} of \(A/\dvgen A\) with coefficients in \(\dvf\), in the case where \(\underline{A}\) is smooth over the fraction field \(\dvf\). To see this, we first note that for the smooth algebra \(\underline{A}\), the Hochschild-Kostant-Rosenberg Theorem implies an isomorphism \[\HH_*(\underline{A}) \cong \Omega_{\underline{A}/\dvf}^*\] in homology where the right hand side is locally free of finite rank. Therefore, by Corollary \ref{cor:CCMT_analytification}, we have \(\HH_*(\underline{A}^\dagger) \cong \underline{A}^\dagger \otimes_{\underline{A}} \Omega_{\underline{A}/\dvf}^*\).  This can be promoted to an isomorphism of mixed complexes \[(\mathbb{HH}(\underline{A}^\dagger), b, B) \cong (\underline{A}^\dagger \otimes_{\underline{A}} \Omega_{\underline{A}/\dvf}^*, 0 , d),\] where \(B\) is Connes's differential. Finally, this implies an isomorphism 
\[\HP_*(\underline{A}^\dagger) \cong \bigoplus_{j \in \Z} H_{\mathrm{rig}}^{* + 2j}(A/\dvgen A, \dvf),  \ \ \ \ * = 0,1.\]  
\end{remark}

\begin{bibdiv}
  \begin{biblist}
    \bibselect{references}




  \end{biblist}
\end{bibdiv}

\end{document}